\definecolor{darkred}{RGB}{100,0,0}
\definecolor{darkgreen}{RGB}{0,100,0}
\definecolor{darkblue}{RGB}{0,0,150}
\newtheorem{thm}{Theorem}
\newtheorem{prp}{Proposition}
\newtheorem{lem}{Lemma}
\newtheorem{open}{Open problem}
\def\beq{\begin{equation}}
\def\eeq{\end{equation}}
\def\beqn{\begin{eqnarray*}}
\def\eeqn{\end{eqnarray*}}
\def\bitem{\begin{itemize}}
\def\eitem{\end{itemize}}
\def\benum{\begin{enumerate}}
\def\eenum{\end{enumerate}}
\def\bmult{\begin{multline*}}
\def\emult{\end{multline*}}
\def\bcenter{\begin{center}}
\def\ecenter{\end{center}}
\newcommand{\thmref}[1]{Theorem~\ref{thm:#1}}
\newcommand{\prpref}[1]{Proposition~\ref{prp:#1}}
\newcommand{\lemref}[1]{Lemma~\ref{lem:#1}}
\newcommand{\secref}[1]{Section~\ref{sec:#1}}
\def\cA{\mathcal{A}}
\def\cB{\mathcal{B}}
\def\cC{\mathcal{C}}
\def\cE{\mathcal{E}}
\def\cF{\mathcal{F}}
\def\cG{\mathcal{G}}
\def\cI{\mathcal{I}}
\def\cJ{\mathcal{J}}
\def\cN{\mathcal{N}}
\def\cT{\mathcal{T}}
\def\cV{\mathcal{V}}
\def\bW{\mathbf{W}}
\def\bbG{\mathbb{G}}
\def\bbR{\mathbb{R}}
\newcommand{\E}{\operatorname{\mathbb{E}}}
\renewcommand{\P}{\operatorname{\mathbb{P}}}
\newcommand{\Var}{\operatorname{Var}}
\newcommand{\pr}[1]{\mathbb{P}\left(#1\right)}
\def\Bin{\text{Bin}}
\def\eps{\varepsilon}
\newcommand{\1}{\mathds{1}}
\definecolor{purple}{rgb}{0.4,.1,.9}
\def\tot{W}
\def\scan{W_n^*}
\def\nn{n^{(2)}}
\def\NN{N^{(2)}}
\def\kk{k^{(2)}}
\def\Lt{\tilde{L}}
\def\kmin{k_{\rm min}}
\newcommand{\IND}[1]{\1_{\{ #1 \}}}
\def\Cmax{\cC_{\rm max}}
\def\comp{\mathsf{c}}
\def\tree{N^{\rm tree}}
\begin{document}
\thispagestyle{empty}

\noindent {\sc \LARGE Community Detection in Sparse Random Networks}

\bigskip 
\noindent {\Large
Nicolas Verzelen\footnote{INRA, UMR 729 MISTEA, F-34060 Montpellier, FRANCE}
and
Ery Arias-Castro\footnote{Department of Mathematics, University of California, San Diego, USA}} \\[.05in]

\bigskip
\noindent
We consider the problem of detecting a tight community in a sparse random network.  This is formalized as testing for the existence of a dense random subgraph in a random graph.  Under the null hypothesis, the graph is a realization of an Erd\"os-R\'enyi graph on $N$ vertices and with connection probability $p_0$; under the alternative, there is an unknown subgraph on $n$ vertices where the connection probability is $p_1 > p_0$.  In \citep{subgragh_detection}, we focused on the asymptotically {\em dense} regime where $p_0$ is large enough that $np_0>(n/N)^{o(1)}$.  We consider here the asymptotically {\em sparse} regime where $p_0$ is small enough that $np_0<(n/N)^{c_0}$ for some $c_0>0$.  As before, we derive information theoretic lower bounds, and also establish the performance of various tests.  Compared to our previous work \citep{subgragh_detection}, the arguments for the lower bounds are based on the same technology, but are substantially more technical in the details; also, 
the 
methods we study are different: besides a variant of the scan statistic, we study other tests statistics such as the size of the largest connected component, the number of triangles, and the number of subtrees of a given size.  Our detection bounds are sharp, except in the Poisson regime where we were not able to fully characterize the constant arising in the bound.

\bigskip
\noindent {\bf Keywords:} community detection, detecting a dense subgraph, minimax hypothesis testing, Erd\"os-R\'enyi random graph, scan statistic, planted clique problem, largest connected component.

\section{Introduction}
\label{sec:intro}

Community detection refers to the problem of identifying communities in networks, e.g., circles of friends in social networks, or groups of genes in graphs of gene co-occurrences \citep{PhysRevE.80.056117,PhysRevE.69.026113,Bickel15122009, Newman06062006,PhysRevE.74.016110,Girvan11062002}.  
Although fueled by the increasing importance of graph models and network structures in applications, and the emergence of large-scale social networks on the Internet, the topic is much older in the social sciences, and the algorithmic aspect is very closely related to graph partitioning, a longstanding area in computer science. 
We refer the reader to the comprehensive survey paper of \cite{Santo201075} for more examples and references.

By community detection we mean, here, something slightly different.  Indeed, instead of aiming at extracting the community (or communities) from within the network, we simply focus on deciding whether or not there is a community at all.  Therefore, instead of considering a problem of graph partitioning, or clustering, we consider a problem of testing statistical hypotheses.  
We observe an undirected graph $\cG = (\cE, \cV)$ with $N := |\cV|$ nodes.  Without loss of generality, we take $\cV = [N] := \{1, \dots, N\}$.  The corresponding adjacency matrix is denoted $\bW = (W_{i,j}) \in \{0,1\}^{N \times N}$, where $W_{i,j} = 1$ if, and only if, $(i,j) \in \cE$, meaning there is an edge between nodes $i, j \in \cV$.  Note that $\bW$ is symmetric, and we assume that $W_{ii} = 0$ for all $i$.  Under the null hypothesis, the graph $\cG$ is a realization of $\bbG(N, p_0)$, the Erd\"os-R\'enyi random graph on $N$ nodes with probability of connection $p_0 \in (0,1)$; equivalently, the upper diagonal entries of $\bW$ are independent and identically distributed with $\P(W_{i,j} = 1) = p_0$ for any $i \neq j$.  Under the alternative, there is a subset of nodes indexed by $S \subset    \cV$ such that $\P(W_{i,j} = 1) = p_1$ for any $i,j \in S$ with $i \neq j$, while $\P(W_{i,j} = 1) = p_0$ for any other pair of nodes $i \neq j$.  We assume that $p_1 > p_0$, implying that the connectivity is 
stronger 
between nodes in $S$, so that $S$ is an assortative community.  
The subset $S$ is not known, although in most of the paper we assume that its size $n := |S|$ is known. 
Let $H_0$ denote the null hypothesis, which consists of $\bbG(N, p_0)$ and is therefore simple.  And let $H_S$ denote the alternative where $S$ is the anomalous subset of nodes.  We are testing $H_0$ versus $H_1 := \bigcup_{|S|=n} H_S$. 
We consider an asymptotic setting where 
\beq \label{asymp}
N \to \infty, \quad n = n(N) \to \infty, \quad n/N \to 0, \quad n/\log N \to \infty,
\eeq 
meaning the graph is large in size, and the subgraph is comparatively small, but not too small.  Also, the probabilities of connection, $p_0 = p_0(N)$ and $p_1 = p_1(N)$, may change with $N$ --- in fact, they will tend to zero in most of the paper.

Despite its potential relevance to applications, this problem has received considerably less attention.  We mention the work of \cite{wang2008spatial} who, in a somewhat different model, propose a test based on a statistic similar to the modularity of \cite{PhysRevE.69.026113}; the test is evaluated via simulations.  \cite{MR2460888} consider the problem of detecting a clique, a problem that we addressed in detail in our previous paper \citep{subgragh_detection}, and which is a direct extension of the `planted clique problem' \citep{MR2735341,MR1662795,dekel}.
\cite{rukhin2012limiting} consider a test based on the maximum number of edges among the subgraphs induced by the neighborhoods of the vertices in the graph; they obtain the limiting distribution of this statistic in the same model we consider here, with $p_0$ and $p_1$ fixed, and $n$ is a power of $N$, and in the process show that their test reduces to the test based on the maximum degree.  Closer in spirit to our own work, \cite{1109.0898} study this testing problem in the case where $p_0$ and $p_1$ are fixed.  A dynamic setting is considered in \citep{MR2758643,mongiovinetspot,6380528} where the goal is to detect changes in the graph structure over time.

\subsection{Hypothesis testing}

 We start with some concepts related to hypothesis testing.
We refer the reader to \citep{TSH} for a thorough introduction to the subject.
A test $\phi$ is a function that takes $\bW$ as input and returns $\phi =1$ to claim there is a community in the network, and $\phi=0$ otherwise.  The (worst-case) risk of a test $\phi$ is defined as
\beq \label{gamma}
\gamma_N(\phi) = \P_0(\phi = 1) + \max_{|S| = n} \P_S(\phi = 0) \ ,
\eeq
where $\P_0$ is the distribution under the null $H_0$ and $\P_S$ is the distribution under $H_S$, the alternative where $S$ is anomalous.  We say that a sequence of tests $(\phi_N)$ for a sequence of problems $(\bW_N)$ is asymptotically powerful (resp.~powerless) if $\gamma_N(\phi_N) \to 0$ (resp.~$\to 1$).  We will often speak of a test being powerful or powerless when in fact referring to a sequence of tests and its asymptotic power properties.  Then, practically speaking, a test is asymptotically powerless if it does not perform substantially better than any method that ignores the adjacency matrix $\bW$, i.e., guessing.  We say that the hypotheses merge asymptotically if 
\[
\gamma_N^* := \inf_{\phi} \gamma_N(\phi) \to 1 \ ,
\]
and that the hypotheses separate completely asymptotically if $\gamma_N^* \to 0$, which is equivalent to saying that there exists a sequence of asymptotically powerful tests.  Note that if $\liminf \gamma_N^* > 0$, no sequence of tests is asymptotically powerful, which includes the special case where the two hypotheses are contiguous.

Our general objective is to derive the detection boundary for the problem of community detection. On the one hand, we want to characterize the range of parameters $(n,N,p_0,p_1)$ such that either all tests are asymptotically powerless $(\gamma_N^*\to 1)$ or no test is asymptotically powerful $(\lim\inf \gamma_N^*> 0)$. On the other hand, we want to introduce asymptotically minimax optimal tests, that is tests $\phi$ satisfying  $\gamma_N(\phi)\to 0$ whenever $\gamma_N(\phi)\to 0$ or  $\lim\sup \gamma_N^*<1$ whenever $\lim\sup \gamma_N^*<1$.

\subsection{Our previous work}
We recently considered this testing problem in \citep{subgragh_detection}, focusing on the {\em dense} regime where $\log(1 \vee (np_0)^{-1}) = o(\log(N/n))$ or equivalently $p_0\geq n^{-1}(n/N)^{o(1)}$.  (For $a,b \in \bbR$, $a \wedge b$ denotes the minimum of $a$ and $b$ and $a \vee b$ denotes their maximum.)  We obtained information theoretic lower bounds, and we proposed and analyzed a number of methods, both when $p_0$ is known and when it is unknown.  (None of the methods we considered require knowledge of $p_1$.)  In particular, a combination of the total degree test based on 
\beq \label{total-stat}
\tot := \sum_{1 \leq i < j \leq N} W_{i,j} \ ,
\eeq
and the scan test based on 
\beq \label{scan-stat}
\scan := \max_{|S| = n} W_S, \qquad W_S := \sum_{i,j\in S, i < j} W_{i,j} \ ,
\eeq
was found to be asymptotically minimax optimal when $p_0$ is known and when $n$ is not too small, specifically $n/\log N \to \infty$.  This extends the results that \cite{1109.0898} obtained for $p_0$ and $p_1$ fixed (and $p_0$ known).  In that same paper, we also proposed and studied a convex relaxation of the scan test, based on the largest $n$-sparse eigenvalue of $\bW^2$, inspired by related work of \cite{berthet}.

\subsection{Contribution}

Continuing our work, in the present paper we focus on the {\em sparse} regime where 
\beq \label{sparse}
p_0\le \frac{1}{n} \left(\frac{n}{N}\right)^{c_0} \text{ for some constant $c_0 > 0$.}
\eeq
Obviously, \eqref{sparse} implies that $n p_0 \le 1$.
We define
\beq \label{lambda}
\lambda_0 = N p_0, \qquad \lambda_1 = n p_1,
\eeq
and note that $\lambda_0$ and $\lambda_1$ may vary with $N$. 
Our results can be summarized as follows.

\medskip

\noindent 
{\bf Regime 1: $\lambda_0=(N/n)^\alpha$ with fixed $0<\alpha<1$.} 
Compared to the setting in our previous work \citep{subgragh_detection}, the total degree test \eqref{total-stat} remains a contender, scanning over subsets of size exactly $n$ as in \eqref{scan-stat} does not seem to be optimal anymore, all the more so when $p_0$ is small.  Instead, we scan over subsets of a wider range of sizes, using
\begin{equation}\label{new-scan}
W_n^\ddag= \sup_{k=n/u_N}^{n}\frac{W_k^*}{k} \ ,
\end{equation}
where $u_N=\log\log(N/n)$.
We call this the broad scan test.  In analogy with our previous results in \citep{subgragh_detection}, we find that a combination of the total degree test \eqref{total-stat} and the broad scan test based on \eqref{new-scan} is asymptotically optimal when $\lambda_0\to \infty$, in the following sense. 
Suppose 
$n=N^{\kappa}$ with $0<\kappa<1$. 
When $\kappa>\frac{1+\alpha}{2+\alpha}$, the total degree test is asymptotically powerful when $\lambda_1\gg \frac{N^{(1+\alpha)/2}}{n^{1+\alpha}}$ and the two hypotheses merge asymptotically when $\lambda_1\ll \frac{N^{(1+\alpha)/2}}{n^{1+\alpha}}$. 
(For two sequences of reals, $(a_N)$ and $(b_N)$, we write $a_N \ll b_N$ to mean that $a_N = o(b_N)$.)
When $\kappa<\frac{1+\alpha}{2+\alpha}$, that is for smaller $n$, there exists a sequence of increasing functions $\psi_n$ (defined in Theorem \ref{thm:broad}) such that the broad scan test is asymptotically powerful when $\lim\inf (1-\alpha)\psi_n(\lambda_1)>1$ and the hypotheses merge asymptotically when $\lim\sup (1-\alpha)\psi_n(\lambda_1)<1$. Furthermore, as $n \to \infty$, $\psi_n(\lambda) \asymp \lambda$ when $\lambda\geq 1$ remains fixed, while $\psi_n(1)\to 1$, and $\psi_n(\lambda)\sim \lambda/2$ for $\lambda\to \infty$.  
As a consequence, the  broad scan test is asymptotically powerful when $\lambda_1$ is larger than (up to a numerical) $(1-\alpha)^{-1}$.
See Table \ref{tab:polynomial} for a visual summary. 
(For two real sequences, $(a_N)$ and $(b_N)$, we write $a_N \prec b_N$ to mean that $a_N = O(b_N)$, and $a_N \asymp b_N$ when $a_N \prec b_N$ and $a_N \succ b_N$.)

\begin{table}
\caption{
Detection boundary and near-optimal algorithms  in the regime $\lambda_0=(N/n)^{\alpha}$ with $0<\alpha<1$ and $n=N^{\kappa}$ with $0<\kappa<1$.
Here, `undetectable' means that that the hypotheses merge asymptotically, while `detectable' means that there exists an asymptotically powerful test. 
}
\label{tab:polynomial}
\centering
\medskip
\def\arraystretch{2}
\begin{tabular}{ c ||c | c }
 $\kappa$ &  $\kappa<\frac{1+\alpha}{2+\alpha}$  &  $\kappa> \frac{1+\alpha}{2+\alpha}$\\ \hline 
Undetectable  &  $\lambda_1 \prec (1-\alpha)^{-1}$;  Exact Eq. in \eqref{lower4} &  $\lambda_1 \ll \frac{N^{(1+\alpha)/2}}{n^{1+\alpha}}$\\ 
Detectable & $\lambda_1 \succ (1-\alpha)^{-1}$; Exact Eq. in \eqref{broad1}&   $\lambda_1 \gg \frac{N^{(1+\alpha)/2}}{n^{1+\alpha}}$ \\[.05in] 
\hline 
Optimal test  & {\sc Broad Scan test} & {\sc Total Degree test}
\end{tabular}

\end{table}


\medskip

When $N^{-o(1)}\leq \lambda_0\leq (N/n)^{o(1)}$ and $n=N^{\kappa}$ with $1/2<\kappa<1$, the total degree test is optimal, in the sense that it is asymptotically powerful for $\lambda_1^2/\lambda_0\gg n^2/N$, while the hypotheses merge asymptotically for $\lambda_1^2/\lambda_0\ll n^2/N$. This is why we assume in the remainder of this discussion that $n=N^{\kappa}$ with $0<\kappa<1/2$.

\medskip

\noindent 
{\bf Regime 2: $\lambda_0\to \infty$ with $\log(\lambda_0)=o[\log(N/n)]$.}   When $\kappa<\tfrac{1}{2}$,  the broad scan test is asymptotically powerful when $\lim\inf\lambda_1 >1$ and the hypotheses merge asymptotically when $\lim\sup \lambda_1 <1$. See the first line of Table \ref{tab:divergence} for a visual summary.

\medskip

\noindent
{\bf Regime 3: $\lambda_0>0$ and $\lambda_1>0$ are fixed.} 
The Poissonian regime where $\lambda_0$ and $\lambda_1$ are assumed fixed is depicted on Figure~\ref{fig:poisson}.
 When $\lambda_1>1$, the broad scan test is asymptotically powerful. When $\lambda_0>e$ and $\lambda_1<1$, no test is able to fully separate the hypotheses. 
In fact, for any fixed $(\lambda_0,\lambda_1)$  a test based on the number of triangles has some nontrivial power (depending on $(\lambda_0,\lambda_1)$), implying that the two hypotheses do not completely merge in this case. 
The case  where $ \lambda_0 < e$ is not completely settled. No test is able to fully separate the hypotheses if $\lambda_1 <\sqrt{\lambda_0/e}$.  The largest connected component test is optimal up to a constant when $\lambda_0 < 1$ and a test based on counting subtrees of a certain size bridges the gap in constants for $1 \le \lambda_0<e$, but not completely. 
When $\lambda_0$ is bounded from above and $\lambda_1=o(1)$, the two hypotheses merge asymptotically.

\begin{figure} 
\begin{center}
\includegraphics[width=9cm]{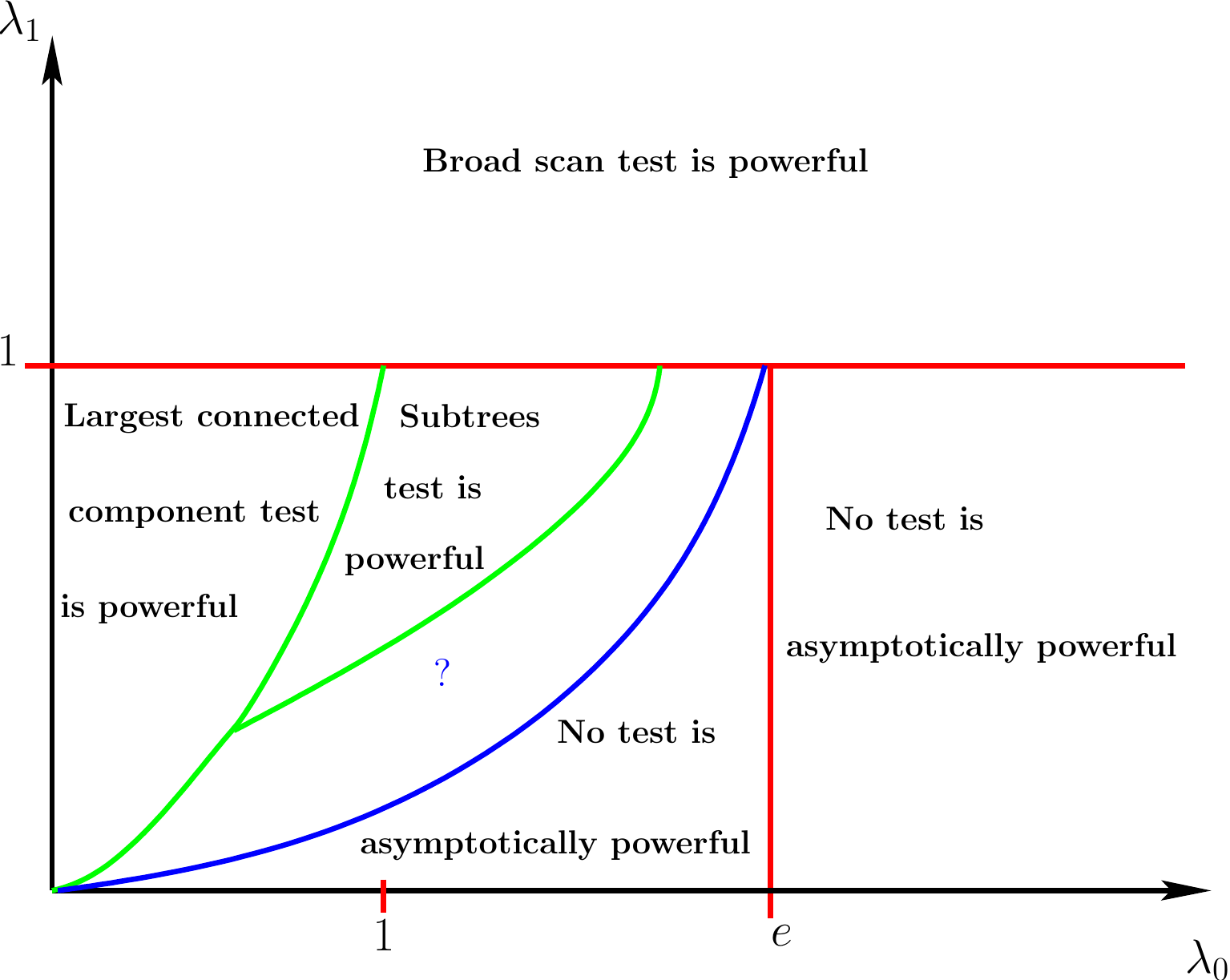} 
\caption{Detection diagram  in the poissonian asymptotic where $\lambda_0$ and $\lambda_1$ are fixed and $n=N^{\kappa}$ with $0<\kappa<1/2$. }
\label{fig:poisson}
\end{center}
\end{figure}

\medskip

\noindent 
{\bf Regime 4: $\lambda_0=o(1)$ with $\log(1/\lambda_0)=o[\log(N)]$.} Finally, when $\lambda_0 \to 0$, the largest connected component test is asymptotically optimal.  See Table~\ref{tab:divergence}.

\begin{table}
\caption{Detection boundary and near-optimal algorithms  in the regimes $\lambda_0\to \infty $  and $\lambda_0\to 0$ and $n=N^{\kappa}$ with $0<\kappa<1/2$.
For $1/2<\kappa<1$, the detection boundary accurs at $\lambda_1 \asymp  N^{1/2}/n^{2}$ and is achieved by the total degree test.
}
\label{tab:divergence}
\centering
\medskip
\def\arraystretch{2}
\begin{tabular}{c||c|c}
$\lambda_0$ & $1\ll \lambda_0\ll \left(\frac{N}{n}\right)^{o(1)}$ & $\frac{1}{N^{o(1)}}\leq \lambda_0=o(1)$ \\ \hline
Undetectable  & $\lim\sup \lambda_1<1$ & $\lim\sup \frac{\log(\lambda_1^{-1})}{\log(\lambda_0^{-1})}>\kappa$ \\ 
Detectable & $\lim\inf \lambda_1>1$ & $\lim\inf \frac{\log(\lambda_1^{-1})}{\log(\lambda_0^{-1})}<\kappa $ \\[.05in]  \hline
Optimal test & {\sc Largest CC test} &  {\sc Broad Scan test}
\end{tabular}
\end{table}


%

\subsection{Methodology for the lower bounds}
Compared to our previous work \citep{subgragh_detection}, the derivation of the various lower bounds here rely on the same general approach.  Let $\bbG(N,p_0; n, p_1)$ denote the random graph obtained by choosing $S$ uniformly at random among subsets of nodes of size $n$, and then generating the graph under the alternative with $S$ being the anomalous subset.  When deriving a lower bound, we first reduce the composite alternative to a simple alternative, by testing $H_0 : \bbG(N, p_0)$ versus $\bar{H}_1 := \bbG(N,p_0;n, p_1)$.  Let $L$ denote the corresponding likelihood ratio, i.e., $L = \sum_{|S| = n} L_S/\binom{N}{n}$, where $L_S$ is the likelihood ratio for testing $H_0$ versus $H_S$.
Then these hypotheses merge in the asymptote if, and only if, $L \to 1$ in probability under $H_0$.  A variant of the so-called `truncated likelihood' method, introduced by \cite{1109.0898}, consists in proving that $\E_0(\tilde L) \to 1$ and $\E_0(\tilde L^2) \to 1$, where $\tilde L$ is a truncated likelihood of the form 
$\tilde L = \sum_{|S| = n} L_S \1_{\Gamma_S}/\binom{N}{n}$,
where $\Gamma_S$ is a carefully chosen event.
(For a set or event $A$, $\1_A$ denotes the indicator function of $A$.) 
An important difference with our previous work is the more delicate choice of $\Gamma_S$, which here relies more directly on properties of the graph under consideration.
We mention that we use a variant to show that $H_0$ and $\bar{H}_1$ do {\em not} separate in the limit.  
This could be shown by proving that the two graph models $\bbG(N, p_0)$ and $\bbG(N,p_0;n, p_1)$ are contiguous.  The `small subgraph conditioning' method of \cite{MR1262985,MR1151355} --- see the more recent exposition in \citep{MR1725006} --- was designed for that purpose.  For example, this is the method that \cite{mossel2012stochastic} use to compare a Erd\"os-R\'enyi graph with a stochastic block model\footnote{This is a popular model of a network with communities, also known as the planted partition model.  In this model, the nodes belong to blocks: nodes in the same block connect with some probability $p_{\rm in}$, while nodes in different blocks connect with probability $p_{\rm out}$.} with two blocks of equal size.  
This method does not seem directly applicable in the situations that we consider here, in part because the second moment of the likelihood ratio, meaning $\E[L^2]$, tends to infinity at the limit of detection.

\subsection{Content}
The remaining of the paper is organized as follow.  
In \secref{prelim} we introduce some notation and some concepts in probability and statistics, including concepts related to hypothesis testing and some basic results on the binomial distribution.
In \secref{tests} we study some tests that are near-optimal in different regimes.
In \secref{lower} we state and prove information theoretic lower bounds on the difficulty of the detection problem.
In \secref{discussion} we discuss the situations where $p_0$ and/or $n$ are unknown, as well as open problems.
\secref{aux} contains some proofs and technical derivations.

\section{Preliminaries}
\label{sec:prelim}

In this section, we first define some general assumptions and some notation, although more notation will be introduced as needed.    We then list some general results that will be used multiple times throughout the paper.

\subsection{Assumptions and notation} \label{sec:notation}
We recall that $N \to \infty$ and the other parameters such as $n, p_0, p_1$ may change with $N$, and this dependency is left implicit.  
Unless otherwise specified, all the limits are with respect to $N \to \infty$.
We assume that $N^2 p_0 \to \infty$, for otherwise the graph (under the null hypothesis) is so sparse that number of edges remains bounded.  Similarly, we assume that $n^2 p_1 \to \infty$, for otherwise there is a non-vanishing chance that the community (under the alternative) does not contain any edges.  Throughout the paper, we assume that $n$ and $p_0$ are both known, and discuss the situation where they are unknown in \secref{discussion}.

Define 
\beq \label{a}
\alpha = \frac{\log \lambda_0}{\log(N/n)} \ ,
\eeq
which varies with $N$, and notice that $p_0= \frac{\lambda_0}N$ with $\lambda_0 = \big(\frac{N}n\big)^{\alpha}$. 
The dense regime considered in \citep{subgragh_detection} corresponds to $\liminf \alpha \ge 1$.  Here we focus on the sparse regime where $\limsup \alpha < 1$.  The case where $\alpha \to 0$ includes the Poisson regime where $\lambda_0$ is constant. 

Recall that $\cG = (\cV, \cE)$ is the (undirected, unweighted) graph that we observe, and for $S \subset \cV$, let $\cG_S$ denote the subgraph induced by $S$ in $\cG$.  

We use standard notation such as $a_N \sim b_N$ when $a_N/b_N \to 1$; $a_N = o(b_N)$ when $a_N/b_N \to 0$; $a_N = O(b_N)$, or equivalently $a_N\prec b_N$, when $\limsup_N |a_N/b_N| < \infty$; 
$a_N \asymp b_N$ when $a_N = O(b_N)$ and $b_N = O(a_N)$.
We extend this notation to random variables.  For example, if $A_N$ and $B_N$ are random variables, then $A_N \sim B_N$ if $A_N/B_N \to 1$ in probability.

For $x \in \bbR$, define $x_+=x\vee 0$ and $x_-=(-x) \vee 0$, which are the positive and negative parts of $x$.
For an integer $n$, let 
\beq \label{nn}
\nn = \binom{n}2 = \frac{n(n-1)}2 \ .
\eeq  

Because of its importance in describing the tails of the binomial distribution, the following function --- which is the relative entropy or Kullback-Leibler divergence of ${\rm Bern}(q)$ to ${\rm Bern}(p)$ --- will appear in our results:
\beq \label{H}
H_p(q) = q \log \left(\frac{q}p\right) + (1-q) \log \left(\frac{1 -q}{1 -p}\right), \quad p, q \in (0,1).
\eeq
We let $H(q)$ denote $H_{p_0}(q)$.

\subsection{Calibration of a test}
We say that the test that rejects for large values of a (real-valued) statistic $T = T_N(\bW_N)$ is asymptotically powerful if there is a critical value $t = t(N)$ such that the test $\{T \ge t\}$ has risk \eqref{gamma} tending to 0.  The choice of $t$ that makes this possible may depend on $p_1$.  In practice, $t$ is chosen to control the probability of type I error, which does not necessitate knowledge of $p_1$ as long as $T$ itself does not depend on $p_1$, which is the case of all the tests we consider here.  Similarly, we say that the test is asymptotically powerless if, for any sequence of reals $t = t(N)$, the risk of the test $\{T \ge t\}$ is at least 1 in the limit.

We prefer to leave the critical values implicit as their complicated expressions do not offer any insight into the theoretical
difficulty or the practice of testing for the presence of a dense subgraph. Indeed, if a method can run efficiently, then most practitioners will want to calibrate it by simulation (permutation or parametric bootstrap, when $p_0$ is unknown). Besides, the interested reader will be able to obtain the (theoretical) critical values by a cursory examination of the proofs.

\subsection{Some general results}

Remember the definition of the entropy function in \eqref{H}.  
The following is a simple concentration inequality for the binomial distribution.

\begin{lem}[Chernoff's bound] \label{lem:chernoff}
For any positive integer $n$, any $q, p \in (0,1)$, we have
\beq \label{chernoff}
\pr{\Bin(n, p) \ge q n} \leq \exp\left(- n H_p(q) \right).
\eeq
\end{lem}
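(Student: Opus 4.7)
The plan is the classical Cram\'er--Chernoff exponential-moment method, which gives an informative bound in the regime $q \geq p$ (implicit in the statement, since for $q<p$ the upper-tail probability can be close to $1$ while $\exp(-nH_p(q))$ is strictly smaller than $1$). Writing $X \sim \Bin(n,p)$ as $X = \sum_{i=1}^n Y_i$ with $Y_i$ i.i.d.\ Bernoulli$(p)$, Markov's inequality applied to $e^{\theta X}$ combined with the product form of the moment generating function $\E[e^{\theta X}] = (1-p+p e^\theta)^n$ yields, for every $\theta \geq 0$,
\[
\pr{X \geq q n} \;\leq\; e^{-\theta q n}\, \E\bigl[e^{\theta X}\bigr] \;=\; \exp\!\bigl(n\bigl[\log(1-p+p e^\theta) - \theta q\bigr]\bigr).
\]

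The next step is to minimize the bracketed exponent over $\theta \geq 0$. A one-line differentiation produces the optimizer $\theta^\star = \log\bigl(\tfrac{q(1-p)}{p(1-q)}\bigr)$, which is nonnegative precisely because $q \geq p$. Substituting $\theta^\star$ back in, and using the first-order condition in the rewritten form $1 - p + p e^{\theta^\star} = (1-p)/(1-q)$, the exponent collapses to $-\bigl[q\log(q/p) + (1-q)\log((1-q)/(1-p))\bigr] = -H_p(q)$ by direct algebraic simplification, which is the announced conclusion.

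No substantive obstacle is expected: this is a textbook computation. The only items to watch are (i) the sign $\theta^\star \geq 0$, which is what ties the statement to the upper tail $q \geq p$, and (ii) the boundary values $q, p \in \{0,1\}$, which are excluded by the hypothesis $p, q \in (0,1)$. Should a lower-tail version be needed elsewhere in the paper, the entirely symmetric argument with $\theta \leq 0$ delivers $\pr{\Bin(n,p) \leq q n} \leq \exp(-n H_p(q))$ for $q \leq p$.
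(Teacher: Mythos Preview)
The paper does not actually supply a proof of this lemma; it is stated as a standard concentration inequality and used as a black box throughout. Your argument is the textbook Cram\'er--Chernoff optimization and is correct, including the observation that the bound is only informative in the upper-tail regime $q \geq p$ (which is indeed how the paper always invokes it).
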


Here are some asymptotics for the entropy function. 
\begin{lem} \label{lem:H}
Define $h(x) = x \log x - x + 1$.
For $0 < p \le q < 1$, we have
\[
0\leq H_{p}(q) - p \, h(q/p) \leq  O\Big(\frac{q^2}{1-q}\Big) \ .
\]
\end{lem}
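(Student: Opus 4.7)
The plan is to prove the lemma by direct algebraic manipulation, starting from an explicit expansion of $H_p(q) - p\,h(q/p)$ and then treating the lower and upper bounds separately.

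First, I would expand
\[
p\,h(q/p) = p\bigl[(q/p)\log(q/p) - q/p + 1\bigr] = q\log(q/p) - q + p,
\]
so that the term $q\log(q/p)$ cancels against the first term of $H_p(q)$, leaving the compact identity
\[
H_p(q) - p\,h(q/p) = (1-q)\log\!\frac{1-q}{1-p} + (q-p).
\]
Call this quantity $f(p)$ (with $q$ fixed). Both bounds then reduce to estimates on $f$ for $0 < p \le q < 1$.

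For the lower bound, I would differentiate in $p$:
\[
f'(p) = \frac{1-q}{1-p} - 1 = \frac{p - q}{1-p} \le 0 \quad \text{for } p \le q,
\]
and observe $f(q) = 0$. Hence $f$ is nonincreasing on $(0,q]$ and $f(p) \ge f(q) = 0$.

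For the upper bound, I would introduce $u := q - p \ge 0$ and $v := u/(1-q)$, so that $1-p = (1-q)(1+v)$ and
\[
f(p) = -(1-q)\log(1+v) + u = -(1-q)\log(1+v) + (1-q)v.
\]
The elementary inequality $\log(1+v) \ge v - v^2/2$ for $v \ge 0$ (verified by checking that its difference vanishes at $0$ and has nonnegative derivative $v^2/(1+v)$) then gives
\[
f(p) \le (1-q)\,\frac{v^2}{2} = \frac{u^2}{2(1-q)} \le \frac{q^2}{2(1-q)},
\]
which is the desired $O(q^2/(1-q))$ bound.

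There is no real obstacle here beyond getting the algebraic simplification right; the only thing to watch is making sure the elementary $\log$ inequality is applied in the right direction (we need $\log(1+v) \ge v - v^2/2$, not the reverse, since the coefficient $-(1-q)$ flips signs), and that the cancellation $(1-q)v = u$ is used at the end to eliminate the linear term.
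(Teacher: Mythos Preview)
Your proof is correct. The paper states this lemma as a preliminary result without giving a proof, so there is no argument to compare against; your direct computation of the identity
\[
H_p(q) - p\,h(q/p) = (1-q)\log\!\frac{1-q}{1-p} + (q-p)
\]
followed by the monotonicity argument for the lower bound and the substitution $v=(q-p)/(1-q)$ with $\log(1+v)\ge v-v^2/2$ for the upper bound is exactly the kind of elementary verification the authors presumably had in mind.
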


The following are standard bounds on the binomial coefficients.  Recall that $e = \exp(1)$.

\begin{lem} \label{lem:binom}
For any integers $1 \le k \le n$,
\beq \label{binom}
\left(\frac{n}k\right)^k \le {n \choose k} \le \left(\frac{e n}k\right)^k \ .
\eeq
\end{lem}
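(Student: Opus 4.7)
The plan is to prove the two inequalities separately, since both are standard and elementary.

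For the lower bound, I would write the binomial coefficient in the telescoped form
\[
\binom{n}{k} \;=\; \prod_{i=0}^{k-1} \frac{n-i}{k-i},
\]
and then show that each factor is at least $n/k$. Indeed, the inequality $\frac{n-i}{k-i} \ge \frac{n}{k}$ is equivalent, after clearing denominators, to $k(n-i) \ge n(k-i)$, i.e.\ $i(n-k) \ge 0$, which holds because $n \ge k$ and $i \ge 0$. Multiplying the $k$ inequalities yields $\binom{n}{k} \ge (n/k)^k$.

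For the upper bound, I would combine the trivial estimate $\binom{n}{k} \le n^k/k!$ with the inequality $k! \ge (k/e)^k$. The latter follows immediately from the power series
\[
e^k \;=\; \sum_{j=0}^{\infty} \frac{k^j}{j!} \;\ge\; \frac{k^k}{k!},
\]
which rearranges to $k! \ge k^k/e^k = (k/e)^k$. Substituting back gives $\binom{n}{k} \le n^k/(k/e)^k = (en/k)^k$, as desired.

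There is no real obstacle here; the only thing to be careful about is the direction of the monotonicity argument in the lower bound (one must verify $n\ge k$ is what makes each ratio exceed $n/k$, rather than the opposite), and the neat trick of extracting $k!\ge (k/e)^k$ from a single term of the Taylor series of $e^k$ instead of invoking full Stirling asymptotics.
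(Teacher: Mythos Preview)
Your proof is correct; both inequalities are established cleanly and the arguments are the standard ones. The paper itself does not supply a proof of this lemma, simply stating it as a well-known bound, so your write-up in fact fills in what the paper leaves implicit.
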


Let ${\rm Hyp}(N, m, n)$ denotes the hypergeometric distribution counting the number of red balls in $n$ draws from an urn containing $m$ red balls out of~$N$.

\begin{lem} \label{lem:hyper}
${\rm Hyp}(N, m, n)$ is stochastically smaller than ${\rm Bin}(n, \rho)$, where $\rho := \frac{m}{N-m}$.
\end{lem}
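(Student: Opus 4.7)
I would prove the stochastic dominance by constructing an explicit coupling of $H \sim {\rm Hyp}(N,m,n)$ and $B \sim {\rm Bin}(n,\rho)$ on a common probability space such that $H \leq B$ almost surely; this is well known to be equivalent to the stochastic ordering. The natural construction is sequential. Let $U_1, \ldots, U_n$ be i.i.d.\ uniform on $[0,1]$. Set $Y_k := \1_{\{U_k \leq \rho\}}$, so that $(Y_k)$ are i.i.d.\ $\mathrm{Bern}(\rho)$ and $B := Y_1 + \cdots + Y_n \sim {\rm Bin}(n,\rho)$. In parallel, realize $H$ via $n$ sequential draws without replacement from the urn by letting $X_k := \1_{\{U_k \leq p_k\}}$, where
$$p_k := \frac{m - R_{k-1}}{N - k + 1}, \qquad R_{k-1} := X_1 + \cdots + X_{k-1},$$
is the conditional probability of drawing a red ball at step $k$ given the past. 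By design, $H = X_1 + \cdots + X_n \sim {\rm Hyp}(N,m,n)$.

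Under this coupling, the pointwise bound $X_k \leq Y_k$ holds as soon as $p_k \leq \rho$, so the proof reduces to the stepwise inequality
$$\frac{m - R_{k-1}}{N - k + 1} \leq \frac{m}{N - m},$$
which after clearing denominators is equivalent to
$$R_{k-1}(N-m) \geq m(k-1-m).$$
This captures the intuition that the ratio of reds to non-reds in the depleting urn does not exceed the initial ratio $\rho = m/(N-m)$. I would verify it by a case analysis on $k$: if $k - 1 \leq m$, the right-hand side is nonpositive and the bound is automatic; if $k - 1 > m$, one exploits the deterministic constraint $R_{k-1} \geq (k - 1 - (N - m))_+$, which arises because the number of non-red balls drawn, $k - 1 - R_{k-1}$, cannot exceed the initial supply $N - m$ of non-reds.

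Once this pointwise inequality is established, summing $X_k \leq Y_k$ over $k = 1, \ldots, n$ gives $H \leq B$ almost surely, and hence $H \lsto B$ in the sense claimed. The main obstacle is the algebraic verification in the late regime $k - 1 > m$, where the depletion effects and the supply constraint must be combined carefully to close the bound; the coupling and the sequential representation of $H$ then do all the remaining work.
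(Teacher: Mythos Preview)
Your coupling strategy is natural, and the Case-1 branch ($k-1 \le m$) is handled correctly: there the right-hand side $m(k-1-m)$ is nonpositive and $p_k \le \rho$ follows immediately. Since the paper only ever invokes the lemma with $m=n$ (to bound $K \sim {\rm Hyp}(N,n,n)$ by $\Bin(n,\rho)$ with $\rho=n/(N-n)$), every step index satisfies $k-1 \le n-1 < m+1$, so your Case~1 alone already yields a complete proof for the paper's purposes. (The paper states the lemma without proof.)

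The Case-2 branch, however, does not go through. When $m < k-1 \le N-m$, the supply constraint you invoke, $R_{k-1} \ge (k-1-(N-m))_+$, is vacuous, while the target inequality $R_{k-1}(N-m) \ge m(k-1-m)$ has a strictly positive right-hand side. On the event $\{R_{k-1}=0\}$ (which has positive probability whenever $k-1 \le N-m$) one gets $p_k = m/(N-k+1) > m/(N-m) = \rho$, so the pointwise bound $X_k \le Y_k$ fails and your coupling can produce $H > B$. In fact the lemma \emph{as stated} is false without a side condition such as $n \le m+1$: for $N=4$, $m=1$, $n=3$, $\rho = 1/3$, one computes $\P(H \ge 1) = 3/4 > 19/27 = \P(B \ge 1)$. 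The correct fix is therefore not to patch Case~2 but to restrict the hypothesis (e.g.\ to $n \le m$, which is all the paper uses); your Case-1 argument then closes the proof cleanly.
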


\section{Some near-optimal tests} \label{sec:tests}

In this section we consider several tests and establish their performances.  We start by recalling the result we obtained for the total degree test, based on \eqref{total-stat}, in our previous work \citep{subgragh_detection}.  Recalling the definition of $\lambda_0$ and $\lambda_1$ in \eqref{lambda}, define 
\beq \label{zeta}
\zeta := \frac{(p_1 - p_0)^2}{p_0} \frac{n^4}{N^2} = \frac{\left(\lambda_1-\lambda_0n/N\right)^2}{\lambda_0}\frac{n^2}{N} \ .
\eeq

\begin{prp}[Total degree test] \label{prp:total}
The total degree test is asymptotically powerful if $\zeta \to \infty$, and asymptotically powerless if $\zeta \to 0$.
\end{prp}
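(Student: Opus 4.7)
The plan is to analyze the scalar law of $W$ under each hypothesis. Since $W$ is invariant under relabelling of vertices, $\P_S^W$ does not depend on $S$, so one may fix any $S$. Under $H_0$, $W \sim \Bin(\NN, p_0)$ has mean $\mu_0 := \NN p_0$ and variance $\sigma_0^2 := \NN p_0(1-p_0)$. Under $H_S$, $W$ is the independent sum of $\Bin(\nn, p_1)$ and $\Bin(\NN - \nn, p_0)$, with mean $\mu_1 := \mu_0 + \nn(p_1-p_0)$ and variance $\sigma_1^2 \leq \mu_1$. A direct computation gives $(\mu_1-\mu_0)^2/\mu_0 \asymp \zeta$, so the standardized gap $(\mu_1-\mu_0)/\sigma_0$ is of order $\sqrt\zeta$.

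For the powerful direction, assume $\zeta \to \infty$ and apply Chebyshev's inequality to the threshold test $\{W \geq (\mu_0+\mu_1)/2\}$. The two errors are bounded respectively by $4\sigma_0^2/(\mu_1-\mu_0)^2 = O(1/\zeta)$ and $4\sigma_1^2/(\mu_1-\mu_0)^2 \leq 4\mu_1/(\mu_1-\mu_0)^2$. The latter is again $O(1/\zeta)$ when $\mu_1 \asymp \mu_0$; in the complementary regime $\mu_1 \gg \mu_0$, the identity $(\mu_1-\mu_0)^2/\mu_0 = \zeta$ forces $\mu_1-\mu_0 \to \infty$ as $\zeta \to \infty$, so $\mu_1/(\mu_1-\mu_0)^2 \to 0$ as well.

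For the powerless direction, assume $\zeta \to 0$ and show $d_{\rm TV}(\P_0^W, \P_S^W) \to 0$; this suffices because the Neyman--Pearson optimal test based on $W$ is a threshold test and the worst-case risk is at least $1 - d_{\rm TV}(\P_0^W, \P_S^W)$. Naively bounding the TV by $d_{\rm TV}(\Bin(\nn, p_0), \Bin(\nn, p_1))$ would lose a factor of $N/n$, since the perturbation on the $\nn$ within-$S$ coordinates is only weakly observable through the scalar sum. Instead, one operates at the scalar level: under the standing assumption $\mu_0 \to \infty$, both $\P_0^W$ and $\P_S^W$ admit a local CLT approximation (or a Poisson approximation when $p_0$ vanishes fast enough) by laws with parameters $(\mu_0, \sigma_0^2)$ and $(\mu_1, \sigma_1^2)$, which satisfy $\sigma_0 \asymp \sigma_1$ (since $\zeta \to 0$ and $\mu_0 \to \infty$ together force $\mu_1 - \mu_0 \ll \mu_0$) and $(\mu_1-\mu_0)/\sigma_0 = O(\sqrt\zeta)$. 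The TV distance between the approximants is then $O(\sqrt\zeta) \to 0$, and so is $d_{\rm TV}(\P_0^W, \P_S^W)$.

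The main obstacle is the tight TV analysis in the powerless direction: one cannot afford off-the-shelf product-measure inequalities and must operate at the scalar level of $W$, which requires either a local CLT or a Poisson approximation depending on how fast $p_0$ vanishes. The powerful direction reduces cleanly to Chebyshev, aside from the brief case split at $\mu_1 = \Theta(\mu_0)$.
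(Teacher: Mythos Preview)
Your plan is correct, and in fact the paper does not prove this proposition at all --- it is quoted from the authors' earlier work \citep{subgragh_detection}, so there is nothing in the present paper to compare against. The Chebyshev argument for the powerful direction is exactly the standard one; your case split on $\mu_1 \asymp \mu_0$ versus $\mu_1 \gg \mu_0$ is fine, and the standing assumption $N^2 p_0 \to \infty$ (hence $\mu_0 \to \infty$) makes the second case go through as you say.

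One small simplification for the powerless direction: you do not need a local CLT, nor a separate Poisson regime. Since $W$ under $H_S$ is the convolution of $\Bin(\nn,p_1)$ and $\Bin(\NN-\nn,p_0)$, while under $H_0$ it is the convolution of $\Bin(\nn,p_0)$ and $\Bin(\NN-\nn,p_0)$, and the likelihood ratio $\Bin(\nn,p_1)/\Bin(\nn,p_0)$ has monotone likelihood ratio, the convolved laws also have MLR (convolution with a common kernel preserves MLR). Hence the Neyman--Pearson test based on $W$ is already a threshold test, which gives $d_{\rm TV}(\P_0^W,\P_S^W) = \sup_t |\P_0(W<t)-\P_S(W<t)|$, the Kolmogorov distance. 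Now ordinary Berry--Esseen (the Lyapunov version for non-identical summands under $H_S$) gives Kolmogorov error $O(1/\sigma_0)\to 0$ on both sides, uniformly in how fast $p_0\to 0$, and the Kolmogorov distance between the two approximating normals is $O\big(|\mu_1-\mu_0|/\sigma_0 + |\sigma_1^2/\sigma_0^2 - 1|\big)$. You already checked $(\mu_1-\mu_0)/\sigma_0 \asymp \sqrt{\zeta}$; for the variance term, $\sigma_1^2-\sigma_0^2 = \nn(p_1-p_0)(1-p_1-p_0) \le \mu_1-\mu_0 \ll \sqrt{\mu_0}$, so $\sigma_1^2/\sigma_0^2 - 1 = o(1)$. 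This avoids the regime split entirely.
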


In view of \prpref{total}, the setting becomes truly interesting when $\zeta \to 0$, which ensures that the naive total degree test is indeed powerless.

\subsection{The broad scan test} \label{sec:broad}

In the denser regimes that we considered in \citep{subgragh_detection}, the (standard) scan test based on $W_n^*$ defined in \eqref{scan-stat} played a major role.  In the sparser regimes we consider here, the broad scan test based on $W_n^\ddag$ defined in \eqref{new-scan} has more power.  
Assume that $\liminf \lambda_1 > 1$, so that $\cG_S$ is supercritical under $H_S$.  Then it is preferable to scan over the largest connected component in $\cG_S$ rather than scan $\cG_S$ itself.  

\begin{lem} \label{lem:Cmax}
For any $\lambda>1$, let $\eta_\lambda$ denote the smallest solution of the equation $\eta = \exp(\lambda(\eta-1))$.  Let $\cC_m$ denote a largest connected component in $\bbG(m, \lambda/m)$ and assume that $\lambda > 1$ is fixed.  Then, in probability, $|\cC_m| \sim (1-\eta_{\lambda}) m$ and $W_{\cC_m} \sim \frac{\lambda}2 (1 - \eta_{\lambda}^2) m$.
\end{lem}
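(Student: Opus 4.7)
\medskip
\noindent\textbf{Proof proposal.}
The first assertion, $|\cC_m|\sim (1-\eta_\lambda)m$, is the celebrated giant-component theorem for the supercritical Erd\H{o}s--R\'enyi model, which I would simply quote. The standard proof couples the breadth-first exploration of the component of a uniformly chosen vertex with a Galton--Watson branching process having $\mathrm{Poisson}(\lambda)$ offspring distribution; this branching process survives with probability $1-\eta_\lambda$, where $\eta_\lambda$ is exactly the smallest root of $\eta=e^{\lambda(\eta-1)}$. A sprinkling/second-moment argument then merges all sufficiently large (say, of size $\ge A\log m$ with $A$ large enough) components into a single giant one whose size concentrates at $(1-\eta_\lambda)m$.

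For the edge count $W_{\cC_m}$, I would use the pointwise identity
\[
W_{\cC_m} \;=\; W_{[m]} - W_{[m]\setminus \cC_m},
\]
which holds because, by definition of a connected component, no edge of $\cG$ can join $\cC_m$ to its complement. The total edge count $W_{[m]}=\sum_{1\le i<j\le m} W_{i,j}$ is a sum of $\binom{m}{2}$ i.i.d.\ $\mathrm{Bernoulli}(\lambda/m)$'s, so by the weak law of large numbers $W_{[m]} \sim \lambda m/2$ in probability. It therefore suffices to prove that $W_{[m]\setminus\cC_m} \sim \tfrac{\lambda\eta_\lambda^2}{2}m$ in probability, which then yields $W_{\cC_m}\sim (\lambda/2)(1-\eta_\lambda^2)m$.

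To estimate $W_{[m]\setminus\cC_m}$, the cleanest route is the classical \emph{duality principle}: conditionally on $\cC_m$, the induced subgraph on $[m]\setminus\cC_m$ is distributed, up to vanishing corrections, as a subcritical Erd\H{o}s--R\'enyi graph on $\sim \eta_\lambda m$ vertices with edge probability $\lambda/m$, whose edge count concentrates at $\binom{\eta_\lambda m}{2}\lambda/m\sim \lambda\eta_\lambda^2 m/2$ by the weak law. A more hands-on alternative is a direct first- and second-moment computation via the branching-process heuristic: by symmetry, $\E[W_{[m]\setminus\cC_m}] = \binom{m}{2}\,\P(W_{1,2}=1,\,1,2\notin \cC_m)$, and conditionally on $\{W_{1,2}=1\}$ the pair $\{1,2\}$ lies outside the giant iff the joint exploration rooted at $\{1,2\}$ (with the edge $\{1,2\}$ already revealed) dies out, an event of asymptotic probability $\eta_\lambda^2$ because the two branching processes growing from $1$ and $2$ are asymptotically independent. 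This yields the correct first moment, and an analogous four-vertex computation (checking asymptotic independence of the events that two disjoint pairs both lie outside the giant) gives a vanishing variance, hence concentration. The main technical hurdle is controlling the branching-process approximation error uniformly enough to make this second-moment estimate rigorous; this is exactly the content of the classical duality statement, which I would cite rather than reprove.
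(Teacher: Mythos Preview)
Your proposal is correct. The paper's own proof consists of two bare citations: the vertex count $|\cC_m|\sim(1-\eta_\lambda)m$ is quoted from van der Hofstad's lecture notes, and the edge count $W_{\cC_m}\sim\tfrac{\lambda}{2}(1-\eta_\lambda^2)m$ is attributed to a note in Pittel (2005), with no further argument.

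Your route is therefore more substantive. You reproduce the vertex-count part by citation, as the paper does, but for the edge count you give an actual argument: the decomposition $W_{\cC_m}=W_{[m]}-W_{[m]\setminus\cC_m}$ (valid because no edge leaves a connected component), the weak law for $W_{[m]}$, and the duality principle to show $W_{[m]\setminus\cC_m}\sim\tfrac{\lambda\eta_\lambda^2}{2}m$. This is a standard and correct way to obtain the result that the paper simply quotes from Pittel. What your approach buys is self-containment and an explanation of where the factor $1-\eta_\lambda^2$ comes from; what the paper's approach buys is brevity, since this lemma is background for them and they only ever use the lower bound on $W_{\cC_m}$ downstream. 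One small caveat: the duality statement you invoke (that the off-giant subgraph behaves like a subcritical $\bbG(\eta_\lambda m,\lambda/m)$) does require care with the conditioning, as you acknowledge; you are right that this is where the real work lies and is best handled by citation.
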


\begin{proof}
The bounds on the number of vertices in the giant component is well-known \citep[Th.~4.8]{remco:lecture}, while the lower bound on the number of edges comes from \citep[Note 5]{Pittel2005127}. 
\end{proof}

By \lemref{Cmax}, most of the edges of $\cG_S$ lie in its giant component, which is of size roughly $(1-\eta_{\lambda_1}) n$.  This informally explains why a test based on $W^*_{n(1-\eta_{\lambda_1})}$ is more promising that the standard scan test based on $W_n^*$.  

In the details, the exact dependency of the optimal subset size to scan over seems rather intricate.  This is why in $W_n^\ddag$ we scan over subsets of size $n/u_N \le k \le n$.  (Recall that $u_N=\log\log(N/n)$, although the exact form of $u_N$ is not important.)
For any subset $S \subset \cV$, let 
\[
W_{k,S}^*=\max_{T\subset S, |T|=k} W_T \ .
\]  
Note that $W_{k,\cV}^* = W_k^*$ defined in \eqref{scan-stat}.
Recall the definition of the exponent $\alpha$ in \eqref{a}.

\begin{thm}[Broad scan test] \label{thm:broad}
The scan test based on $W_n^\ddag$ is asymptotically powerful if 
\beq \label{broad1}
\lim\sup \alpha \leq 1 \quad \text{ and } \quad \lim\inf \ (1-\alpha)\sup_{k=n/u_N}^n \frac{\E_S[W_{k,S}^*]}{k} > 1\ ;
\eeq
or 
\beq \label{broad2}
\alpha  \to 0 \quad \text{ and } \quad \lim\inf \lambda_1 > 1\ .
\eeq
\end{thm}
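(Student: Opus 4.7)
The plan is to choose a critical value $\tau_N$ slightly above $(1-\alpha)^{-1}$ (reducing to $1^+$ in case~(2)) and to show both $\P_0(W_n^\ddag \geq \tau_N) \to 0$ and $\P_S(W_n^\ddag \geq \tau_N) \to 1$ uniformly in $S$. For the null, I would apply Chernoff's inequality (Lemma~\ref{lem:chernoff}) together with a union bound over the $\binom{N}{k} \leq (eN/k)^k$ subsets and over the $O(n)$ values of $k \in [n/u_N, n]$, giving
\begin{equation*}
\P_0(W_k^* \geq \tau k) \leq \exp\!\Big(k\log(eN/k) - \binom{k}{2} H_{p_0}\big(2\tau/(k-1)\big)\Big).
\end{equation*}
Lemma~\ref{lem:H} together with $h(x) \sim x \log x$ gives $H_{p_0}(q) \sim q\log(q/p_0)$ whenever $q \gg p_0$. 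With $q = 2\tau/(k-1)$ and $p_0 = \lambda_0/N$, $\lambda_0 = (N/n)^\alpha$, this yields $\log(q/p_0) \sim (1-\alpha)\log(N/n)$ for $k\asymp n$, hence $\binom{k}{2} H_{p_0}(q) \sim \tau k (1-\alpha)\log(N/n)$. The exponent simplifies to $k\log(N/n)\bigl(1-\tau(1-\alpha)\bigr)(1+o(1))$, which tends to $-\infty$ as soon as $\tau(1-\alpha) > 1$. A polynomial factor from the union over $k$ is absorbed, giving $\P_0(W_n^\ddag \geq \tau_N) \to 0$ for $\tau_N = (1+\eta_N)/(1-\alpha)$ with $\eta_N\to 0$ slowly.

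For the alternative, in case~(1) the hypothesis provides some $k^* \in [n/u_N, n]$ with $\E_S[W_{k^*,S}^*]/k^* \geq (1+\epsilon_0)/(1-\alpha)$ for a fixed $\epsilon_0 > 0$. I would then apply McDiarmid's bounded-differences inequality: $W_{k^*,S}^*$ is a function of the $\binom{n}{2}$ independent edges of $\cG_S$ and a single edge flip changes it by at most one, so
\begin{equation*}
\P_S\!\left(W_{k^*,S}^* \leq \E_S W_{k^*,S}^* - t\right) \leq \exp\!\left(-2t^2/\binom{n}{2}\right).
\end{equation*}
Setting $t\asymp \epsilon_0 k^*/(1-\alpha)$ places the right-hand side below $\tau_N k^*$ and the bound vanishes provided $k^{*2}/\bigl((1-\alpha)^2 n^2\bigr) \to \infty$. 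In case~(2), where $\alpha\to 0$ and $\liminf \lambda_1 > 1$, I would instead exhibit an explicit dense witness subgraph. Classical results on the cores of sparse random graphs (Pittel--Spencer--Wormald) show that, for $\lambda_1$ bounded away from~$1$, the 2-core of $\cG_S \sim \bbG(n,\lambda_1/n)$ has $\Theta(n)$ vertices and average degree at least $2+\delta$ for some $\delta > 0$, hence density at least $1+\delta/2$. Taking $T^*$ to be this 2-core gives $|T^*| \in [n/u_N, n]$ and $W_{T^*}/|T^*| \geq 1+\delta/2$ with probability tending to~$1$, exceeding the threshold $\tau_N \to 1^+$.

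The hard part is the concentration step in case~(1). McDiarmid only controls deviations of order $\sqrt{n}$, so when the expectation gap $\epsilon_0 k^*/(1-\alpha)$ is of order $\sqrt{n}$ or smaller --- which can occur at the lower boundary $k^* \sim n/u_N$ with $(1-\alpha)$ bounded away from $0$ --- the bound becomes borderline. I expect the full argument to tighten this either via Talagrand's convex-distance inequality applied to the supremum $W_{k^*,S}^*$, or by replacing the max over subsets by a single random witness (for instance the output of a greedy peeling), whose density is then controlled through matched first- and second-moment bounds on a carefully designed family of candidate subsets of $\cG_S$.
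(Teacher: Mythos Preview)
Your treatment of the null hypothesis is essentially the paper's: union bound plus Chernoff, with Lemma~\ref{lem:H} giving $\binom{k}{2}H_{p_0}(2\tau/(k-1))\sim \tau k(1-\alpha)\log(N/n)$ uniformly over $k\in[n/u_N,n]$. That part is fine.

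The genuine gap is the concentration step in case~\eqref{broad1}, and it is worse than you indicate. McDiarmid with $\binom{n}{2}$ coordinates of influence $1$ controls fluctuations of order $\sqrt{\binom{n}{2}}\asymp n$, not $\sqrt{n}$. Since the gap you need to cross is $t\asymp \epsilon_0 k^*/(1-\alpha)\le \epsilon_0 n/(1-\alpha)$, the exponent $2t^2/\binom{n}{2}\asymp (k^*/n)^2\epsilon_0^2/(1-\alpha)^2$ is bounded (indeed $o(1)$ when $k^*=n/u_N$) whenever $\alpha$ stays bounded away from $1$, so the bound never tends to zero. Your own criterion ``$k^{*2}/((1-\alpha)^2 n^2)\to\infty$'' is impossible unless $\alpha\to 1$. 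The paper's fix is not Talagrand but the \emph{self-bounding} structure of $W_{k,S}^*$: deleting an edge decreases it by at most $1$, and only the edges in an optimal $T^*$ contribute, so $\sum_{e}(W_{k,S}^*-W_{k,S}^{*(e)})\le W_{k,S}^*$. Feeding this into the Boucheron--Bousquet--Lugosi--Massart moment inequalities yields $\P_S(W_{k,S}^*\le \mu-t)\le\exp(-c\,t^2/\mu)$ (Lemma~\ref{lem:concentration_Wk}), i.e.\ variance of order $\mu\asymp k^*$ rather than $n^2$. With $t\asymp \epsilon_0 k^*$ this gives $\exp(-c\,\epsilon_0^2 k^*)\to 0$, which is exactly what is needed. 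Talagrand's configuration-function inequality would also deliver this, so your instinct there is sound, but you should name the self-bounding property explicitly rather than leave it as a hope.

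For case~\eqref{broad2} the paper takes a simpler and more robust route than your $2$-core: it uses the giant component $\Cmax^S$ of $\cG_S$ directly. By standard results (\lemref{Cmax}), $|\Cmax^S|\sim(1-\eta_{\lambda_1})n$ and $W_{\Cmax^S}\sim\tfrac{\lambda_1}{2}(1-\eta_{\lambda_1}^2)n$, so $W_{\Cmax^S}/|\Cmax^S|\to \tfrac{\lambda_1}{2}(1+\eta_{\lambda_1})$, and a short calculus argument (showing $\lambda\mapsto\lambda(1+\eta_\lambda)$ is increasing on $(1,\infty)$ with limit $2$ at $\lambda=1^+$) gives $\tfrac{\lambda_1}{2}(1+\eta_{\lambda_1})>1$. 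Your $2$-core proposal is plausible for fixed $\lambda_1>1$, but you would still have to verify that its density is bounded away from $1$ (the average degree of the $2$-core can be arbitrarily close to $2$ as $\lambda_1\downarrow 1$), and Pittel--Spencer--Wormald concerns $k$-cores for $k\ge 3$, not the $2$-core. The giant-component witness avoids all of this.
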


\medskip
Note that the quantity $\sup_{k=n/u_N}^n \E_S[W_{k,S}^*]/k$ does not depend on $p_0$ or $\alpha$. 
We shall prove in the next section that the power of the broad scan test is essentially optimal: if  
\[
\lim\sup \alpha < 1 \text{ and } \lim\sup \ (1-\alpha)\sup_{k=n/u_N}^n \E_S[W_{k,S}^*]/k < 1,\] 
or $\alpha  \to 0$ and  $\lim\sup \lambda_1 < 1$, then no test is asymptotically powerful (at least when $n^2=o(N)$, so that the total degree test is powerless). 
Regarding \eqref{broad1}, we could not get a closed-form expression of this supremum. 
Nevertheless, we show in the proof that
\beq \label{broad-lb}
\liminf \sup_{k=n/u_N}^n \frac{\E_S[W_{k,S}^*]}{k} \ge \liminf \ \frac{\lambda_1}2 (1 + \eta_{\lambda_1}) \ ,
\eeq
where $\eta_\lambda$ is defined in \lemref{Cmax}. Moreover, we show in Section \ref{sec:aux} the following upper bound.
\begin{lem}\label{lem:upper_bound_expectation_wk}
\beq  \label{broad-ub}
\liminf \sup_{k=n/u_N}^n \frac{\E_S[W_{k,S}^*]}{k} \le  \liminf  \ \frac{\lambda_1}{2} + 1+ \sqrt{1+\lambda_1}  \ . 
\eeq 
If $\lambda_1\to \infty$, then \[\sup_{k=n/u_N}^n \frac{\E_S[W_{k,S}^*]}{k} \sim \lambda_1/2 \ .\]

\end{lem}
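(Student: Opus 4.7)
\textit{Proof plan.} The strategy is a union bound combined with Chernoff's inequality (\lemref{chernoff}), followed by an optimization over $k$. Let $a_k := (k-1)p_1/2 = (k-1)\lambda_1/(2n)$, so that for any $T \subset S$ with $|T|=k$, $W_T \sim \Bin(\binom{k}{2}, p_1)$ has mean $k a_k$. By a union bound over the $\binom{n}{k}$ such subsets, together with \lemref{binom}, \lemref{chernoff}, and \lemref{H} (using that $p_1$ and $2\mu/(k-1)$ both tend to $0$ so $H_{p_1}(q) = p_1 h(q/p_1)(1+o(1))$ with $h(y) = y\log y - y + 1$), one obtains, for any $\mu \ge a_k$,
\[
\P_S(W_{k,S}^* \ge k\mu) \le \exp\bigl(k[L_k - a_k h(\mu/a_k)(1+o(1))]\bigr),
\]
with $L_k := \log(en/k)$. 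Choosing $\mu_k$ as the smallest value for which $a_k h(\mu_k/a_k) \ge L_k(1+\epsilon_N)$, with $\epsilon_N \to 0$ slowly, makes this probability $o(1)$; a geometric sum on the exponential tail then yields $\E_S[W_{k,S}^*]/k \le \mu_k + o(1)$.

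The key step is to bound $\sup_k \mu_k$. Using the elementary inequality $h(y) \ge (y-1)^2/(2y)$ (valid for $y \ge 1$), the defining relation is implied by $\mu \ge \nu_k := a_k + L_k + \sqrt{L_k(L_k + 2a_k)}$, so $\mu_k \le \nu_k + o(1)$. At $k = n$, $a_n \to \lambda_1/2$ and $L_n = 1$, giving exactly $\nu_n \to \lambda_1/2 + 1 + \sqrt{1 + \lambda_1}$, the target bound.

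The main obstacle is to show $\sup_k \mu_k \le \nu_n + o(1)$, since for small $\lambda_1$ the crude envelope $\nu_k$ may exceed $\nu_n$ at some intermediate $k$. The plan is to split the range of $k$ into two regimes. First, for $k$ comparable to $n$ (so that $\mu_k/a_k$ remains bounded), a direct computation of $\partial_k \nu_k$ shows the maximum is attained at $k = n$ provided $\lambda_1$ exceeds a fixed constant; more precisely, $\partial_k\nu_k\big|_{k=n}\le 0$ reduces to $-\lambda_1/2 + 1 + 1/\sqrt{\lambda_1+1} \le 0$, which holds for $\lambda_1 \ge 3$. Second, for $k = o(n)$ where $\mu_k/a_k \to \infty$, we replace the quadratic bound by the sharper $h(y) \ge y(\log y - 1)$, valid for $y \ge e$, which yields $\mu_k \le L_k/(\log(L_k/a_k) - 1) + O(a_k)$; since $L_k/a_k \ge u_N \log u_N / \lambda_1 \to \infty$ in this range (for $\lambda_1$ bounded), we get $\mu_k = O(1)$, which is dominated by $\nu_n \ge 2$ regardless of $\lambda_1$.

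Finally, for the second assertion, when $\lambda_1 \to \infty$ the upper bound simplifies to $\nu_n \sim \lambda_1/2$; combining this with the matching lower bound \eqref{broad-lb} and the fact that $\eta_{\lambda_1} \to 0$ (Lemma~\ref{lem:Cmax}) yields $\sup_k \E_S[W_{k,S}^*]/k \sim \lambda_1/2$. The hardest part of the proof is the case analysis above, arising from the fact that neither the quadratic nor the log-linear Chernoff bound is uniformly tight across the full range of $(a_k, L_k)$; a cleaner unified estimate on the implicit function $\mu^*(a,L)$ solving $a h(\mu/a) = L$ would be desirable, but the two-regime split is sufficient.
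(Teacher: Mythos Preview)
Your overall strategy (union bound, Chernoff, convert a uniform tail bound to an expectation bound) is the same as the paper's, but your optimization over $k$ has a genuine gap. Your two regimes are (i) $k$ comparable to $n$ with $\lambda_1\ge 3$, where you argue $\nu_k$ is maximized at $k=n$; and (ii) $k=o(n)$ with $\lambda_1$ bounded, where the log-linear bound gives $\mu_k=O(1)$. These do not cover $k=cn$ with $c\in(0,1)$ fixed and $\lambda_1\in(0,3)$ fixed: here $\mu_k/a_k$ need not be large, so the log-linear bound is not sharp, yet your derivative argument for $\nu_k$ does not apply. Concretely, for $\lambda_1$ small and $k=n/2$ one has $\nu_k\approx 2\log(2e)>\nu_n\approx 2$, so your quadratic envelope fails there and you have offered no replacement. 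Also, writing ``$\mu_k=O(1)$, which is dominated by $\nu_n\ge 2$'' is not a proof: $O(1)$ does not mean $\le 2$.

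The paper sidesteps this difficulty by \emph{not} optimizing $\mu_k$ in $k$. It fixes a single threshold $\mu=(\lambda_1+x)/2$ with $x=2[(1+\epsilon)+\sqrt{(1+\epsilon)^2+\lambda_1(1+\epsilon)}]$ (so $\mu\to\nu_n$ as $\epsilon\to0$) and shows directly that the exponent $A_k:=\log(en/k)-\tfrac{k-1}{2}H_{p_1}\bigl(\tfrac{\lambda_1+x}{k-1}\bigr)$ satisfies $A_k\le -\epsilon$ for \emph{every} $k$. The key algebraic point is that $x\ge 2$, so the coefficient of $\log(n/k)$ can be absorbed: writing $t=(k-1)/n\in(0,1]$, one bounds $A_k\le 1+\tfrac{x}{2}-\tfrac{\lambda_1+x}{2}\log\tfrac{\lambda_1+x}{\lambda_1}-\tfrac{\lambda_1}{2}(t-1-\log t)$, and the last bracket is nonnegative by convexity. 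A second elementary inequality then gives $A_k\le 1-\tfrac{x^2}{4(\lambda_1+x)}=-\epsilon$ by the choice of $x$. This is exactly the ``cleaner unified estimate'' you wished for; it replaces your two-regime split. The paper then converts the uniform deviation bound to an expectation bound using the lower-tail concentration of \lemref{concentration_Wk} (rather than a tail integral), and for the second assertion uses the trivial lower bound $\E_S[W_{n,S}^*]/n=(n-1)p_1/2\sim\lambda_1/2$ rather than \eqref{broad-lb}, which was stated only for fixed $\lambda_1>1$.
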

Hence, assuming $\alpha$ and $\lambda_1$ are fixed and positive , the broad scan test is asymptotically powerful when $(1-\alpha) \frac{\lambda_1}2 (1 + \eta_{\lambda_1}) > 1$. In contrast, the scan test was proved to be asymptotically powerful when $(1-\alpha)\frac{\lambda_1}{2}>1$ \citep[Prop.~3]{subgragh_detection}, so that we have improved the bound by a factor larger than  $1+\eta_{\lambda_1}$ and smaller than  $1+2\lambda_1^{-1}(1+ \sqrt{1+\lambda_1})$. When $\alpha$ converges to one, it was proved in \citep{subgragh_detection} that the minimax detection boundary corresponds to $(1-\alpha)\lambda_1/2\sim 1$ (at least when $n^2=o(N)$). Thus, for $\alpha$ going to one, both the broad scan test and the scan test have comparable power and are essentially optimal. In the dense case, the broad scan test and the scan test have also comparable powers as shown by the next result which is the counterpart of \citep[Prop.~3]{subgragh_detection}.

\begin{prp} \label{prp:scan_broad}
Assume that $p_0$ is bounded away from one. 
The broad scan test is powerful if 
\[\liminf \frac{n H(p_1)}{2 \log (N/n)} > 1\ .\]
\end{prp}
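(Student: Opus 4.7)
Since $W_n^\ddag \ge W_n^*/n$ pointwise, the broad scan test at threshold $t$ inherits any power of the standard scan test at threshold $tn$. My plan is therefore to mirror the proof of Proposition~3 in \citep{subgragh_detection} for the standard scan, adding one new ingredient: a union bound over the at most $n = N^{o(1)}$ integer values of $k \in [n/u_N, n]$ for the null control.

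I would fix a small $\eps > 0$ and analyze the test that rejects when $W_n^\ddag \ge t := (1-\eps)(n-1) p_1/2$. For the \emph{alternative}, under $H_S$ one has $W_n^\ddag \ge W_S/n$ with $W_S \sim \Bin(\tbinom{n}{2}, p_1)$; the standing assumption $n^2 p_1 \to \infty$ yields $W_S/\tbinom{n}{2} \to p_1$ in probability, so $W_n^\ddag \ge t$ with probability tending to one. For the \emph{null}, I would apply a union bound
\beq
\P_0\bigl(W_n^\ddag \ge t\bigr) \le \sum_{k=\lceil n/u_N\rceil}^{n} \P_0\bigl(W_k^* \ge tk\bigr),
\eeq
and, for each $k$, combine Lemma \ref{lem:chernoff} with Lemma \ref{lem:binom} to obtain
\beq
\P_0\bigl(W_k^* \ge tk\bigr) \le \exp\!\Bigl(k\log(eN/k) - \tbinom{k}{2} H(q_k)\Bigr), \qquad q_k := \frac{2t}{k-1} = (1-\eps) p_1 \frac{n-1}{k-1}.
\eeq
If $q_k > 1$ the probability is in fact zero, since then $tk > \tbinom{k}{2} \ge W_k^*$. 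Otherwise, as $p_1 > p_0$ and $H = H_{p_0}$ is strictly increasing on $[p_0, 1]$, one has $H(q_k) \ge H(q_n) = H((1-\eps) p_1)$. The hypothesis $\liminf nH(p_1)/(2\log(N/n)) > 1$ together with continuity of $H$ at $p_1$ allows me to pick $\eps$ small enough that at $k = n$ the exponent $n[\log(eN/n) - \tfrac12 (n-1) H((1-\eps) p_1)]$ diverges to $-\infty$ at rate $\asymp \log(N/n)$; a monotonicity check would then extend this bound to every $k$ in the range.

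The main technical obstacle will be exactly this uniform null control: showing that $k \log(eN/k) - \tbinom{k}{2} H(q_k) \to -\infty$ uniformly over $k \in [n/u_N, n]$ at a rate beating the $N^{o(1)}$ loss from the union bound. The ratio $(k-1) H(q_k) / \log(eN/k)$ becomes manageable once $H$ is expanded explicitly --- using $H(q) \approx q \log(q/p_0)$ when $p_0$ is small, and the crude bound $H(q) \le \log(1/p_0)$ near $q = 1$ --- but the boundary case $q_k \uparrow 1$ and the degenerate regime $k < 2t + 1$ (where $W_k^*/k$ trivially cannot reach $t$) should probably be isolated and handled separately. Once they are, the slack built into $\eps$ absorbs the logarithmic overhead from the union bound and yields the result.
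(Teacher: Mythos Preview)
Your approach matches the paper's: since the paper merely states that the proof ``is essentially the same as the corresponding result for the scan test itself'' from \citep{subgragh_detection}, your plan---mirroring that argument for the alternative via $W_n^\ddag \ge W_S/n$, and adding a union bound over $k\in[n/u_N,n]$ for the null---is exactly the intended route.

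One refinement on the null side: the bound $H(q_k)\ge H(q_n)$ you write down is correct but, by itself, too weak---plugging it into $k\log(eN/k)-\tbinom{k}{2}H(q_n)$ gives an exponent that can become \emph{positive} near $k\approx n/u_N$, as you implicitly recognize in your final paragraph. The clean fix is not the expansion $H(q)\approx q\log(q/p_0)$ (which forces the case analysis you anticipate) but the stronger monotonicity
\[
(k-1)\,H\!\left(\frac{2t}{k-1}\right)\ \ge\ (n-1)\,H\!\left(\frac{2t}{n-1}\right)=(n-1)\,H\bigl((1-\eps)p_1\bigr)
\quad\text{for all }k\le n\text{ with }q_k\le 1.
\]
This holds because $m\mapsto mH(c/m)$ is decreasing on $\{m:c/m>p_0\}$: differentiating gives $H(c/m)-(c/m)H'(c/m)=\log\frac{1-c/m}{1-p_0}<0$. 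Combined with $\log(eN/k)\le 1+\log(N/n)+\log u_N$, this yields $\log(eN/k)-\tfrac{k-1}{2}H(q_k)\le -c'\log(N/n)$ uniformly in $k$, after which the union bound over at most $n$ values of $k$ is harmless. This single convexity observation replaces the boundary and degenerate-case analysis you were anticipating.
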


The proof is essentially the same as the corresponding result for the scan test itself.  See \citep{subgragh_detection}.

\begin{proof}[Proof of \thmref{broad}]
First, we control $W_n^\ddag$ under the null hypothesis.
For any positive constant $c_0>0$, we shall prove that 
\begin{eqnarray}\label{eq:controlH0}
 \mathbb{P}_0\left[(1-\alpha)W_n^\ddag\geq 1+c_0\right] = o(1) \ . 
\end{eqnarray}
Under Conditions \eqref{broad1} and \eqref{broad2}, $\alpha$ is smaller than for $N$ large enough. 
Consider any integer $k \in [n/u_N, n]$, and let $q_k=2(1+c_0)/[(k-1)(1-\alpha)]$. 
Recall that $\kk = k(k-1)/2$.
Applying a union bound and Chernoff's bound (\lemref{chernoff}), we derive that
\beqn
\P_0\left[W_k^*\geq \frac{1+c_0}{1-\alpha}k\right]
&\leq& \binom{N}{k}\exp\left[-k^{(2)}H(q_k)\right] \\
&\leq& \exp\left[k \big\{\log(eN/k)-\frac{k-1}{2}H(q_k) \big\} \right]\ .
\eeqn
We apply \lemref{H} knowing that $q_k/p_0\rightarrow \infty$, and use the definition of $\alpha$ in \eqref{a}, to control the entropy as follows
\beqn
\frac{k-1}{2}H(q_k)
&\sim& \frac{k-1}{2} q_k \log\left[\frac{q_k}{p_0}\right] \\
&=& \frac{1+c_0}{1-\alpha} \big[\log(N/n) - \log \lambda_0 + O(\log u_N) \big] \\
&\sim& (1+c_0) \log(N/n) \ ,
\eeqn
since $\log(u_N)=o(\log(N/n))$.   
 Consequently, 
\[\P_0\left[W_k^*\geq \frac{1+c_0}{1-\alpha}k\right]\leq \exp\left[-kc_0\log(N/n) (1+o(1))\right]\ ,\]
where the $o(1)$ is uniform with respect to $k$. Applying a union bound, we conclude that 
\begin{eqnarray*}
 \mathbb{P}_0\left[(1-\alpha)W_n^\ddag\geq 1+c_0\right]\leq \sum_{k=n/u_N}^n \exp\left[-kc_0\log(N/n)(1+o(1))\right]= o(1)\ .
\end{eqnarray*}

We now lower bound $W_n^\ddag$ under the alternative hypothesis.  First, assume that \eqref{broad1} holds, so that there exists a positive constant $c$ and a sequence of integers $k_n \ge n/u_N$ such that $\E_S[W_{k_n,S}^*]\geq k_n(1+c)/(1-\alpha)$ eventually.  In particular, $\E_S[W_{k_n,S}^*] \to \infty$.  
We then use \eqref{Wk-lb} in the following concentration result for $W_{k, S}^*$.

\begin{lem}\label{lem:concentration_Wk}
For an integer $0 \le k \leq n$, define $\mu^*_{k,S} = \E_S[W^*_{k,S}]$.  We have the following deviation inequalities
\beq\label{Wk-ub}
\mathbb{P}_S\left[W^*_{k,S}\geq \mu^*_{k,S} + t\right] \leq \exp\left[-\frac{\log(2)}{4}\left\{t\wedge \frac{t^2}{8 \mu^*_{k,S}}\right\}\right]\ ,  
\quad \forall t>8\left(1\vee \sqrt{\mu^*_{k,S}}\right)\ ;
\eeq
\beq\label{Wk-lb}
\mathbb{P}_S\left[W^*_{k,S}\leq \mu^*_{k,S} - t\right] \leq \exp\left[-\log(2)\frac{t^2}{8 \mu^*_{k,S}}\right]\ , 
\quad \forall t>4 \sqrt{\mu^*_{k,S}}\ .
\eeq
\end{lem}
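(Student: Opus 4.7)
The plan is to recognize $W^*_{k,S}$ as a self-bounding function of the independent Bernoulli edge variables and invoke the Boucheron--Lugosi--Massart concentration inequality for such functions; this captures the fact that the variance of $Z := W^*_{k,S}$ is of order $\mu^*_{k,S}$, rather than of order $\nn$ as a naive McDiarmid bound would suggest.

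Under $\P_S$, the edge variables $\{W_{i,j} : i<j,\ i,j \in S\}$ are i.i.d.\ ${\rm Bern}(p_1)$, so $Z$ is a function of $\nn$ independent Bernoulli variables. For each edge $e$ in $S$, define $Z^{(e)} := W^*_{k,S}\big|_{W_e = 0}$, which depends on all coordinates except $W_e$. Since flipping one edge changes $W_T$ by at most one for every fixed $T$, and $Z$ is a maximum of such $W_T$'s, we have $0 \le Z - Z^{(e)} \le 1$. For the crucial aggregation bound $\sum_e (Z - Z^{(e)}) \le Z$, let $T^\star$ be any optimizer with $W_{T^\star} = Z$ (say, the lexicographically smallest). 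Edges with $W_e = 0$ contribute nothing. If $W_e = 1$ but $e$ is not an edge of the subgraph induced by $T^\star$, then $W_{T^\star}$ is unchanged when $W_e$ is set to $0$, so $Z^{(e)} \ge W_{T^\star} = Z$, forcing $Z - Z^{(e)} = 0$. Hence the only edges contributing to the sum are those that are present and lie inside $T^\star$, of which there are exactly $W_{T^\star} = Z$. This establishes self-boundedness.

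Applying the Boucheron--Lugosi--Massart concentration inequality with $\mu := \mu^*_{k,S}$ yields
\[
\P_S[Z \ge \mu + t] \le \exp\!\Bigl(-\frac{t^2}{2\mu + 2t/3}\Bigr), \qquad \P_S[Z \le \mu - t] \le \exp\!\Bigl(-\frac{t^2}{2\mu}\Bigr).
\]
It then suffices to convert these bounds to the form stated in the lemma. For the upper tail, split at $t = 8\mu$: if $t \le 8\mu$ then $2\mu + 2t/3 \le (22/3)\mu$, so the exponent is at least $3t^2/(22\mu) \ge (\log 2)\, t^2/(32\mu)$; if $t > 8\mu$ then $2\mu + 2t/3 \le (11/12)t$, so the exponent is at least $(12/11)t \ge (\log 2)\, t/4$. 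The two cases combine to give $\exp\{-(\log 2)/4 \cdot [t \wedge t^2/(8\mu)]\}$, which matches \eqref{Wk-ub}; the restriction $t > 8(1 \vee \sqrt{\mu})$ only ensures that the bound is nontrivial. An analogous simplification of the lower-tail inequality handles \eqref{Wk-lb}.

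The only genuinely nontrivial step is verifying the self-bounding property --- and within it, the aggregation bound $\sum_e(Z - Z^{(e)}) \le Z$ --- which hinges on the fact that setting a non-contributing edge to zero never alters a fixed optimizer. Once self-boundedness is in hand, the BLM inequality is applied as a black box and the conversion to the stated constants is a routine calculation.
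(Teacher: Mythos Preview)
Your proof is correct and follows essentially the same route as the paper: both verify the self-bounding property of $W^*_{k,S}$ via the identical argument (bounded differences and $\sum_e (Z - Z^{(e)}) \le Z$ using an optimizer $T^\star$), then invoke the Boucheron--Lugosi--Massart framework. The only minor difference is that the paper applies the moment-bound form of \cite{MR2123200} (Corollary~1) and then uses Markov's inequality with an optimized choice of $q$, whereas you use the exponential self-bounding inequality directly; both routes yield the stated bounds with room to spare in the constants.
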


It follows from \lemref{concentration_Wk} that, with probability going to one under $\P_S$, 
\[W_n^\ddag\geq \frac{W^*_{k_n}}{k_n}\geq \frac{W^*_{k_n,S}}{k_n}\geq  \frac{1+c/2}{1-\alpha}\ .\]
Taking $c_0= c/4$ in \eqref{eq:controlH0} allows us to conclude that the test based on $W_n^\ddag$ with threshold $\frac{1+c/2}{1-\alpha}$ is asymptotically powerful.

\bigskip
Now, assume that \eqref{broad2} holds.  Because $W_n^\ddag$ is stochastically increasing in $\lambda_1$ under $\P_S$, we may assume that $\lambda_1 > 1$ is fixed.  We use a different strategy which amounts to scanning the largest connected component of $\cG_S$.  Let $\Cmax^S$ be a largest connected component of $\cG_S$.

For a small $c > 0$ to be chosen later, assume that $(1-c)n(1-\eta_{\lambda_1}) \le  |\Cmax^S| \le (1+c)n(1-\eta_{\lambda_1})$ and $W_{\Cmax^S} \ge (1-c) \frac{n \lambda_1}2 (1 - \eta_{\lambda_1}^2)$, which happens with high probability under $\P_S$ by \lemref{Cmax}. 
Note that, because $\lambda_1 > 1$, we have $\eta_{\lambda_1} < 1$, and therefore $|\Cmax^S| \asymp n$.  Consequently, when computing $W_n^\ddag$ we scan $\Cmax^S$, implying that
\[
W_n^\ddag 
\ge \frac{W_{\Cmax^S}}{|\Cmax^S|}
\geq \frac{(1-c)  \frac{\lambda_1}2 (1- \eta_{\lambda_1}^2) n }{(1+c)(1-\eta_{\lambda_1})n} 
\ge \frac{1-c}{1+c} \, \frac{\lambda_1}2 (1+\eta_{\lambda_1}). 
\label{eq:lower_bound_Wk}
\]
Since $c$ above may be taken as small as we wish, and in view of \eqref{eq:controlH0}, it suffices to show that 
$\lambda_1(1+\eta_{\lambda_1})> 2\ .$
Since $\eta_{\lambda}$ converges to one when $\lambda$ goes to one, we have $\lim_{\lambda\rightarrow 1}\lambda(1+\eta_{\lambda})=2$. Consequently, it suffices to show that the function $f:\lambda \mapsto \lambda(1+\eta_{\lambda})$ is increasing on $(1,\infty)$. By definition of $\eta_{\lambda}$, we have $\eta_{\lambda}<1/\lambda$ (since $e^{-\lambda}<1/\lambda$) and  $\eta'(\lambda)=\eta_{\lambda}(\eta_{\lambda}-1)/(1-\lambda\eta_{\lambda})$. 
Consequently, $f'(\lambda)= 2+ \frac{\eta_{\lambda}-1}{1-\lambda\eta_{\lambda}}$. Hence, $f'(\lambda)$ is positive if $\eta_{\lambda}<(2\lambda-1)^{-1}:=a_{\lambda}$. Recall that $\eta_{\lambda}$ is the smallest solution of the equation $x=\exp[\lambda(x-1)]$, the largest solution being $x=1$. Furthermore, we have $x\geq \exp[\lambda(x-1)]$ for any $x\in[\eta_{\lambda},1]$. 
To conclude, it suffices to prove $a_{\lambda}>e^{\lambda(a_{\lambda}-1)}$.  This last bound is equivalent to 
\[\lambda-\frac{1}{2}-\frac{1}{2(2\lambda-1)}-\log(2\lambda-1)>0\ .\]
The function on the LHS is null for $\lambda=1$. Furthermore, its derivative $\frac{4(\lambda-1)^2}{(2\lambda-1)^2}$
is positive for $\lambda>1$, which allows us to conclude.
\end{proof}

\begin{proof}[Proof of \lemref{concentration_Wk}]
The proof is based on moment bounds for functions of independent random variables due to \cite{MR2123200} that generalize the Efron-Stein inequality.

Recall that $\cG_S = (S, \cE_S)$ is the subgraph induced by $S$.
Fix some integer $k \in [0,n]$. For any $(i, j) \in \cE_S$, define the graph $\mathcal{G}_S^{(i,j)}$ by removing $(i,j)$ from the edge set of $\mathcal{G}_S$.  Let $W_{T}^{(i,j)}$ be defined as $W_{T}$ but computed on $\mathcal{G}_S^{(i,j)}$, and then let $W_{k,S}^{* (i,j)} = \max_{T \subset S, |T| = k} W_{T}^{(i,j)}$.  Observe that $0\leq W_{k,S}^{*}- W_{k,S}^{*(i,j)}\leq 1 $ and that $W_{k,S}^{*(i,j)}$ is a measurable function of $\cE_S^{(i,j)}$, the edges set of $\cG_S^{(i,j)}$.
Let $T^* \subset S$ be a subset of size $k$ such that $W_{k,S}^{*}=W_{T^*}$.  Then, we have
\[\sum_{(i,j) \in \cE_S} \big(W_{k,S}^{*}- W_{k,S}^{*(i,j)} \big) \leq \sum_{(i,j) \in \cE_S} \big(W_{T^*}- W_{T^*}^{(i,j)} \big) =W_{T^*}= W_{k,S}^{*}\ ,\]
where the first equality comes from the fact that $W_{T^*}- W_{T^*}^{(i,j)} = \IND{(i,j) \in \cE_T}$. 

Applying \citep[Cor.~1]{MR2123200}, we derive that, for any real $q\geq 2$,
\begin{eqnarray*}
 \left[\E_S\left\{\left(W^*_{k,S}-\E_S[W^*_{k,S}]\right)_+^q\right\}\right]^{1/q}&\leq& \sqrt{2q\E_S[W^*_{k,S}]} + q \ ;\\
\left[\E_S\left\{\left(W^*_{k,S}-\E_S[W^*_{k,S}]\right)_-^q\right\}\right]^{1/q}&\leq& \sqrt{2q\E_S[W^*_{k,S}]} \ .
\end{eqnarray*}
Take some $t>8(1\vee \sqrt{\E_S[W^*_{k,S}]})$. For any $q\geq 2$, we have by Markov's inequality
\[ \mathbb{P}_S\left[W^*_{k,S}\geq \E_S[W^*_{k,S}]+ t\right]\leq \left(\frac{ \sqrt{2q\E_S[W^*_{k,S}]} + q}{t}\right)^q\ . \]
The choice $q= \frac{t}{4}\wedge \frac{t^2}{32\E_S[W^*_{k,S}]}$ is larger than $2$ and leads to \eqref{Wk-ub}. 
Similarly, if take some $t>4 \sqrt{\E_S[W^*_{k,S}]}$, and apply Markov's inequality, we get
\[ \mathbb{P}_S\left[W^*_{k,S}\le \E_S[W^*_{k,S}] - t\right]\leq \left(\frac{ \sqrt{2q\E_S[W^*_{k,S}]}}{t}\right)^q\ . \]
The choice $q=\frac{t^2}{8\E_S[W^*_{k,S}]} \ge 2$ leads to \eqref{Wk-lb}.
\end{proof}

\subsection{The largest connected component} \label{sec:CC}

This test rejects for large values of the size (number of nodes) of the largest connected component in $\cG$, which we denoted $\Cmax$.  

\subsubsection{Subcritical regime} 
We first study that test in the subcritical regime where $\limsup \lambda_0 < 1$.
Define 
\beq \label{I}
I_{\lambda}= \lambda -1 - \log(\lambda)\ .
\eeq

\begin{thm}[Subcritical largest connected component test]\label{thm:CC-sub}
Assume that $\log\log(N)=o(\log n)$, $\lim\sup \lambda_0 <1$, and $I^{-1}_{\lambda_0}\log(N)\rightarrow \infty$.
  The largest connected  component test is asymptotically powerful when $\lim\inf \lambda_1>1$ or
\beq\label{eq:hyp_CC-sub}
\lambda_0\leq \lambda_1e^{1-\lambda_1}\text{ for $n$ large enough}\quad \text{ and }\quad  \lim \inf \ \frac{I_{\lambda_0}}{\lambda_0+ I_{\lambda_1}-\lambda_0e^{I_{\lambda_1}}}\frac{\log(n)}{\log(N)}> 1\ . 
\eeq
If we further assume that $n^2=o(N)$, then the largest connected component test is asymptotically powerless  when $\lambda_1<1$ for all $n$ and 
\beq\label{eq:hyp_CC-sub-min}
 \lambda_0\geq \lambda_1e^{1-\lambda_1}\text{ for $n$ large enough}\quad \text{ or }\quad  \lim \sup \ \frac{I_{\lambda_0}}{\lambda_0+ I_{\lambda_1}-\lambda_0e^{I_{\lambda_1}}}\frac{\log(n)}{\log(N)}< 1\ . 
\eeq
\end{thm}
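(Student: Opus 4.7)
The plan is to match the sharp exponential tail of $|C_v|$, the connected component of a vertex $v$, under each hypothesis, against the classical subcritical asymptotics of $|\Cmax|$ under $H_0$. Under $H_0$, $\cG\sim\bbG(N,\lambda_0/N)$ with $\limsup\lambda_0<1$; classical subcritical Erd\"os--R\'enyi theory yields $|\Cmax|\sim I_{\lambda_0}^{-1}\log N$ in probability and $\P_0(|C_v|\ge\ell)\asymp \ell^{-3/2}e^{-I_{\lambda_0}\ell}$ for any fixed $v$. I would set the critical threshold at $t_N=(1+\varepsilon)I_{\lambda_0}^{-1}\log N$, which automatically controls the Type I error at $o(1)$.

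The key step is identifying the exponential tail of $|C_v|$ for $v\in S$ under $H_S$. Since $n/N\to 0$, exploring $C_v$ is well-approximated by a two-type Galton--Watson process: an $S$-vertex produces $\mathrm{Poi}(\lambda_1)$ children inside $S$ and $\mathrm{Poi}(\lambda_0)$ children outside, while an outside vertex produces $\mathrm{Poi}(\lambda_0)$ outside children (returns into $S$ being negligible). Lumping each inside vertex with the subcritical outside dendrite it spawns, one has $|C_v|\stackrel{d}{=}\sum_{i=1}^X W_i$, where $X\sim\mathrm{Borel}(\lambda_1)$ is the total progeny of the inside process and the $W_i$ are i.i.d.\ $\mathrm{Borel}(\lambda_0)$. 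Their MGFs satisfy $\log M_X(s)=s+\lambda_1(M_X(s)-1)$ and $\log M_W(\theta)=\theta+\lambda_0(M_W(\theta)-1)$, hence $\log M_{|C_v|}(\theta)=\log M_X(\log M_W(\theta))$. Maximising $\theta$ subject to the two radii of convergence shows that, when $\lambda_0 e^{I_{\lambda_1}}\le 1$, the binding constraint is $\log M_W(\theta)\le I_{\lambda_1}$, giving the exponential rate $c:=\lambda_0+I_{\lambda_1}-\lambda_0 e^{I_{\lambda_1}}$; otherwise the constraint on $M_W$ itself binds and the rate reverts to $I_{\lambda_0}$. A matching lower bound $\P_S(|C_v|\ge\ell)\ge e^{-c\ell(1+o(1))}$ is obtained by pinpointing the optimal type composition of a large cluster.

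For the powerful direction, the case $\liminf\lambda_1>1$ is immediate from \lemref{Cmax} applied to $\cG_S\sim\bbG(n,\lambda_1/n)$: the giant has size $\sim(1-\eta_{\lambda_1})n\gg \log N$ by \eqref{asymp}. Under \eqref{eq:hyp_CC-sub}, I would explore $S$-vertex clusters sequentially: pick an unvisited $v\in S$, reveal $C_v$, stop if $|C_v|\ge t_N$, else mark $C_v$ as used and repeat. Each failed attempt consumes at most $t_N$ vertices of $S$, so one can afford $\ge n/(2t_N)$ trials; at each trial the residual graph is close enough to the original that the conditional success probability is still $\ge e^{-ct_N(1+o(1))}$; and $\liminf(I_{\lambda_0}/c)(\log n/\log N)>1$ yields $(n/t_N)e^{-ct_N(1+o(1))}\to\infty$, so the procedure succeeds with high probability and $|\Cmax|\ge t_N$.

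For the powerless direction, assuming \eqref{eq:hyp_CC-sub-min} and $n^2=o(N)$, set $t_N'=(1+\varepsilon)I_{\lambda_0}^{-1}\log N$ and union-bound over all vertices:
\[\P_S(|\Cmax|>t_N')\le N\,e^{-I_{\lambda_0}t_N'(1+o(1))}+n\,e^{-c^\star t_N'(1+o(1))},\]
with $c^\star=c$ when $\lambda_0 e^{I_{\lambda_1}}<1$ and $c^\star=I_{\lambda_0}$ otherwise. The first term is $N^{-\varepsilon/2}$; the second vanishes under \eqref{eq:hyp_CC-sub-min} because either $c^\star=I_{\lambda_0}$ with $n\ll N$, or $\limsup(I_{\lambda_0}/c)(\log n/\log N)<1$. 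Combined with null concentration, no threshold separates the two hypotheses. The condition $n^2=o(N)$ is used to argue that excursions of outside-started clusters into $S$ are negligible (the expected number of $S$-vertices visited by a cluster of size $\ell$ is $\asymp \ell n/N = o(1)$ for $\ell\asymp\log N$). The main obstacle is the matching large-deviation lower bound on $\P_S(|C_v|\ge\ell)$: the upper bound is a direct Chernoff/compound-MGF estimate, but the lower bound demands an explicit probabilistic construction of a cluster with the extremal type composition, which is the technical heart of the proof.
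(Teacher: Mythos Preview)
Your identification of the exponential rate $c=\lambda_0+I_{\lambda_1}-\lambda_0 e^{I_{\lambda_1}}$ via the compound--Borel moment generating function is correct and is an elegant alternative to the paper's constructive derivation. The paper instead fixes explicitly the optimal pair $(q,k)$ --- with $q$ the number of $S$-vertices in the cluster and $k$ its total size --- and verifies the rate by direct branching-process estimates on total progenies (their lemmas on stochastic domination and the hitting-time formula). Your MGF calculus reaches the same answer more transparently and correctly recovers the switchover condition $\lambda_0 e^{I_{\lambda_1}}\le 1$, i.e.\ $\lambda_0\le\lambda_1 e^{1-\lambda_1}$, as the point where the inner constraint $\log M_W\le I_{\lambda_1}$ binds before the outer one.

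For the powerful direction, your sequential-exploration scheme is a legitimate alternative to the paper's second-moment method. The paper first shows (by a second-moment argument inside $\cG_S$) that there are at least $n^{1-o(1)}e^{-qI_{\lambda_1}}$ connected components of $\cG_S$ of size $\ge q$, extracts a size-$q$ subtree from each, and then runs a second second-moment argument on the number $V$ of those subtrees whose extension through $\cG_{-S}$ reaches total size $\ge k$. Both routes hinge on the large-deviation \emph{lower} bound $\P_S(|C_v|\ge\ell)\ge e^{-c\ell(1+o(1))}$, which you rightly flag as the technical heart; the paper proves it explicitly from the exact law of the total progeny rather than by a tilted-measure argument.

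There is, however, a genuine gap in your powerless direction. Showing that $|\Cmax|$ concentrates around $I_{\lambda_0}^{-1}\log N$ under both $\P_0$ and $\P_S$ does \emph{not} imply the test is asymptotically powerless: ``powerless'' means the risk tends to $1$ for \emph{every} threshold sequence, and a threshold placed at the scale $I_{\lambda_0}^{-1}\log N$ could still exploit second-order differences between the two distributions. What is needed is a total-variation statement. The paper obtains it by proving that, with $\P_S$-probability tending to one, no connected component of $\cG$ containing an $S$-vertex has size $\ge(1-c)^{1/2}I_{\lambda_0}^{-1}\log N$, whereas $|\Cmax|$ does exceed this value; hence the largest component is disjoint from $S$. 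Under the natural monotone coupling (independently delete each $\cG_S$-edge with probability $1-p_0/p_1$ to pass from $\P_S$ to $\P_0$), this component is therefore untouched, so $|\Cmax|$ coincides under the two measures on a high-probability event. Your union bound yields only the weaker conclusion $\P_S\big(|\Cmax|>(1+\varepsilon)I_{\lambda_0}^{-1}\log N\big)\to 0$, which leaves open the possibility of a powerful threshold at $(1+o(1))I_{\lambda_0}^{-1}\log N$; the disjoint-from-$S$ step (together with stochastic domination) is what closes that door.
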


If we  assume that both $\lambda_0$ and $\lambda_1$ go to zero, then Condition \eqref{eq:hyp_CC-sub} is equivalent to 
\beq \label{eq:hyp_CC-sub_lambda_small}
\lim \inf \ \frac{I_{\lambda_0}}{I_{\lambda_1}}\frac{\log(n)}{\log(N)}> 1\ ,
\eeq
which corresponds to the optimal detection boundary in this setting, as shown in \thmref{lower}.

The technical hypothesis $\log\log(N)=o(\log n)$ is only used for convenience when analyzing the critical behavior $\lambda_1\rightarrow 1$.  The condition $I^{-1}_{\lambda_0}\log(N)\rightarrow \infty$ implies that $\lambda_0$ can only converge to zero slower than any power of $N$.  Although it is possible to analyze the test in the very sparse setting where $\lambda_0$ goes to zero polynomially fast, we did not do so to keep the exposition focused on the more interesting regimes.

\begin{proof}[Proof of \thmref{CC-sub}]
That the test is powerful when $\liminf \lambda_1 > 1$ derives from the well-known phase transition phenomenon of Erd\"os-R\'enyi graphs.  
Let $\cC_m$ denote a largest connected component of $\bbG(m, \lambda/m)$ and assume that $\lambda \in (0, \infty)$ is fixed.  By  \citep[Th.~4.8, Th.~4.4, Th.~4.5]{remco:lecture} in probability, we know that 
\[
|\cC_m| \sim 
\begin{cases}
I_\lambda^{-1} \log m, & \text{if } \lambda < 1 \ ; \\
(1-\eta_\lambda) m, & \text{if } \lambda > 1 \ ,
\end{cases} 
\]
where $\eta_{\lambda}$ is defined as in Lemma \ref{lem:Cmax}. When $\lambda > 1$,  the result is actually contained in \lemref{Cmax}. 

Hence, under the null with $\limsup \lambda_0 < 1$, the largest connected component of $\cG$ is of order $\log(N)$ with probability going to one.  Under the alternative $H_S$ with $\lim\inf \lambda_1 >1$, the graph $\mathcal{G}_S$ contains a giant connected component whose size of order $n$ with probability going to one.  Recalling that $\log(N)=o(n)$ allows us to conclude. 

\medskip

Now suppose that \eqref{eq:hyp_CC-sub} holds. 
We assume that the sequence $\lambda_1$ is always smaller or equal to $1$, that $I_{\lambda_1}^{-1}=O\left( \log(n)/\log(N)\right)$  and that $\log(I_{\lambda_1}^{-1} \vee 1)=o(\log n)$, meaning that $\lambda_1$ does not converge too fast to 1.  We may do so while keeping Condition \eqref{eq:hyp_CC-sub} true because the distribution of $|\Cmax|$ under $\P_S$ is stochastically increasing with $\lambda_1$, because 
$\lim\sup \lambda_0<1$, $I_{\lambda_1}+\lambda_0-\lambda_0e^{I_{\lambda_1}}\sim I_{\lambda_1}(1-\lambda_0)$ for $\lambda_1\to 1$, and because $\log\log(N)=o(\log n)$.

By hypothesis \eqref{eq:hyp_CC-sub}, there exists a constant $c'>0$, such that 
\[
\tau := \lim \inf\frac{I_{\lambda_0}\log(n)}{(I_{\lambda_1}+\lambda_0-\lambda_0e^{I_{\lambda_1}})\log(N)}\geq 1+c' \ .
\]

To upper-bound the size of $\Cmax$ under $\P_0$, we use the following. 
 
\begin{lem}\label{lem:subcritic_cluster_upper}
Let $\cC_m$ denote a largest connected component of $\bbG(m, \lambda/m)$ and assume that $\lambda < 1$ for all $m$ and $\log[I_{\lambda}^{-1}\vee 1]=o(\log(m))$.  Then, for any sequence $u_m$ satisfying 
\[\lim\inf \frac{u_m I_{\lambda}}{\log m}>1\ ,\]
we have
\[ \mathbb{P}(|\mathcal{C}_m|\geq u_m)=o(1)\ .\]
\end{lem}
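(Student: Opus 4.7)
The plan is to combine a union bound over starting vertices with a classical branching-process domination for the size of a single cluster. Write $\cC(v)$ for the connected component containing vertex $v$ in $\bbG(m,\lambda/m)$. A union bound gives
\[
\P(|\cC_m| \geq u_m) \leq m \, \P(|\cC(1)| \geq u_m),
\]
so it suffices to show $\P(|\cC(1)|\geq u_m) = o(1/m)$.

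To bound $\P(|\cC(1)|\geq k)$, I would run a breadth-first exploration of $\cC(1)$. At each step, the number of newly revealed neighbors of the current vertex is stochastically dominated by an independent $\Bin(m-1,\lambda/m)$ variable (pad the unexplored set up to size $m-1$ if necessary). If the cluster contains at least $k$ vertices, then in the first $k$ exploration steps the process must reveal at least $k-1$ new vertices, and summing the dominating binomials gives
\[
\P(|\cC(1)|\geq k) \leq \P\bigl(\Bin(k(m-1),\lambda/m)\geq k-1\bigr).
\]
Now apply Chernoff's bound (\lemref{chernoff}): the right-hand side is at most $\exp\bigl(-k(m-1)\, H_{\lambda/m}(q_k)\bigr)$ with $q_k=(k-1)/(k(m-1))$. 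Since $q_k/(\lambda/m)\to 1/\lambda$ as $m\to\infty$, \lemref{H} together with the identity $h(1/\lambda)=I_\lambda/\lambda$ yields $k(m-1)\, H_{\lambda/m}(q_k) = k I_\lambda\,(1+o(1))$, uniformly over $k=o(m)$.

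Putting the pieces together,
\[
\P(|\cC_m|\geq u_m) \leq \exp\bigl(\log m - u_m I_\lambda(1+o(1))\bigr).
\]
The hypothesis $\liminf u_m I_\lambda/\log m > 1$ supplies some $\epsilon>0$ with $u_m I_\lambda \geq (1+\epsilon)\log m$ eventually, at which point the bound tends to zero. The one delicate point, and the main obstacle, is the near-critical regime $\lambda \to 1$: there $I_\lambda \to 0$ forces $u_m$ to diverge, and one must make sure that the entropy expansion from \lemref{H} and the Chernoff approximation remain valid with a genuine $o(1)$ error. The technical assumption $\log(I_\lambda^{-1}\vee 1) = o(\log m)$ is exactly what ensures $u_m = O(m^{o(1)}\log m) = o(m)$, so that $q_k/(\lambda/m)$ converges to $1/\lambda$ uniformly in the range of $k$ we care about, and the ratio $k(m-1)H_{\lambda/m}(q_k)/(k I_\lambda)$ converges to $1$ uniformly.
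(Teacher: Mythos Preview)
Your argument is correct and is exactly the standard proof of \citep[Th.~4.4]{remco:lecture} that the paper defers to: union bound over vertices, stochastic domination of the cluster size by a binomial branching process, and a Chernoff estimate on the total progeny. The paper itself omits all details and simply points to that reference, noting only that one must allow $\lambda$ to vary with $m$; your discussion of the near-critical case $\lambda\to 1$ --- checking that $u_m=o(m)$ via the hypothesis $\log(I_\lambda^{-1}\vee 1)=o(\log m)$ and that the relative error in the entropy expansion is $O\big(|\log\lambda|/(u_m I_\lambda)\big)=o(1)$ --- is precisely the extra care required.
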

\begin{proof}
This lemma is a slightly modified version of \citep[Th.~4.4]{remco:lecture}, the main difference being that $\lambda$ was fixed in the original statement. Details are omitted. 
\end{proof}

Define $c= (c'\wedge 1)/4$. 
Applying \lemref{subcritic_cluster_upper}, $|\Cmax|\leq t_0 := I_{\lambda_0}^{-1}\log(N)(1+c)$, with probability going to one under $\P_0$. 

We now need to lower-bound the size of $\Cmax$ under $\P_S$.
Define
\beqn
k_0&=&  (1-c)\log(n)\big[I_{\lambda_1}+\lambda_0-\lambda_0e^{I_{\lambda_1}} \big]^{-1}\ , \quad k= \lceil k_0\rceil\ ,  \\
q_0&= & (1-c)\log(n)\frac{1-\lambda_0e^{I_{\lambda_1}}}{I_{\lambda_1}+\lambda_0-\lambda_0e^{I_{\lambda_1}} }\ , \quad  q=\lfloor q_0\rfloor \ . 
\eeqn
The denominator of $k_0$ is positive  since $\lambda_0e^{I_{\lambda_1}}\leq 1$ and 
\beq\label{eq:lb_denominator}
I_{\lambda_1}+\lambda_0-\lambda_0e^{I_{\lambda_1}}\geq  I_{\lambda_1}+e^{-I_{\lambda_1}}\left(1-e^{I_{\lambda_1}}\right)= I_{e^{-I_{\lambda_1}}}>0\ .
\eeq
We note that $k = O(\log n)$, unless the denominator of $k_0$ goes to zero, which is only possible when $I_{\lambda_1}$ goes to zero (implying $\lambda_1 \to 1$), in which case  
\beq\label{eq:upper_bound_k}
k \sim \log(n)[I_{\lambda_1}(1-\lambda_0)]^{-1} = O\left[I_{\lambda_1}^{-1}\vee 1\right]\log(n)= O\left[\log(N)\right]\ ,
\eeq
since, in this case, \eqref{eq:hyp_CC-sub} implies that $I_{\lambda_1}^{-1}=O\left( \log(n)/\log(N)\right)$, and $\lim\sup \lambda_0<1$ by assumption.  So \eqref{eq:upper_bound_k} holds in any case.

We shall prove that among the connected components of $\cG_S$ of size larger than $q$, there exists at least one component whose size in $\cG$ is larger than $k$.
By definition of $c$, we have  $\liminf k/t_0\geq \tau (1-c)/(1+c) \ge (1+c')(1-c)/(1+c) > 1$, and the connected component test is therefore powerful.
The main arguments  rely on the second moment method and on the comparison between cluster sizes and branching processes.
Before that, recall that $t_0\to \infty$, so that $\log(n)I^{-1}_{e^{-I_{\lambda_1}}}\succ k_0\to \infty$, which in turn implies $I_{\lambda_1}=o\left(\log(n)\right)$.

\begin{lem}\label{lem:subcritic_cluster_lower}
Fix any $c>0$. Consider the distribution $\bbG(m, \lambda/m)$ and assume that $\lambda$ satisfies
\[
\lim\sup \lambda \le 1,\qquad \log\left[I^{-1}_{\lambda}\vee 1\right]= o\left(\log(m)\right)\ , \qquad I_{\lambda}^{-1}\log m\rightarrow \infty\ .
\]
For any sequence $q= a\log(m)$ with $a\leq I_{\lambda}^{-1}(1-c)$, let $Z_{\geq q}$ denote the number of nodes belonging to a connected component whose size is larger than $q$. With probability going to one, we have 
\beq \label{eq:lower_bound_Z}
Z_{\geq q}\geq m^{1-aI_{\lambda}-o(1)}\ . 
\eeq
\end{lem}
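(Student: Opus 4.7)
The plan is to apply the second moment method to the count $Z_{\ge q} = \sum_{v \in [m]} \1_{|\cC(v)| \ge q}$, where $\cC(v)$ denotes the connected component of $v$ in $\bbG(m, \lambda/m)$. Everything reduces to obtaining a matching two-sided estimate of the marginal tail $p_q := \P(|\cC(v)| \ge q)$. Rather than passing through a branching-process coupling (whose Borel-type asymptotics become delicate near criticality), I would extract a lower bound on $p_q$ directly from tree counting: for a specified labelled tree $T$ on $q$ vertices containing $v$,
\[
\P(\cC(v) = T) = (\lambda/m)^{q-1}(1-\lambda/m)^{\binom{q}{2} - (q-1) + q(m-q)}\ ,
\]
and there are $\binom{m-1}{q-1} q^{q-2}$ such trees by Cayley's formula. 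Summing over these disjoint events and applying Stirling's approximation yields, for $q = o(\sqrt m)$,
\[
p_q \ \ge\ \P(|\cC(v)| = q) \ \ge\ \frac{e^{-I_\lambda q}}{\lambda\, q\,\sqrt{2\pi q}}\,(1+o(1))\ .
\]
Under $\log[I_\lambda^{-1}\vee 1] = o(\log m)$ one has $q = m^{o(1)}$, so the prefactor is $m^{-o(1)}$; therefore $p_q \ge m^{-a I_\lambda - o(1)}$ and $\E[Z_{\ge q}] \ge m^{1 - a I_\lambda - o(1)}$.

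For the second moment I would split $\E[Z_{\ge q}^2] = A + B$ according to whether $u$ and $v$ lie in the same component. The same-component sum telescopes to $A = m\,\E[|\cC(v)|\,\1_{|\cC(v)|\ge q}]$, and combining the matching exponential upper tail $\P(|\cC(v)| \ge k) \le e^{-I_\lambda k(1+o(1))}$ (obtained from a standard BFS Chernoff bound, analogous to the proof of \lemref{subcritic_cluster_upper}) with Abel summation yields $A \le m^{1+o(1)} p_q$. The distinct-component sum $B$ is controlled by stochastic domination: conditionally on $\cC(u) = T$ with $v \notin T$, the graph induced on $[m]\setminus T$ is distributed as $\bbG(m-|T|, \lambda/m)$, whose cluster at $v$ is dominated by that of $\bbG(m, \lambda/m)$, so $B \le m^2 p_q^2 = (\E[Z_{\ge q}])^2$. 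Using $a I_\lambda \le 1-c$, which gives $m p_q \ge m^{c - o(1)}$,
\[
\frac{\E[Z_{\ge q}^2]}{(\E[Z_{\ge q}])^2} \ \le\ 1 + \frac{m^{o(1)}}{m p_q} \ \le\ 1 + m^{-c + o(1)} \ \to \ 1\ .
\]
The Paley--Zygmund inequality then yields $Z_{\ge q} \ge \tfrac{1}{2}\E[Z_{\ge q}]$ with probability tending to one, which is \eqref{eq:lower_bound_Z}.

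The main obstacle will be maintaining uniform control of the $o(1)$ errors as $\lambda$ approaches $1$: there $I_\lambda \to 0$, and the permissible range of $q$ extends up to $(1-c) I_\lambda^{-1}\log m$, possibly growing much faster than $\log m$. The tree-counting derivation only requires $q = o(\sqrt m)$ together with boundedness of $\lambda$, so it survives this regime once one checks that the boundary correction $(1-\lambda/m)^{\binom{q}{2}} = 1 - O(\lambda q^2/m)$ and the polynomial prefactor $\lambda^{-1} q^{-3/2}$ are all $m^{\pm o(1)}$; the two hypotheses $\log[I_\lambda^{-1}\vee 1] = o(\log m)$ and $I_\lambda^{-1}\log m \to \infty$ are precisely calibrated to make this verification routine.
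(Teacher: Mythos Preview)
Your proposal is correct and follows essentially the same second-moment strategy that the paper invokes (the paper simply refers to the argument around Equations~(4.3.34)--(4.3.35) in van der Hofstad's notes and omits details). Your choice to lower-bound $p_q$ by direct tree counting via Cayley's formula, rather than through the lower branching-process coupling used in the reference, is a legitimate and arguably cleaner alternative in this regime, since it sidesteps the near-critical Borel asymptotics you were worried about; both routes give $p_q \ge m^{-aI_\lambda - o(1)}$ under the stated hypotheses.

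One small correction: the Paley--Zygmund inequality by itself only gives $\P(Z_{\ge q}\ge \tfrac12\E Z_{\ge q})$ bounded away from zero, not tending to one. Since you have already established $\E[Z_{\ge q}^2]/(\E Z_{\ge q})^2 \to 1$, i.e.\ $\Var(Z_{\ge q}) = o((\E Z_{\ge q})^2)$, you should invoke Chebyshev's inequality instead to conclude $Z_{\ge q} \ge \tfrac12 \E Z_{\ge q}$ with probability tending to one.
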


\begin{proof}
This lemma is a simple extension of the second moment method argument (Equations (4.3.34) and (4.3.35)) in the proof of \citep[Th.~4.5]{remco:lecture}, where $\lambda$ is fixed, while here it may vary with $m$, and in particular, may converge to 1. We leave the details to the reader. 
\end{proof}

Observe that
\[
\frac{q}{(1-c) I_{\lambda_1}^{-1}\log(n)}\leq \frac{I_{\lambda_1}-\lambda_0I_{\lambda_1}e^{I_{\lambda_1}}}{I_{\lambda_1}+\lambda_0-\lambda_0e^{I_{\lambda_1}}} \leq  1 - \lambda_0 \, \frac{1 - e^{I_{\lambda_1}} + I_{\lambda_1}e^{I_{\lambda_1}}}{I_{\lambda_1}+\lambda_0-\lambda_0e^{I_{\lambda_1}}}  \le 1 \ ,
\]
using the fact that $xe^{x}-e^x+1\geq 0$ for any $x \ge 0$.
Thus, we can apply Lemma \ref{lem:subcritic_cluster_lower} to $\cG_S$. 
And by Lemma \ref{lem:subcritic_cluster_upper}, the largest connected component of $\cG_S$ has size smaller than $2I_{\lambda_1}^{-1}\log(n)$ with probability tending to one.  Hence, $\cG_S$ contains more than 
\[
\frac{n^{1 +o(1)} e^{-qI_{\lambda_1}}}{2 I_{\lambda_1}^{-1}\log n} = ne^{-qI_{\lambda_1}-o(\log(n))} 
\]
connected components of size larger than $q$.  (We used the fact that $\log(I_{\lambda_1}^{-1} \vee 1)=o(\log n)$.)  
If $k_0-q_0\leq 1$, then applying Lemma  \ref{lem:subcritic_cluster_lower} to $q+2$ (instead of $q$) allows us to conclude that there exists a connected component of size at least $k$. This is why we assume in the following  that $\lim\inf k_0-q_0>1$.  By definition of $k_0$ and $q_0$, $k_0-q_0\geq 1$, implies that 
\[\log(n)\lambda_0\geq \frac{1}{1-c} e^{-I_{\lambda_1}}\left(I_{\lambda_1}+\lambda_0-\lambda_0e^{I_{\lambda_1}}\right)\geq \frac{1}{1-c}e^{-I_{\lambda_1}}I_{e^{-I_{\lambda_1}}} \]
by \eqref{eq:lb_denominator}.
Thus, $\lim\inf k_0-q_0>1$ implies that for $n$ large enough  $\log(n)\lambda_0\geq \lambda_1 I_{\lambda_1}e$ and consequently 
\beq\label{eq:upper_bound_I0}
I_{\lambda_0}\leq O(1)-\log(\lambda_0) \leq  o\left(\log(n)\right)+ I_{\lambda_1}+\log\left[I^{-1}_{e^{-I_{\lambda_1}}}\right]= o(\log(n))
\eeq
since $I_{\lambda_1}=o(\log(n))$,  $-\log(I_{\lambda_1})\leq o(\log(n))$ and  $I^{-1}_{e^{-I_{\lambda}}}=O\left[(e^{-I_{\lambda}} -1)^{-2}\right]=O\left[I_{\lambda}^{-2}\right]$.

Let  $\{\cC_S^{(i)},\ i\in \cI\}$ denote the collection of connected components of size larger than $q$ in $\cG_S$. For any such component $\cC_S^{(i)}$, we extract any subconnected component $\tilde{\cC}_S^{(i)}$ of size $q$. Recall that, with probability going to one: 
\beq\label{eq:lower_bound_cI}
|\cI|\geq n^{1-o(1)}e^{-qI_{\lambda_1}} \ .
\eeq
For any node $x$, let $\cC(x)$ denote the connected component of $x$ in $\cG$, and let $\cC_{-S}(x)$ denote the connected component of $x$ in the graph $\cG_{-S}$ where all the edges in $\cG_S$ have been removed.  Then, let \[U_i:=\bigcup_{x\in \tilde{\cC}_S^{(i)}}\cC_{-S}(x)\ ,\ i\in \cI\ ;\quad \quad V= \sum_{i\in \cI}\IND{ |U_i|\geq k}\ .\]
Since $V\geq 1$ implies that the largest connected component of $\cG$ is larger than $k$, it suffices to prove that $V$ is larger than one with probability going to one.
Observe that conditionally to $|\cI|$, the distribution of $(|U_i|$, $i\in \cI$) is independent of $\cG_S$.  Again, we use a second moment method based on a stochastic comparison between connected components and binomial branching processes.

\begin{lem}\label{lem:second_moment}
The following bounds hold
\begin{eqnarray}\nonumber
\P_S[|U_i|\geq k]& \geq& \left(\frac{k}{k-q}\right)^{k-q}e^{-\lambda_0q -I_{\lambda_0}(k-q)}n^{-o(1)} \ , \\
\Var_S[V|\cG_{S}]&\leq& |\cI|\P_S[|U_i|\geq k]+ \frac{|\cI|^2 q^2}{N}\E_S[|U_i|\IND{U_i\geq k}] \ ,\label{eq:ub_varV}\\
\P_S[|U_i|\geq k]& \leq& \E_S[|U_i|\IND{U_i\geq k}] \leq   \left(\frac{k}{k-q}\right)^{k-q}e^{-\lambda_0q -I_{\lambda_0}(k-q)}n^{o(1)}\label{eq:ub_Ui}\ .
\end{eqnarray}
\end{lem}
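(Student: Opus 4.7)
My plan is to work conditionally on $\cG_S$, exploiting that under $\P_S$ the graph $\cG_{-S}$ consists of independent $\text{Bernoulli}(p_0)$ edges outside $S\times S$ (and no edges inside $S\times S$), independently of $\cG_S$. The three assertions then split cleanly: the first and third are tail estimates for a single cluster $|U_i|$, while the second requires handling the dependence between the different $U_i$.

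For the tail of $|U_i|$, I will couple the breadth-first exploration of $U_i$ starting from the $q$ seeds in $\tilde{\cC}_S^{(i)}$ with a Galton--Watson total progeny. The number of fresh children of a newly revealed vertex is stochastically dominated by $\Bin(N-1,p_0)$, and, as long as fewer than $k$ vertices have been explored, dominates $\Bin(N-n-k,p_0)$. Since $n,k=o(N)$, both offspring means equal $\lambda_0(1+o(1))$, which only perturbs $I_{\lambda_0}$ by $o(1)$ and produces the $n^{\pm o(1)}$ slack in the statement. For the resulting total progeny $T$ with $q$ ancestors, the Otter--Dwass / Borel--Tanner identity gives
\[
\P(T=m) \;=\; \frac{q}{m}\,\P\bigl(\Bin(m(N-1),p_0)=m-q\bigr),
\]
and combining Stirling's formula with \lemref{H} yields
\[
\P(T=m) \;\asymp\; \frac{q}{m\sqrt{m-q}}\left(\frac{m}{m-q}\right)^{m-q} e^{-q\lambda_0 - I_{\lambda_0}(m-q)}.
\]
Since $e^{-I_{\lambda_0}}<1$, the tail $\P(T\geq k)$ is driven by the $m=k$ term, with the geometric series and polynomial factors in $k,q=O(\log n)$ absorbed into the $n^{\pm o(1)}$. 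This delivers the lower bound on $\P_S[|U_i|\geq k]$; multiplying by $m$ inside the sum delivers the upper bound on $\E_S[|U_i|\IND{|U_i|\geq k}]$.

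For the variance bound, I decompose
\[
\Var_S[V\mid\cG_S] \;=\; \sum_i \Var_S(\IND{|U_i|\geq k}\mid\cG_S) + \sum_{i\neq j}\Cov_S(\IND{|U_i|\geq k},\IND{|U_j|\geq k}\mid\cG_S).
\]
The diagonal is at most $|\cI|\,\P_S[|U_i|\geq k]$. For the cross terms, the key observation is that since $U_i,U_j$ are connected components of the same graph $\cG_{-S}$, either $U_i=U_j$ or $U_i\cap U_j=\emptyset$. On the disjoint event, conditioning on $U_i=W$ leaves the exploration of $U_j$ to run in $\cG_{-S}$ restricted to $V\setminus W$, an Erd\"os--R\'enyi-type graph on a strictly smaller vertex set; a standard coupling gives $\P_S(|U_j|\geq k, U_j\cap U_i=\emptyset\mid U_i)\leq \P_S(|U_j|\geq k)$, so this part contributes non-positive covariance. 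On the merger event, the union bound over $x\in\tilde{\cC}_S^{(i)}$ yields that $x\in U_j$ requires an edge of $\cG_{-S}$ from $x$ to $U_j\cap S^c$, an event of conditional probability at most $p_0|U_j|$ given $U_j$; thus
\[
\P_S(U_i=U_j,\,|U_j|\geq k)\;\leq\;q\,p_0\,\E_S[|U_j|\IND{|U_j|\geq k}].
\]
Summing over $i\neq j$ and using $p_0=\lambda_0/N\leq q/N$, valid because $\lambda_0$ stays bounded while $q\to\infty$ in the subcritical regime of interest, yields the stated $\frac{|\cI|^2 q^2}{N}$ factor.

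The main difficulty will be the lower-tail calculation: one must verify that truncating the exploration at size $k$ and running the smaller binomial branching process does not erode the exponential rate from $I_{\lambda_0}$, and that the combinatorial factor $(k/(k-q))^{k-q}$ in the claimed bound emerges cleanly from the Stirling expansion of the binomial coefficient after the Otter--Dwass identity, rather than being lost in looser polynomial bookkeeping.
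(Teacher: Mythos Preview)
Your treatment of the first and third inequalities is essentially the same as the paper's: couple the exploration of $U_i$ above and below with the total progeny of $q$ independent binomial branching processes (the paper's \lemref{stochastic_domination}), invoke Otter--Dwass (the paper's \lemref{total_progeny}), and expand the resulting binomial point probability with Stirling. The paper bounds the upper tail by first applying Chernoff to the binomial and then showing the exponent $A_r$ is decreasing in $r$; your claim that the sum is driven by the $m=k$ term amounts to the same thing. So far, so good.

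The variance argument has a genuine gap. You assert that ``$U_i,U_j$ are connected components of the same graph $\cG_{-S}$, so either $U_i=U_j$ or $U_i\cap U_j=\emptyset$.'' This is false: each $U_i$ is a \emph{union} of connected components of $\cG_{-S}$ (one per seed in $\tilde\cC_S^{(i)}$), and two such unions may overlap partially. What is true is that $U_i\cap U_j\neq\emptyset$ iff some seed $y\in\tilde\cC_S^{(j)}$ lies in $U_i$; this is the correct starting point for the union bound. Your next step, bounding ``$\P(x\in U_j\mid U_j)\le p_0|U_j|$,'' is also not well posed: once you condition on the random set $U_j$, the event $\{x\in U_j\}$ is deterministic, not an edge event with conditional probability $p_0|U_j|$. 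An edge-revealing argument can be made to work, but it requires removing the vertex $y$ first (defining $U_i^{-y}$), noting that $y\in U_i$ iff $y$ has an edge to $U_i^{-y}\cap S^c$, and then dealing with the fact that $|U_i|$ itself changes when $y$ joins. The paper sidesteps this entirely: it conditions on $|U_i|=r$, applies a union bound over the $q^2$ pairs $(x,y)$ of seeds, and then uses \emph{exchangeability} of the vertices in $\cV\setminus\{x\}$ (together with the observation that vertices in $S$ are, if anything, less likely than vertices in $S^c$ to lie in $\cC_{-S}(x)$, since they have fewer $\cG_{-S}$-edges) to bound $\P_S[y\in\cC_{-S}(x)\mid |U_i|=r]\le r/(N-1)$. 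This gives $\frac{q^2}{N}\E_S[|U_i|\IND{|U_i|\ge k}]$ directly, without your extra step $\lambda_0\le q$. Replace your merger argument by this symmetry argument and the proof goes through.
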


Before proceeding to the proof of Lemma \ref{lem:second_moment}, we finish proving that $V\geq 1$ with probability going to one.
Let  define $\mu_k:= \left(\frac{k}{k-q}\right)^{k-q}e^{-\lambda_0q -I_{\lambda_0}(k-q)}$.
Applying  Chebyshev inequality, we derive from Lemma \ref{lem:second_moment}
\beqn
V \geq |\cI| \mu_k n^{-o(1)} - O_{\P_S}\left[\left(|\cI| \mu_k\right)^{1/2}n^{o(1)}\right]- O_{\P_S}\left[|\cI| (\mu_k/N)^{1/2}n^{o(1)}\right]\ .
\eeqn	
In order to conclude, we only to need to prove that $|\cI|\mu_k\geq n^{c-o(1)}$ since  $\left(|\cI| \mu_k\right)^{1/2}/|\cI| (\mu_k/N)^{1/2} = \sqrt{N/|\cI|} \ge 1$.	
Relying on \eqref{eq:lower_bound_cI}, we derive
\beqn
|\cI|\mu_k&\geq & n^{1-o(1)} \left(\frac{k}{k-q}\right)^{k-q}e^{-\lambda_0q-qI_{\lambda_1} -I_{\lambda_0}(k-q)}\\
&\geq & n^{1-o(1)}   \left(\frac{k_0}{k_0-q_0}\right)^{k_0-q_0}e^{-\lambda_0q_0-q_0I_{\lambda_1} -I_{\lambda_0}(k_0-q_0)-2I_{\lambda_0}}\\
&\geq & n^{1-o(1)} \lambda_0^{-(k_0-q_0)}e^{-\lambda_0q_0-k_0I_{\lambda_1}-I_{\lambda_0}(k_0-q_0)} \\
&\geq & n^{1-o(1)}e^{-k_0\lambda_0-k_0I_{\lambda_1}}e^{k_0-q_0}\\
&\geq & n^{1-o(1)}\exp\left[-k_0\left(\lambda_0+ I_{\lambda_1}- \lambda_0 e^{I_{\lambda_1}}\right) \right]= n^{c-o(1)}\ ,
\eeqn
where we use  \eqref{eq:upper_bound_I0} and $\frac{k_0}{k_0-q_0}= \lambda_0^{-1}e^{-I_{\lambda_1}}$ in the third line, the definition $I_{\lambda_0}=\lambda_0-\log(\lambda_0)-1$ in the fourth line, and the definitions of $k_0$ and $q_0$ in the last line.

\begin{proof}[Proof of Lemma \ref{lem:second_moment}]

We shall need the two following lemmas.

\begin{lem}[Upper bound on the cluster sizes] \label{lem:stochastic_domination}
Consider the distribution $\bbG(m, \lambda/m)$ and a collection $\cJ$ of nodes. For each $k\geq |\cJ|$, 
\beqn
\P\left[|\cup_{x\in \cJ}\cC(x)|\geq k\right]\leq \P_{m,\lambda/m}\left[T_1+\ldots + T_{|\cJ|}\geq k\right]\ ,
\eeqn
where $T_1,T_2,\ldots$ denote the total progenies of i.i.d.~binomial branching processes with parameters $m$ and $\lambda/m$.
For each $|\cJ|\leq k\leq m$, 
\beqn
\P\left[|\cup_{x\in \cJ}C(x)|\geq k\right]\geq \P_{m-k,\lambda/m}\left[T_1+\ldots + T_{|\cJ|}\geq k\right]\ ,
\eeqn
where $T_1,T_2,\ldots$ denote the total progenies of i.i.d. binomial branching processes with parameters $m-k$ and $\lambda/m$.
\end{lem}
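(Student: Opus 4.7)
My plan is to derive both bounds by coupling a joint breadth-first exploration of $\{\cC(x): x \in \cJ\}$ in $\bbG(m,\lambda/m)$ with $|\cJ|$ independent binomial branching processes, extending the classical single-cluster argument of \citep[Th.~4.2, Th.~4.3]{remco:lecture}. Concretely, I initialize both the ``seen'' set $\cS_0$ and an ``active'' queue as $\cJ$; at each step $t \ge 1$, I dequeue one active vertex $v_t$, reveal the edges from $v_t$ to the currently unseen vertices, and add any new neighbours to both the queue and the seen set. The exploration halts at the first time $\tau$ the queue is empty, and at that moment $\cS_\tau = \cup_{x\in\cJ}\cC(x)$. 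The key fact is that conditionally on the past, the number of children revealed at step $t$ is exactly $\Bin(m - |\cS_{t-1}|, \lambda/m)$.

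For the upper bound, the children count is stochastically dominated by $\Bin(m, \lambda/m)$, so I build, step by step, a coupling to a virtual exploration whose children count at each step is an independent $\Bin(m, \lambda/m)$ variable. This virtual process is precisely the joint BFS of $|\cJ|$ independent binomial branching processes with offspring $\Bin(m,\lambda/m)$, so its final size equals $T_1 + \cdots + T_{|\cJ|}$. Since real children $\leq$ virtual children at every step, the active queue of the real process stays below that of the virtual one, so the real exploration terminates no later and has seen set no larger, yielding the upper stochastic inequality almost surely.

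For the lower bound I reverse the direction and run the same BFS until the first time $|\cS_t| \ge k$. As long as $|\cS_{t-1}| \le k-1$, we have $m - |\cS_{t-1}| \ge m - k$, so the children count stochastically dominates $\Bin(m-k, \lambda/m)$. I couple step by step to a virtual process whose children count is an independent $\Bin(m-k, \lambda/m)$ variable at each step, making the real count at least the virtual one whenever $|\cS_{t-1}| < k$. The virtual process is the joint BFS of $|\cJ|$ independent branching processes with offspring $\Bin(m-k,\lambda/m)$ and total progeny $T_1 + \cdots + T_{|\cJ|}$. A contrapositive then finishes the argument: if $|\cup_{x\in\cJ}\cC(x)| < k$, then the coupling bound holds at every step of the real exploration, the real active queue dominates the virtual one throughout, the virtual queue empties no later than the real one, and therefore $T_1 + \cdots + T_{|\cJ|}$ is at most the final real seen size, hence strictly less than $k$.

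The only subtle point, not a genuine obstacle, is the bookkeeping of the step-by-step binomial coupling with fresh independent randomness at each step, so that the virtual children variables are truly i.i.d.\ and the virtual exploration is a bona fide collection of $|\cJ|$ independent branching processes that lives on an abstract tree of individuals rather than on the vertex set of $\bbG(m,\lambda/m)$; this is handled exactly as in the single-root case. The hypotheses $|\cJ| \le k$ and $k \le m$ serve only to make the truncated exploration and the dominating parameter $m - k \ge 0$ well-defined.
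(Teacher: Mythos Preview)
Your proposal is correct and is exactly the approach the paper intends: the paper does not write out a proof but simply states that the lemma is a slightly modified version of \citep[Th.~4.2 and 4.3]{remco:lecture} with the single root replaced by the set $\cJ$, and leaves the details to the reader. Your joint BFS exploration starting from all of $\cJ$, together with the step-by-step binomial coupling in each direction, is precisely that modification.
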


Lemma \ref{lem:stochastic_domination}  is a slightly modified version of \citep[Th.~4.2 and 4.3]{remco:lecture}, the only difference being that $|\cJ|=1$ in the original statement. The proof is left to the reader. The following result is proved in \cite[Sec.~3.5]{remco:lecture}.

\begin{lem}[Law of the total progeny]\label{lem:total_progeny}
 Let $T_1,\ldots, T_r$ denote the total progenies of $r$ i.i.d.~branching processes with offspring distribution $X$. Then,
\[\P\left[T_1+\ldots + T_r= k\right]= \frac{r}{k}\P\left[X_1+\ldots +X_k=k-r\right]\ , \]
where $(X_i)$, $i=1,\ldots, k$ are i.i.d. copies of of $X$.
\end{lem}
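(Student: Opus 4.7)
My plan is to reduce the identity to the classical Cycle Lemma / Kemperman formula for random walks whose increments can drop by at most one unit. Fix $k \ge r \ge 1$ and introduce the walk $S_n := \sum_{i=1}^n (X_i - 1)$ built from the i.i.d.\ offspring variables; the increments satisfy $X_i - 1 \ge -1$, so $S_n$ can decrease by at most one per step. A standard exploration of a branching forest with $r$ ancestors (pop one individual per time step, append its offspring to a queue) identifies the sum of the total progenies $T_1 + \cdots + T_r$ with the first passage time
\[
\tau_{-r} := \inf\{ n \ge 1 : S_n = -r \}.
\]
In particular, $T_1 + \cdots + T_r = k$ if and only if $S_k = -r$ and $S_j > -r$ for every $j < k$. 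Since $\{S_k = -r\}$ is exactly $\{X_1 + \cdots + X_k = k - r\}$, the identity to be proved is equivalent to
\[
\P\left[\tau_{-r} = k\right] \;=\; \frac{r}{k}\,\P\left[S_k = -r\right].
\]

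The key combinatorial input is the Cycle Lemma (Dvoretzky--Motzkin): for any deterministic tuple $(y_1, \dots, y_k)$ of integers with $y_i \ge -1$ and $y_1 + \cdots + y_k = -r$ (with $1 \le r \le k$), exactly $r$ of the $k$ cyclic rotations $(y_j, y_{j+1}, \dots, y_{j-1})$ have all partial sums strictly above $-r$ at every index $j < k$ (with the $k$-th partial sum equal to $-r$). I would invoke this; it admits a short bijective proof that tracks the positions where the running sum attains its consecutive \emph{final} minima, using crucially that a step of size $\ge -1$ cannot skip levels on the way down, so the $r$ levels $-1, -2, \dots, -r$ give $r$ distinguished starting points.

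Armed with the Cycle Lemma, I apply it pathwise to the random increments $(X_1 - 1, \dots, X_k - 1)$. For every realization in $\{S_k = -r\}$, exactly $r$ of the $k$ cyclic rotations yield a trajectory for which $\tau_{-r} = k$. Because $(X_1, \dots, X_k)$ is i.i.d., its joint law is invariant under any cyclic permutation, so each rotated trajectory has the same distribution as the original. Summing the pointwise indicator identity over the $k$ rotations and taking expectations yields
\[
k\,\P\left[\tau_{-r} = k,\, S_k = -r\right] \;=\; r\,\P\left[S_k = -r\right].
\]
Since the event $\{\tau_{-r} = k\}$ forces $S_k = -r$, the left-hand side collapses to $k\,\P[\tau_{-r} = k]$, and dividing by $k$ delivers the claimed formula.

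The only genuine obstacle is establishing the Cycle Lemma itself; once this combinatorial fact is granted, the probabilistic content is a one-line symmetrization via exchangeability. A secondary subtlety is making the branching-forest/walk correspondence fully rigorous when $\P[X = 0] > 0$, where one must fix a canonical exploration order so that the queue really does empty at time $\tau_{-r}$; this is standard and could simply be cited from the same lecture notes already referenced by the authors.
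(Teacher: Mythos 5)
Your argument is correct. Note, though, that the paper does not actually prove this lemma: it is stated with a pointer to \citep[Sec.~3.5]{remco:lecture}, where it appears as (a consequence of) the hitting-time theorem for left-continuous random walks, so there is no in-paper argument to compare against line by line. Your proposal supplies exactly the standard self-contained proof of that cited result: the exploration (queue) construction identifies $T_1+\cdots+T_r$ with the first passage time $\tau_{-r}$ of the walk with increments $X_i-1\ge -1$, the Dvoretzky--Motzkin cycle lemma says that on $\{S_k=-r\}$ exactly $r$ of the $k$ cyclic rotations are first-passage paths, and cyclic exchangeability of the i.i.d.\ increments converts the pointwise identity $\sum_{j=1}^k \1_{A_j}=r\,\1\{S_k=-r\}$ into $k\,\P[\tau_{-r}=k]=r\,\P[S_k=-r]$. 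The two subtleties you flag are handled correctly: rotations preserve the total sum, so the indicator identity holds off $\{S_k=-r\}$ as well, and the event $\{T_1+\cdots+T_r=k\}$ only involves finite progenies, so the walk correspondence (with any fixed exploration order, e.g.\ breadth-first) is unproblematic even when $\P[X=0]>0$. Whether one proves the hitting-time theorem by the cycle lemma, as you do, or by the induction argument used in some expositions, the key structural input is the same left-continuity $X_i-1\ge -1$; your route has the advantage of making the factor $r/k$ combinatorially transparent, at the cost of having to establish the cycle lemma itself, which you correctly identify as the only real work.
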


Consider any  subset $\cJ$ of node of size $q$. The distribution $|U_i|=|\bigcup_{x\in \tilde{\cC}_S^{(i)}}\cC_{-S}(x)|$ is stochastically dominated by the distribution of $Z:=|\bigcup_{x\in\cJ} \cC(x)|$ under the null hypothesis.
Let $T_q$ be sum of the total progenies of $q$ independent binomial branching processes with parameters $N-n+q-k$ and $p_0$. 
By Lemma \ref{lem:stochastic_domination}, we derive 
\[ \P_S[|U_i|\geq k] \geq \P_0[Z\geq k] \geq \P_{N-n+q-k,p_0 }[T_q\geq k]\geq \P_{N-n+q-k,p_0 }[T_q= k]\ .\]
Let $X_1,X_2,\ldots $ denote independent binomial random variables with parameters $N-n+q-k$ and $p_0$. Relying on Lemma \ref{lem:total_progeny} and the lower bound
$\binom{s}{r}\geq \frac{(s-r)^r}{r!}\geq (re)^{-1}\left(\frac{(s-r)e}{r}\right)^r$, we derive 
\beqn 
\P_{N-n+q-k,p_0 }[T_q= k]&=   & \frac{q}{k}\P_{N-n+q-k,p_0 }[X_1+\ldots + X_k= k-q]\\
& = & \frac{q}{k}\binom{k(N-n+q-k)}{k-q}p_0^{k-q}(1-p_0)^{k(N-n+q-k)-k+q} \\
&\succ& \frac{q}{k^2}\left[\frac{ek(N-n-2(q-k))}{k-q}\right]^{k-q} \left(\frac{\lambda_0}{N}\right)^{k-q} e^{-\lambda_0k-kO(n/N)}\\
&\succ & \frac{q}{k^2}e^{-I_{\lambda_0}(k-q)}e^{-\lambda_0 q}\left(\frac{k}{k-q}\right)^{k-q}e^{-kO(n/N)}\\
&\succ &  \left(\frac{k}{k-q}\right)^{k-q}e^{-\lambda_0q -I_{\lambda_0}(k-q)} n^{o(1)}\ ,
\eeqn
where  \eqref{eq:upper_bound_k} with $n\log(N)/N=o(\log(n))$ in the last line.\\

Let us now prove \eqref{eq:ub_Ui}.  The first inequality is Markov's.
For the second, by Lemma \ref{lem:stochastic_domination}, 
$U_i$ is stochastically dominated by $\tilde{T}_q$, the sum of the total progenies of $q$ independent binomial branching  processes with parameters $N$ and $p_0$, so that
\[
\E_S\left[|U_i|\IND{U_i\geq k}\right] = \sum_{r=k}^N \P_{S}[U_i\geq  r] \le \sum_{r=k}^\infty \P_{N,p_0}[\tilde{T}_q\geq   r] \le \sum_{r=k}^{\infty} r \P_{N,p_0}[\tilde{T}_q=   r] \ .
\]
We use Lemma \ref{lem:total_progeny} to control the deviation of $\tilde{T}_q$. Below $X_1,X_2,\ldots$ denote independent binomial random variables with parameter $N$ and $p_0$.  
\begin{eqnarray}\nonumber 
\sum_{r=k}^{\infty} r \P_{N,p_0}[\tilde{T}_q=   r] 
&\leq & \sum_{r=k}^{\infty}
r \frac{q}{r}\P_{N,p_0}[X_1+\ldots + X_r= r-q]\\ \label{eq:upU1}
&\leq & \sum_{r=k}^{\infty} q\exp\left[- NrH_{p_0}\left(\frac{r-q}{Nr}\right)\right]\ ,
\end{eqnarray}
by Chernoff inequality since 
\beqn
\frac{r-q}{Nr}\geq \frac{k-q}{Nk}\geq \frac{k_0-q_0}{Nk_0}= \frac{\lambda_0e^{I_{\lambda_1}}}{N}> \frac{\lambda_0}{N} = p_0\ .
\eeqn
By Lemma \ref{lem:H},  $H_{p_0}(a)\geq a\log(a/p_0)-a+p_0$. Thus, we arrive at
\begin{eqnarray}
\E_S\left[|U_i|\IND{U_i\geq k}\right] &\leq & \sum_{r=k}^{\infty} q\exp\left[ -(r-q)\log\left(\frac{r-q}{r\lambda_0}\right) +r-q - r\lambda_0 \right] \nonumber \\
&\leq & q\sum_{r=k}^{\infty} \exp[A_r]\ ,
\label{eq:upper_Ui}
\end{eqnarray}
where $A_r:=  -(r-q) I_{\lambda_0}- q\lambda_0 -(r-q)\log\left(\frac{r-q}{r}\right)$.
Differentiating the function $A_r$ with respect to $r$, we get 
\beqn
\frac{dA_r}{dr}&=& -I_{\lambda_0}-\log\left(\frac{r-q}{r}\right)-1+\frac{r-q}{r}\leq -I_{\lambda_0}-\log\left(\frac{k-q}{k}\right)-1+\frac{k-q}{k}\\
&\leq & -I_{\lambda_0}-\log\left(\frac{k_0-q_0}{k_0}\right)-1+\frac{k_0-q_0}{k_0}= -\lambda_0 -I_{\lambda_1}+\lambda_0e^{I_{\lambda_1}}\ ,
\eeqn 
which is negative as argued below the definition of $k$. Consequently, $A_r$ is a decreasing function of $r$.
 Define $r_1$ as the smallest integer such that $\log((r-q)/r)\geq -I_{\lambda_0}/2$. Since $\lim\sup \lambda_0<1$, it follows $r_1=O(q)$. Coming back to \eqref{eq:upper_Ui}, we derive
\begin{eqnarray}\nonumber
 \E_S\left[|U_i|\IND{U_i\geq k}\right] &\leq & q(r_1-k)_+\exp[A_k]  +  q\sum_{r=r_1}^{\infty}\exp[A_r]\\ \nonumber
&\leq & q e^{A_k}\left[(r_1-k)_+ +\sum_{r=r_1}^{\infty} e^{-(r-k) \left[I_{\lambda_0}- \log((r-q)/r)\right]} \right]\\ \nonumber
&\leq & q e^{A_k}\left[(r_1-k)_+ +\sum_{r=r_1}^{\infty} e^{-(r-k) I_{\lambda_0}/2} \right]\\ \label{eq:upper_bound_AK}
&\leq & e^{A_k} O(k^2) \ ,
\end{eqnarray}
since $\lim\sup\lambda_0<1$. From \eqref{eq:upper_bound_k}, we know that $k=O(\log(N))= n^{o(1)}$, which allows us to prove \eqref{eq:ub_Ui}.

\medskip
Turning to the proof of \eqref{eq:ub_varV}, we have the decomposition
\begin{eqnarray}\nonumber
\Var_S[V|\cG_{S}]&\leq& |\cI|\P_S[U_i\geq k]+\sum_{i\neq i'\in \cI}\left\{\P_S\left[|U_i|\geq k ,\, |U_{i'}|\geq k\right]- \P^2_S\left[|U_i|\geq k\right]  \right\} \\
&\leq & |\cI|\P_S[U_i\geq k]+|\cI|^2 \P_S\left[ |U_i|\geq k \ ,\ U_i\cap U_{i'}\neq \emptyset \right] \nonumber \\ && + |\cI|^2\left\{ \P_S\left[ |U_i|\geq 
k ,\,  |U_{i'}|\geq k\ , U_i\cap U_{i'}=\emptyset \right]- \P^2_S\left[|U_i|\geq k\right] \right\}\ . \label{eq:decomposition_variance}
\end{eqnarray}
The last term is nonpositive. Indeed, 
\beqn
 \lefteqn{\P_S\left[|U_i|\geq k ,\, |U_{i'}|\geq k\ , U_i\cap U_{i'}=\emptyset \right]- \P^2_S\left[|U_i|\geq k\right]} &&\\
&=& \sum_{r=k}^{N}\P_S\left[ |U_i|=r \right]\big(\P_S\left[ |U_{i'}|\geq k ,\ U_i\cap U_{i'}=\emptyset \, \big| \, |U_i|=r  \right]-   \P_S\left[|U_{i'}|\geq k \right]\big)\\
&\leq & \sum_{r=k}^{N}\P_S\left[ |U_i|=r \right]\big(\P_S\left[ |U_{i'}|\geq k  \, \big| \ U_i\cap U_{i'}=\emptyset\ , \, |U_i|=r  \right]-   \P_S\left[|U_{i'}|\geq k \right]\big)
\ ,
\eeqn
where the last difference is negative, as the graph is now smaller once we condition  on $|U_i|\geq 1$ and $U_i\cap U_{i'}=\emptyset$.   Consider the second term in \eqref{eq:decomposition_variance}:
\[
\P_S\left[  |U_i|\geq k\ ,\ U_i\cap U_{i'}\neq \emptyset \right]= \sum_{r=k}^{N}\P_S[|U_i|=r]\P_S[U_i\cap U_{i'}\neq \emptyset \, | \, |U_{i}|=r] \ . 
\]
By symmetry and a union bound, we derive 
$$\P_S[U_i\cap U_{i'}\neq \emptyset \, | \, |U_{i}|=r]\leq  q^2 \P_S[y\in \cC_{-S}(x) \, | \, |U_{i}|=r] \ ,$$ 
for some $x\in  \tilde{\cC}_S^{(i)}$ and $y\in \tilde{\cC}_S^{(i')}$. 
Since the graph $\cG_{-S}$ is not symmetric, the probability that a fixed  node $z$ belongs to $\cC_{-S}(x)$ conditionally to $|\cC_{-S}(x)|$ is smaller for  $z\in S\setminus\{i\}$ than for $z\in S^c$. It follows that
$$\P_S[y\in \cC_{-S}(x) \, | \, |U_{i}|=r]\leq \E_S\left[\left.\frac{|\cC_{-S}(x)|-1}{N-1} \right| \, |U_{i}|=r\right] \ .$$ 
Since $|\cC_{-S}(x)|\leq r$,  we  conclude
\[\P_S\left[  |U_i|\geq k\ ,\ U_i\cap U_{i'}\neq \emptyset \right]\leq \sum_{r=k}^{N}\P_S[|U_i|=r] \frac{q^2r}{N}= \frac{q^2}{N}\E_{S}[|U_i|\IND{U_i\geq k}]\ .\]

\end{proof}

Let us continue with the proof of \thmref{CC-sub}, now assuming that $\lambda_1 < 1$, that Condition \eqref{eq:hyp_CC-sub-min} holds, and that $n^2 = o(N)$.
We assume in the sequel that $I_{\lambda_1}\leq -\log(\lambda_0)$, meaning that $\lambda_1$ is not too small. We may do so while keeping Condition \eqref{eq:hyp_CC-sub-min} true, because the distribution of $|\cC_{\max}|$ under $\P_S$ is increasing with respect to $\lambda_1$ and because for $I_{\lambda_1}= -\log(\lambda_0)$, \eqref{eq:hyp_CC-sub-min}  is equivalent to $\lim\sup \log(n)/\log(N)<1$, which is always true since $n^2=o(N)$. Similarly, we assume that $I_{\lambda_1}=o(\log(n))$ while keeping Condition \eqref{eq:hyp_CC-sub-min} true since for $I_{\lambda_1}$ going to infinity, \eqref{eq:hyp_CC-sub-min} is equivalent to $\lim\sup \frac{I_{\lambda_0}\log(n)}{I_{\lambda_1}\log(N)}<1$ and since $I^{-1}_{\lambda_0}\log(N)\to \infty$. By Condition \eqref{eq:hyp_CC-sub-min}, there exists a constant $c>0$ such that 
\beq\label{eq:condition_minoration}
 \lim \sup \ \frac{I_{\lambda_0}}{\lambda_0+ I_{\lambda_1}-\lambda_0e^{I_{\lambda_1}}}\frac{\log(n)}{\log(N)}< 1-c\ .
\eeq 

We shall prove that with probability $\P_S$ going to one, the largest connected component of $\cG$ does not intersect $S$. As the distribution of the statistic under the alternative dominates the distribution under the null, this will imply that the largest connected component test is asymptotically powerless. \\
Denote $\cA$ the event that, for all $(x,y) \in S$, there is no path between $x$ and $y$ with all other nodes in $S^c$.
For any subset $T$, denote $\cC_T(x)$ the connected component of $x$ in $\cG_T$, and recall that $\cC(x)$ is a shorthand for $\cC_{\cV}(x)$.
By symmetry, we have 
\[\P_S[\cA^c]\leq n^2 \P_0[y\in \cC_{-S}(x)]\leq \P_0[y\in \cC(x)]\ , \]
since the probability of the edges outside $\cG_S$ under $\P_S$ is the same as under $\P_0$. Again, by symmetry 
\[\P_0[y\in \cC(x)]= \E_{0}[\P_0[y\in C(x)]\, | \, |\cC(x)|]\leq \E_{0}\left[\frac{|\cC(x)|}{N-1}\right]\leq \frac{1}{(N-1)(1-\lambda_0)}\ , \]
as the expected size of a cluster is dominated by the expected progeny of a branching process with parameters $N$ and $p_0$ (Lemma \ref{lem:stochastic_domination})
and the expected progeny of a subcritical branching process having mean offspring $\mu<1$ is $(1-\mu)^{-1}$ \citep[Th.~3.5]{remco:lecture}. Thus,
\beq\label{eq:upper_bound_proba_A}
\P_S[\cA^c]=O( n^2/N)=o(1)\ .
\eeq 
Define \beq\label{eq:defik} k:=(1-c)^{1/2}\log(N)I_{\lambda_0}^{-1}\ .\eeq
Since $\lim\sup \lambda_0<1$ and since $\log\log(N)=o[\log(n)]$, it follows that $k\asymp\log(N)=n^{o(1)}$.
By Lemma \ref{lem:subcritic_cluster_lower}, $|\cC_{\max}|$ is larger or equal to $k$ with probability $\P_S$ (and $\P_0$) going to one. Thus, it suffices to  prove that 
$\P_{S}[\vee_{x\in S}|\cC(x)|\geq k] \to 0$. Observe that
\[\P_{S}[\vee_{x\in S}|\cC(x)|\geq k]\leq n \P_{S}\left[\left\{|C(x)|\geq k\right\}\cap \cA\right] +\P_S[\cA^c]\ ,
\]
 so that, by \eqref{eq:upper_bound_proba_A}, we only need to prove that  $n\P_{S}\left[\left\{|C(x)|\geq k\right\}\cap \cA\right]=o(1)$. Under the event $\cA$, $\cC(x)\cap S$ is exactly the connected component $\cC_S(x)$ of $x$ in $\cG_S$.  
 Furthermore, $\cC(x)$ is the union of $\cC_{-S}(y)$  over $y\in \cC_S(x)$.
Consequently, we have the decomposition
\beqn 
\P_{S}\left[\left\{|\cC(x)|\geq k\right\}\cap \cA\right] 
&\leq& \P_S[|\cC_S(x)|\geq k] \\
&& + \sum_{q=1}^{k-1}\P_S[|\cC_{S}(x)|=q]\P_S\left[ \left.\cB_q \, \right| |\cC_S(x)|=q\right]\ ,
\eeqn
where $\cB_q:= \{|\cup_{y\in \cC_{S}(x)} \cC_{-S}(y)| \geq k\}$.
By Lemma \ref{lem:stochastic_domination}, the distribution of $|\cC_{S}(x)|$ is stochastically dominated by the total progeny distribution of a binomial branching process with parameters $(n,\lambda_1/n)$. Denote by $\cJ$ any set of nodes of size $q$. Since, conditionally to $ |\cC_S(x)|=q$, the event $\cB_q$ is increasing and only depends on the edges outside $\cG_S$, we have
\[\P_S\left[ \left.\cB_q \right| |\cC_S(x)|=q\right]\leq \P_0\left[|\cup_{y\in \cJ}\cC(y)|\geq k\right]\ ,\]
which is in turn, by Lemma \ref{lem:stochastic_domination}, smaller than the probability that the total progeny of $q$ independent branching processes with parameters $(N,\lambda_0/N)$ is larger than $k$. 
Relying on the law of the total progeny of branching processes (Lemma \ref{lem:total_progeny}) and \lemref{stochastic_domination}, we get
\beqn
\P_S[|\cC_{S}(x)|=q]
&\leq & \frac{1}{q}\P\left[\mathrm{Bin}(nq,\lambda_1/n)= q-1\right] \ ,\\
\P_S\left[ \left.\cB_q \, \right| |\cC_S(x)|=q\right]& \leq &\sum_{r=k}^{\infty}\frac{q}{r}\P\left[\mathrm{Bin}(Nr,\lambda_0/N)=  r-q\right]\ .
\eeqn
Working out the density of the binomial random variable, we derive
\beqn	
\P_S[|\cC_{S}(x)|=q]&\leq & \binom{nq}{q-1}p_1^{q-1}(1-p_1)^{nq-q+1} \prec \frac{1}{\lambda_1}e^{-I_{\lambda_1}q}\ ,\\
\eeqn
and for $q\leq (1-\lambda_0)k$, we get 
\beqn
\P_S\left[ \left.\cB_q \, \right| |\cC_S(x)|=q\right]& \leq& \frac{q}{k} \exp\left[-N k H_{p_0}\left(\frac{k-q}{N k}\right)\right]\ , 
\eeqn
which is exaclty the term \eqref{eq:upU1}, which has been proved in \eqref{eq:upper_bound_AK} to be smaller than 
\[ O(k^2)\left(\frac{k-q}{k}\right)^{k-q}  e^{-(k-q)I_{\lambda_0}-q\lambda_0} \ .\]
 Let define 
\[B_{\ell}:= e^{-I_{\lambda_1}\ell-\ell\lambda_0-(k-\ell)I_{\lambda_0}}\left(\frac{k}{k-\ell}\right)^{k-\ell}\]
 Gathering all these bounds, we get 
 \beqn
\P_{S}\left[\left\{|C(x)|\geq k\right\}\cap \cA\right]&\prec& \frac{e^{-I_{\lambda_1}k}}{\lambda_1}+ \sum_{q=\lceil (1-\lambda_0)k\rceil  }^{k-1}\frac{e^{-I_{\lambda_1}q}}{\lambda_1} +O\left(\frac{k^2}{\lambda_1}\right)\sum_{q=1}^{\lfloor(1-\lambda_0)k\rfloor}
B_q\\
&\prec & \frac{k^3}{\lambda_1}\big[e^{-I_{\lambda_1}(1-\lambda_0)k}+ \bigvee_{q=1}^k B_q\big]
\prec  n^{o(1)} \sup_{q\in[0;k]} B_q\ ,
\eeqn
where we observe that  $e^{-I_{\lambda_1}(1-\lambda_0)k}=B_{(1-\lambda_0)k}$  and we use $k=n^{o(1)}$ and $I_{\lambda_1}=o(\log(n))$.
 By differentiating $\log(B_q)$ as a function of $q$, we obtain the maximum
\beqn
\sup_{q\in[0;k]}B_q \leq \left\{\begin{array}{ccc}
e^{-kI_{\lambda_0}}\ ,&\text{ if }& \lambda_0e^{I_{\lambda_1}}>1\ ;\\
e^{-I_{\lambda_1}k}\exp\left[\lambda_0k(e^{I_{\lambda_1}}-1)\right]\ , & \text{ else .}&
                                                                                                     \end{array}\right.
\eeqn
Recall that we assume  $\lambda_0e^{I_{\lambda_1}}\leq 1$ so that 
\beqn
\P_{S}\left[\left\{|C(x)|\geq k\right\}\cap \cA\right]&\prec& n^{o(1)}\frac{1}{\lambda_1}\exp\left[-k\left\{\lambda_0+ I_{\lambda_1}- \lambda_0e^{I_{\lambda_1}}\right\}\right] \\
&\prec& n^{-(1-c)^{-1/2}+o(1)}\ ,
\eeqn
by definition \eqref{eq:defik} of $k$ and Condition \eqref{eq:condition_minoration}. We conclude that 
\[n \P_{S}\left[\left\{|C(x)|\geq k\right\}\cap \cA\right]=o(1).\]
\end{proof}

\subsubsection{Supercritical regime} 
We now briefly discuss the behavior of the largest connected component test in the supercritical regime where $\liminf \lambda_0 > 1$.  When $\lambda_0 - \log N \to \infty$, the graph $\cG$ is connected with probability tending to one under the null and under any alternative \citep[Th.~5.5]{remco:lecture}, which renders the test completely useless.  We focus on the case where $\lambda_0$ is fixed for the sake of simplicity.  In that regime, we find that, in that case, the test performs roughly as well as the total degree test --- compare \prpref{total}. 

\begin{prp}[Supercritical largest connected component test] \label{prp:CC-sup}
The largest connected component test is asymptotically powerful when $\lambda_1 > \lambda_0 > 1$ are fixed and $n^2/N \to \infty$.
\end{prp}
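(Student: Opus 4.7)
The plan is to show that $|\Cmax|$ separates the two hypotheses, exploiting that under $H_0$ it concentrates around $(1-\eta_{\lambda_0})N$ with fluctuations of order $\sqrt N$, while under $H_S$ its mean is higher by an amount of order $n$, which dominates $\sqrt N$ since $n^2/N\to\infty$.

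First, under $H_0$, I would invoke the classical CLT for the giant component in a supercritical Erd\"os-R\'enyi graph (Barraez--Boucheron--Fernandez de la Vega, Pittel; see \citep{remco:lecture}) to obtain $|\Cmax|-(1-\eta_{\lambda_0})N = O_{\P_0}(\sqrt N) = o_{\P_0}(n)$, using $n^2/N\to\infty$. Second, under $H_S$, I would produce two large connected pieces of $\cG$ and argue that they are whp joined together. The edges of $\cG$ split into three independent groups: edges inside $\cG_S$, edges inside $\cG_{\cV\setminus S}$, and bipartite edges between $S$ and $\cV\setminus S$, the latter being iid Bernoulli$(p_0)$ even under $\P_S$. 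The two induced subgraphs are $\bbG(n,\lambda_1/n)$ and $\bbG(N-n,\lambda_0/N)$, both supercritical (the second since $\lambda_0>1$ is fixed and $n/N\to 0$). Let $A\subset S$ and $B\subset \cV\setminus S$ be their respective giant components. By \lemref{Cmax} applied to $\cG_S$ and the same CLT applied to $\cG_{\cV\setminus S}$, I have $|A|=(1-\eta_{\lambda_1})n+o_{\P_S}(n)$ and $|B|=(1-\eta_{\lambda_0})(N-n)+o_{\P_S}(n)$. Conditionally on $|A|$ and $|B|$, the number of bipartite edges between $A$ and $B$ is $\Bin(|A|\cdot|B|,p_0)$ with mean $\sim (1-\eta_{\lambda_1})(1-\eta_{\lambda_0})\lambda_0\,n\to\infty$, so at least one such edge exists with probability tending to one. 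Hence $A\cup B$ lies in one connected component of $\cG$, giving
\[
|\Cmax|\geq |A|+|B|=(1-\eta_{\lambda_0})N+(\eta_{\lambda_0}-\eta_{\lambda_1})n+o_{\P_S}(n).
\]
Since $\lambda\mapsto \eta_\lambda$ is strictly decreasing on $(1,\infty)$, $\eta_{\lambda_0}-\eta_{\lambda_1}$ is a fixed positive constant, and the test rejecting when $|\Cmax|\geq (1-\eta_{\lambda_0})N+(\eta_{\lambda_0}-\eta_{\lambda_1})n/2$ has risk tending to zero.

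The main technical obstacle is the upper-tail control of $|\Cmax|$ under $H_0$: one needs concentration at scale $o(n)$, which is stronger than the law of large numbers of \lemref{Cmax}. The CLT is the cleanest tool, but one could alternatively use Chebyshev's inequality with Pittel's $O(N)$ variance bound, or an exponential deviation inequality. A minor sanity check is that the induced $\bbG(N-n,\lambda_0/N)$ has effective mean degree $\lambda_0(1-n/N)\to\lambda_0>1$, so all supercritical results still apply to it.
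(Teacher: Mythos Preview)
Your proposal is correct and follows essentially the same approach as the paper: invoke the CLT for the giant component to get $O(\sqrt N)$ fluctuations under the null, then under the alternative take the giant components of $\cG_S$ and $\cG_{S^\comp}$, show they are connected with high probability via a bipartite edge (since $p_0|A|\,|B|\asymp n\to\infty$), and conclude that $|\Cmax|$ exceeds $(1-\eta_{\lambda_0})N$ by order $n\gg\sqrt N$. The paper's proof is the same, with the only cosmetic difference being that it conditions on all of $\cG_S$ and $\cG_{S^\comp}$ (rather than just $|A|,|B|$) before revealing the bipartite edges.
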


\begin{proof}
We keep the same notation.  Under $\P_0$, we have $|\Cmax| = (1 - \eta_{\lambda_0}) N + O(\sqrt{N})$ \citep[Th.~4.16]{remco:lecture}.  Hereafter, assume that we are under $\P_S$.  Then, by the same token, $|\Cmax^{S^\comp}| = (1 - \eta_{\lambda_0}) (N-n) + O(\sqrt{N-n})$ and $|\Cmax^{S}| = (1 - \eta_{\lambda_1}) n + O(\sqrt{n})$.  Given $\cG_S$ and $\cG_{S^\comp}$, the probability that $\Cmax^{S^\comp}$ and $\Cmax^{S}$ are connected in $\cG$ is equal to $1 - (1-p_0)^{|\Cmax^{S^\comp}| \ |\Cmax^{S}|} \to 1$ in probability, since $p_0 |\Cmax^{S^\comp}| \, |\Cmax^{S}| \asymp n$ in probability.  Hence, with probability tending to one,
\beqn
|\Cmax| 
&\ge& |\Cmax^{S^\comp}| + |\Cmax^{S}| \\
&=& (1 - \eta_{\lambda_0}) (N-n) + O(\sqrt{N}) + (1 - \eta_{\lambda_1}) n + O(\sqrt{n}) \\
&=& (1 - \eta_{\lambda_0}) N + (\eta_{\lambda_0} - \eta_{\lambda_1}) n + O(\sqrt{N}) \ ,
\eeqn
with $\eta_{\lambda_0} - \eta_{\lambda_1} > 0$ since $\lambda_1 > \lambda_0 > 1$ and $\eta_\lambda$ is strictly decreasing.  Hence, because $n \gg \sqrt{N}$ by assumption, the test that rejects when $|\Cmax| \ge (1 - \eta_{\lambda_0}) N + \frac12 (\eta_{\lambda_0} - \eta_{\lambda_1}) n$.
\end{proof}

When $\lambda_0 > 1$ is fixed, the largest connected component is of size $|\Cmax|$ satisfying 
\[\frac{|\Cmax| - (1-\eta_{\lambda_0}) N}{\sqrt{N}} \to \cN(0,1), \quad \text{under }\P_0 \ ,\]
by \citep[Th.~4.16]{remco:lecture}, while $|\Cmax|$ increases by at most $n$ under the alternative, so the test is powerless when $n = o(\sqrt{N})$.

\subsection{The number of $k$-trees} \label{sec:k-tree}

We consider the test that rejects for large values of $\tree_k$, the number of subtrees of size $k$.  This test will partially bridge the gap in constants between what the broad scan test and largest connected component test can achieve in the regime where $\lambda_0$ is constant.  
Recall the definition of $I_\lambda$ in \eqref{I}.

\begin{thm}\label{thm:k-tree}
Assume that $\lambda_1$ and $\lambda_0$ are both fixed, with $0 < \sqrt{\lambda_0/e} < \lambda_1 < 1$, and that
\beq \label{k-tree}
\limsup \frac{\log(N/n^2)}{\log n} < \frac{I_{\frac{\lambda_0}{\lambda_1e}}- I_{\sqrt{\frac{\lambda_0}{e}}}}{\big(1 - \frac{\lambda_0}{\lambda_1 e}\big) I_{\frac{\sqrt{\lambda_1}}{e}}} \ .
\eeq  
Then there is a constant $c > 0$ such that the test based on $\tree_k$ with $k = c \log n$ is asymptotically powerful.
\end{thm}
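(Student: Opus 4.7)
The statistic is $\tree_k = \sum_T \1(T \subseteq \cG)$, where $T$ ranges over the $\binom{N}{k}k^{k-2}$ labeled trees on $k$ vertices in $[N]$ (Cayley's formula). Under $H_0$, $\P_0(T\subseteq\cG)=p_0^{k-1}$, so Stirling yields
\[
\E_0[\tree_k] \;=\; \binom{N}{k} k^{k-2} p_0^{k-1} \;\asymp\; \frac{N}{k^{5/2}}(e\lambda_0)^{k-1},
\]
while under $H_S$ the trees lying entirely in $S$ produce an extra contribution
\[
M_S \;:=\; \binom{n}{k} k^{k-2} (p_1^{k-1}-p_0^{k-1}) \;\asymp\; \frac{n}{k^{5/2}} (e\lambda_1)^{k-1}
\]
to $\E_S[\tree_k]$, the remaining trees behaving as under $H_0$. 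The plan is to take $k = c\log n$ for a constant $c>0$ calibrated via \eqref{k-tree}, set the threshold at $t := \E_0[\tree_k] + \tfrac12 M_S$, and apply Chebyshev's inequality to kill both error probabilities. Since $e\lambda_1^2/\lambda_0 > 1$ under the standing hypothesis $\lambda_1 > \sqrt{\lambda_0/e}$, the ratio $M_S/\E_0[\tree_k]$ grows geometrically in $k$, so for $c$ large enough (relative to $\log(N/n^2)/\log n$) one already has $M_S \gg \E_0[\tree_k]$. This first balance would suffice if $\tree_k$ were Poissonian under $H_0$; but trees are unbalanced subgraphs, and the real work is the variance bound.

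The main step, and the main obstacle, is the second-moment estimate
\[
\Var_0[\tree_k] \;=\; \E_0[\tree_k] + \sum_{j=1}^{k-1}\;\sum_{\substack{(T_1,T_2) \\ |T_1\cap T_2|_e = j}} \bigl(p_0^{2(k-1)-j} - p_0^{2(k-1)}\bigr),
\]
where $|T_1\cap T_2|_e$ is the number of shared edges. For each overlap size $j$ I would enumerate pairs by conditioning on the common forest $F$ (with $v_F$ vertices and $j$ edges) and using a Cayley-type count for the number of labelled $k$-trees containing $F$; Stirling then turns the double sum into an exponential integral whose maximum, optimized over the fraction $j/k$ by a Lagrangian argument, is captured by the rate-function quantities $I_{\lambda_0/(\lambda_1 e)}$ and $I_{\sqrt{\lambda_0/e}}$ that enter \eqref{k-tree}. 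The output is a bound of the shape
\[
\Var_0[\tree_k] \;\le\; \E_0[\tree_k]\bigl(1 + \E_0[\tree_k]\, e^{-k\eta}\bigr),
\]
for a constant $\eta>0$ representing the slack above the critical exponent. Condition \eqref{k-tree} is precisely the statement that a suitable $c>0$ exists for which $k=c\log n$ makes $\E_0[\tree_k]^2 e^{-k\eta} = o(M_S^2)$.

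Armed with this variance estimate, Chebyshev gives $\P_0(\tree_k \ge t) \le 4\Var_0[\tree_k]/M_S^2 = o(1)$. Under $H_S$, the variance splits into contributions from trees sitting outside $S$ or straddling its boundary (which match the $H_0$ variance), and from trees entirely inside $S$ (which are Poisson-type fluctuations of order $M_S$, itself $o(M_S^2)$ since $M_S\to\infty$ by the choice of $c$); hence $\P_S(\tree_k \le t) = o(1)$ as well. The crux is the enumeration of overlapping tree pairs in the variance computation: trees are not strictly balanced subgraphs, so the dominant overlap size is an interior maximizer that has to be identified sharply, and pinning it down (together with the polynomial Stirling prefactors) is what ultimately produces the somewhat intricate form of condition \eqref{k-tree}.
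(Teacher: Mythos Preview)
Your overall strategy (Chebyshev on $\tree_k$, with the variance driven by overlapping tree pairs) matches the paper's, but there is a genuine gap in the choice of signal, and it prevents you from reaching condition~\eqref{k-tree}.

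You take as signal $M_S \asymp \tfrac{n}{k^{5/2}}(e\lambda_1)^k$, the excess expected count coming from trees whose $k$ vertices lie \emph{entirely} in $S$. The paper instead lower-bounds $\tree_k$ by $\tree_{k,S^\comp}+\tree_{k,S,q}$, where $\tree_{k,S,q}$ counts $k$-trees with exactly $q$ vertices in $S$ (and whose restriction to $S$ is itself a tree), and optimizes $q$. The optimal choice is $q=k-\lfloor \tfrac{\lambda_0}{\lambda_1 e}k\rfloor$, which yields
\[
\E_S[\tree_{k,S,q}] \;\succ\; \frac{n}{k^3}\,(e\lambda_1)^k\,e^{k\lambda_0/(\lambda_1 e)} \ ,
\]
an exponential improvement of $e^{k\lambda_0/(\lambda_1 e)}$ over your $M_S$. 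This gain is not cosmetic: it is precisely what makes $I_{\lambda_0/(\lambda_1 e)}$ appear in \eqref{k-tree}. With your $M_S$, the key ratio becomes
\[
\frac{\Var_0[\tree_k]}{M_S^2} \;\prec\; \frac{Nk^4}{n^2}\exp\!\Big[k\big(2\sqrt{\lambda_0/e}-\log(e\lambda_1^2/\lambda_0)\big)\Big] \ ,
\]
and the bracketed exponent can be \emph{positive} under the theorem's hypotheses (e.g.\ $\lambda_0=2.5$, $\lambda_1=0.98$, where it is about $1.87$). In that range no choice of $k=c\log n$ makes $\Var_0/M_S^2\to 0$, so your argument cannot close. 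The paper's extra factor $e^{-2k\lambda_0/(\lambda_1 e)}$ in the ratio is exactly what turns this exponent into $2\big(I_{\sqrt{\lambda_0/e}}-I_{\lambda_0/(\lambda_1 e)}\big)<0$, which is always negative because $I$ is decreasing on $(0,1)$ and $\tfrac{\lambda_0}{\lambda_1 e}<\sqrt{\lambda_0/e}$.

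A secondary issue: your treatment of $\Var_S$ is too optimistic. The contribution from trees inside $S$ is \emph{not} ``Poisson-type of order $M_S$''; trees are unbalanced, so the same overlap mechanism that inflates $\Var_0$ inflates this term to order $n(e\lambda_1)^k e^{2k\sqrt{\lambda_1/e}}$, which forces an upper bound on $c$ and is the source of the factor $I_{\sqrt{\lambda_1}/e}$ in \eqref{k-tree}. In the paper this constraint comes from $\Var_S[\tree_{k,S,q}]$ (their \lemref{alternative_tree}), and the interval of admissible $c$ is nonempty exactly when \eqref{k-tree} holds. Your sketch does not produce either endpoint of that interval.
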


Thus, even in the supercritical  Poissonian regime with $1<\lambda_0<e$, there exist subcritical communities $\lambda_1<1$ that are asymptotically detectable with probability going to one. The condition $\lambda_1>\sqrt{\lambda_0/e}$ will be shown to be minimal in Theorem \ref{thm:lower-no}. Condition \eqref{k-tree} essentially requires that $n^2/N$ does not converge too fast to zero.   In particular, when $n = N^\kappa$, \eqref{k-tree} translates into an upper bound on $\kappa$.  We show later in Theorem \ref{thm:lower-no} that such an upper bound is unavoidable, for when $\kappa$ is too small, no test is asymptotically powerful.  Nevertheless, Condition \eqref{k-tree} is in all likelihood not optimal.

\begin{proof}[Proof of \thmref{k-tree}]
Let $\tree_k$ denote the number of subtrees of size $k$.
We first compute the expectation of $\tree_k$ under $\P_0$ using Cayley's formula.
Since $k^2=o(n)=o(N)$ and $k\rightarrow \infty$, we derive
\begin{eqnarray*}
 \E_0[\tree_k]
 &=&\sum_{|C| = k} \P_0[\text{$\cG_C$ is a tree}] \\
 &= &\binom{N}{k}k^{k-2}p_0^{k-1}(1-p_0)^{k^{(2)}-k+1} \\
&\sim& N(\lambda_0e)^k\frac{1}{\sqrt{2\pi}k^{5/2}\lambda_0}\ ,
\end{eqnarray*}
where we used the fact any $k$-tree has exactly $k-1$ edges.  The last line comes from an application of Stirling's formula.
We then bound the variance of $\tree_k$ under $\P_0$ in the following lemma, whose lengthy proof is postponed to \secref{Nkt_var0}.

\begin{lem}\label{lem:Nkt_var0}
When $\lambda_0 < e$, we have
\[ 
\Var_0[\tree_k] \prec \frac{N}{k\lambda_0}(e\lambda_0)^{k}e^{2k\sqrt{\lambda_0/e}} \ .
\]
\end{lem}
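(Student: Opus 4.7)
The plan is a second moment expansion decomposed by vertex overlap between two candidate $k$-subsets. Write
\[
\Var_0[\tree_k] = \sum_{C, C' \subset \cV,\, |C|=|C'|=k} \Cov_0\bigl(\1_{\cG_C \text{ is a tree}},\, \1_{\cG_{C'} \text{ is a tree}}\bigr),
\]
and set $j := |C \cap C'|$. Since $\1_{\cG_C \text{ tree}}$ depends only on the edges internal to $C$, the covariance vanishes for $j \le 1$. The diagonal $j = k$ contributes at most $\mu_k = \E_0[\tree_k] \sim N(e\lambda_0)^k/(\sqrt{2\pi}k^{5/2}\lambda_0)$, which is comfortably dominated by the target bound when $\lambda_0 < e$ since the target exceeds $\mu_k$ by a factor of order $k^{3/2}e^{2k\sqrt{\lambda_0/e}}$. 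So it remains to bound $V_j := \sum_{|C \cap C'|=j} \Cov_0(\cdots)$ for $2 \le j \le k-1$.

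For a fixed pair $(C,C')$ with $I := C \cap C'$ of size $j$, the joint event ``both $\cG_C, \cG_{C'}$ are trees'' forces the unique spanning trees $t_1 = \cG_C$, $t_2 = \cG_{C'}$ to share a common restriction $f := t_1 \cap \binom{I}{2} = t_2 \cap \binom{I}{2}$, which must be a forest on $I$. Conditioning on this shared forest gives
\[
\P_0(\cG_C, \cG_{C'} \text{ trees}) = \sum_{f \text{ forest on } I} N_k(f)^2 \, p_0^{2(k-1)-e(f)}(1-p_0)^{2\binom{k}{2}-\binom{j}{2}-2(k-1)+e(f)},
\]
where $N_k(f) := \#\{t \text{ spanning tree of } K_C : t|_I = f\}$. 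By the Cayley--Borchardt formula applied to $f$ viewed as a spanning forest of $C$ (with the $k-j$ vertices in $C\setminus I$ as singletons), whose components on $C$ have sizes $s_1, \ldots, s_{j-e(f)}, 1, \ldots, 1$,
\[
N_k(f) \le k^{k-e(f)-2}\prod_{i} s_i.
\]

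Substituting this bound, multiplying by the number of ordered pairs $\binom{N}{k}\binom{k}{j}\binom{N-k}{k-j}$, and regrouping by $(m, \mathbf{s}) = (e(f), \text{component-size vector on } I)$ using the Rényi-type count for labeled forests on $[j]$ with a prescribed component-size type, produces a triple sum of Laplace type in $(j, m, \mathbf{s})$. Stirling's formula handles the factorials, and the optimal $(j,m)$ ridge is extracted by saddle-point analysis: one balances the combinatorial weight $k^{-2m}(\prod s_i)^2$ against the edge weight $p_0^{-m}(1-p_0)^{m-\binom{j}{2}}$, which yields a critical scale $m \asymp k\sqrt{\lambda_0/e}$ and produces the factor $e^{2k\sqrt{\lambda_0/e}}$; summation over $j$ against the pair count contributes the $N/(k\lambda_0)$ prefactor.

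The main obstacle is the careful bookkeeping of this $(j,m,\mathbf{s})$ sum and, in particular, verifying that the saddle-point estimate is tight up to the claimed rate rather than merely giving an exponential bound of similar but looser form. The hypothesis $\lambda_0 < e$ is essential throughout: it is precisely the condition $\sqrt{\lambda_0/e} < 1$ that guarantees the off-saddle geometric convergence of the Laplace series and identifies $m \asymp k\sqrt{\lambda_0/e}$ as an interior maximum (rather than a boundary effect at $m \in \{0, j-1\}$), without which the bound would degrade.
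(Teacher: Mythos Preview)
Your decomposition by overlap size and conditioning on the intersection forest is correct and matches the paper's approach. However, your saddle-point heuristic misidentifies the dominant configuration, and this is a genuine gap.

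You claim the critical edge count is $m \asymp k\sqrt{\lambda_0/e}$, obtained by balancing combinatorial and edge weights. This is wrong: for each fixed overlap size $j$, the maximum over forest structures is attained at the \emph{boundary} $m = j-1$ (i.e., the intersection is a single tree, $r=1$ component), not at an interior point. The reason is that passing from $r$ to $r+1$ components costs a factor of order $p_0 k^{O(1)}$ in probability (one more edge must appear outside the shared part) while gaining only a polynomial-in-$k$ factor in the configuration count; since $p_0 k^{O(1)} = \lambda_0 k^{O(1)}/N \to 0$, the single-tree case dominates. The paper makes exactly this observation: using the crude forest bound $F_{q,r}\le q^{q-2}$ and the Cayley extension formula, it shows $\sum_{r\ge 2}B_{r,q}=o(B_{1,q})$ because $p_0 k^8=o(1)$, collapsing the problem to a single sum over the overlap size $q$.

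What is actually $\asymp k\sqrt{\lambda_0/e}$ is the \emph{non-overlap} $k-q$, not $m$: after reducing to $r=1$, the remaining sum over $q$ involves $A_{k-q}=(k\sqrt{e\lambda_0}/(k-q))^{2(k-q)}$, whose maximum over $\ell=k-q$ occurs at $\ell=k\sqrt{\lambda_0/e}$ and equals $e^{2k\sqrt{\lambda_0/e}}$; this is where the exponential factor comes from, and the condition $\lambda_0<e$ ensures this optimum lies in the interior $0<\ell<k$. So the mechanism you describe is right in spirit but attached to the wrong variable, and your proposed joint $(j,m,\mathbf{s})$ Laplace analysis, if carried out as you describe it, would not locate the correct ridge.
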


By Chebyshev's inequality, under $\P_0$, 
\[
\tree_k = \E_0 \tree_k + O\big(\Var_0(\tree_k)\big)^{1/2} \ .
\]

\medskip 
Fix $S \subset \cV$ of size $|S| = n$, and let $q$ be an integer between $1$ and $k$ chosen later.  We let $\tree_{k,S^\comp}$ denote the number of $k$-trees in $\cG_{S^\comp}$, and let $\tree_{k,S,q}$ as the number of subsets $C$ of size $k$ such that $|C\cap S|=q$ and both $\cG_{C\cap S}$ and $\cG_{C}$ are trees. 
We have $\tree_k\geq \tree_{k,S^\comp}+ \tree_{k,S,q}$. 
Therefore, by Chebyshev's inequality, under $\P_S$ 
\[
\tree_k \ge \E_S\big(\tree_{k,S^\comp}\big)+ \E_S\big({\tree_{k,S,q}}\big) + O\big(\Var_S(\tree_{k,S^\comp})\big)^{1/2} + O\big(\Var_S(\tree_{k,S,q})\big)^{1/2} \ .
\]

Noting that $\cG_{S^\comp} \sim \bbG(N-n, p_0)$, and letting $\lambda_0' = (N-n) p_0$, \lemref{Nkt_var0} implies that
\[
\Var_S[\tree_{k,S^\comp}] \prec \frac{N-n}{k\lambda_0'}(e\lambda_0')^{k}e^{2k\sqrt{\lambda_0'/e}} \sim \frac{N}{k\lambda_0}(e\lambda_0)^{k}e^{2k\sqrt{\lambda_0/e}} \ ,  
\]
because $nk=o(N)$. 
Thus,  we only need to show that, for a careful choice of $q$, 
\begin{eqnarray}
\E_S[\tree_{k,S,q}]& \gg &\E_0[\tree_k] - \E_S[\tree_{k,S^\comp}]\ , \label{cheb1}\\
\E_S^2[\tree_{k,S,q}] &\gg& \Var_S[\tree_{k,S,q}] \label{cheb2} \ , \\
\E_S^2[\tree_{k,S,q}] &\gg& \frac{N}{k\lambda_0}(e\lambda_0)^{k}e^{2k\sqrt{\lambda_0/e}} \succ \Var_0[\tree_k] \label{cheb3} \ .
\end{eqnarray}
From now on, let $q=k-\lfloor \frac{\lambda_0}{\lambda_1 e}k\rfloor$.

We use the following lemma, whose lengthy proof is postponed to \secref{alternative_tree}.
\begin{lem}\label{lem:alternative_tree}
When $q=k-\lfloor \frac{\lambda_0}{\lambda_1 e}k\rfloor$, we have
\beq \label{eq:upper_ES}
\mathbb{E}_S[\tree_{k,S,q}] \succ n \frac{\lambda_1^{k-1}e^{2k-q}}{k^{3}}\succ n (e\lambda_1)^k e^{\frac{\lambda_0}{\lambda_1e}k} \frac{1}{\lambda_1k^{3}}\ ,
\eeq
and
\beq \label{eq:upper_VARS}
\Var_S[\tree_{k,S,q}] \prec nk^2 \lambda_1^{2k-q-1}e^{4k-2q}  e^{2\frac{\sqrt{\lambda_1}}{e}q} +\frac{k^7n^2}{N}\lambda_1^{2k-2 }\lambda_0e^{4k-2q}\ .
\eeq
\end{lem}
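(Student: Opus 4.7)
The plan is to compute $\E_S[\tree_{k,S,q}]$ in closed form, then bound $\E_S[\tree_{k,S,q}^2]$ by decomposing the sum over ordered pairs $(C,C')$ according to their overlap.

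\textbf{Expectation.} The key combinatorial input is the number of labeled spanning trees of $K_k$ on a vertex set $C$ whose restriction to a prescribed $q$-subset $A\subset C$ is itself a tree. Every such tree decomposes uniquely as a tree $T_A$ on the $q$ vertices of $A$ (there are $q^{q-2}$ by Cayley) together with a rooted forest on all $k$ vertices with $A$ as its $q$ roots (there are $q\,k^{k-q-1}$ by the generalized Cayley formula), giving $q^{q-1}k^{k-q-1}$ in total. Hence
\[
\E_S[\tree_{k,S,q}] = \binom{n}{q}\binom{N-n}{k-q}\,q^{q-1}k^{k-q-1}\,p_1^{q-1}p_0^{k-q}\,(1-p_1)^{\binom{q}{2}-(q-1)}(1-p_0)^{\binom{k}{2}-\binom{q}{2}-(k-q)},
\]
and the last two factors are $1+o(1)$ in the Poissonian regime. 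Applying Stirling to the two binomials and substituting $p_0=\lambda_0/N$, $p_1=\lambda_1/n$ yields
\[
\E_S[\tree_{k,S,q}] \sim \frac{n\,\lambda_1^{k-1} e^{2k-q}}{k\,q\sqrt{4\pi^2 q(k-q)}}.
\]
Since $q,k-q\asymp k$ for our choice of $q$, the first inequality of \eqref{eq:upper_ES} follows. The second inequality follows from $k-q\sim \lambda_0 k/(\lambda_1 e)$, which gives $e^{2k-q}=e^k e^{k-q}\sim e^k e^{\lambda_0 k/(\lambda_1 e)}$ and the desired rearrangement.

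\textbf{Variance.} Expand $\E_S[\tree_{k,S,q}^2]$ as a sum over ordered pairs $(C,C')$ and decompose by the overlap type $(j,j_S)$, where $j=|C\cap C'|$ and $j_S=|C\cap C'\cap S|$, with $\max(0,j-(k-q))\le j_S\le \min(q,j)$. The number of pairs with prescribed type is
\[
\binom{n}{q}\binom{q}{j_S}\binom{n-q}{q-j_S}\binom{N-n}{k-q}\binom{k-q}{j-j_S}\binom{N-n-k+q}{k-q-(j-j_S)}.
\]
Given such a pair, the joint probability that both are good is obtained by summing over subforests $F$ on $C\cap C'$ of the probability that $F$ is exactly $\cG_{C\cap C'}$ and that $F$ extends to a good tree on each of $C$ and $C'$. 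A second use of the generalized Cayley formula, now counting labeled trees on $C$ (resp.\ $C'$) that contain the prescribed forest $F$ and have a prescribed $q$-subtree on the $S$-part, gives a closed-form bound, and the edge-probability contribution factors cleanly because shared edges are counted once (with probability $p_1$ or $p_0$ according to whether both endpoints lie in $S$). The pairs with $j=0$ produce exactly $\E_S[\tree_{k,S,q}]^2$ (the $j\ge 1$ pairs only lower this count), so the centered variance is bounded by the sum over $1\le j\le k$ and admissible $j_S$. Each $(j,j_S)$-contribution is, after Stirling, an explicit exponential in $k$; optimizing over $j_S$ for fixed $j$, the pairs with overlap concentrated inside $S$ are extremised at the Poissonian critical value $j_S\approx q\sqrt{\lambda_1}/e$, producing the first term $nk^2\lambda_1^{2k-q-1}e^{4k-2q}e^{2q\sqrt{\lambda_1}/e}$, while the pairs with overlap concentrated outside $S$ (with $j-j_S\ge 1$ non-$S$ vertices chosen freely out of $\asymp N$) produce the second term $\frac{k^7n^2}{N}\lambda_1^{2k-2}\lambda_0 e^{4k-2q}$.

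\textbf{Main obstacle.} The hardest step is the bookkeeping in the variance: accurately counting labeled ordered tree-pairs with prescribed overlap structure both in the graph and in its $S$-restriction, and tracking how shared versus unshared edges contribute different probabilities ($p_1$ inside $S$, $p_0$ outside) to the joint probability. The identification of the factor $e^{2q\sqrt{\lambda_1}/e}$ mirrors the Poissonian optimization that yields the $e^{2k\sqrt{\lambda_0/e}}$ factor in the proof of \lemref{Nkt_var0}, with the added complication that two distinct edge probabilities must be tracked separately in the $S$- and $S^c$-blocks; verifying that no other $(j,j_S)$ pattern dominates the two listed terms is the technical crux. Once the counts are in place, the summation over $(j,j_S)$ and the Stirling approximations are routine.
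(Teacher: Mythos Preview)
Your plan is correct and follows essentially the same route as the paper: the expectation via Cayley plus the extended Cayley count $q\,k^{k-q-1}$ (the paper's \lemref{cayley}), and the variance via a decomposition by the overlap sizes inside and outside $S$ (the paper writes $s=|C_1\cap C_2|$ and $r=|C_3\cap C_4|$, which is your $(j_S,\,j-j_S)$), with the two stated terms arising respectively from $s\ge 1$ and from $s=0,\ r\ge 2$.

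Two points to tighten. First, when you write ``summing over subforests $F$ on $C\cap C'$,'' the paper makes this precise by a further decomposition according to $(t_1,t_2)$, the number of components of $\cG_{C_1\cap C_2}$ and of $\cG_{(C_1\cap C_2)\cup(C_3\cap C_4)}$, and then shows via \lemref{cayley} and \lemref{combinatorics_forest} that each extra component costs a factor $p_1 k^{O(1)}$ or $p_0 k^{O(1)}$, so that $(t_1,t_2)=(1,1)$ dominates; your sketch asserts a ``closed-form bound'' here but this reduction is where most of the work lies, and it also uses the observation that $t_2<t_1$ is impossible (otherwise $\cG_{C_1\cup C_3}$ would contain a cycle). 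Second, a small slip: the Poissonian optimum occurs at $q-j_S\approx q\sqrt{\lambda_1}/e$ (this is the variable $\ell=q-s$ in the paper's $D_\ell=(q/\ell)^{2\ell}\lambda_1^\ell$), not at $j_S\approx q\sqrt{\lambda_1}/e$; the resulting factor $e^{2q\sqrt{\lambda_1}/e}$ is of course the same.
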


We first prove \eqref{cheb1}, bounding 
\beqn
 \E_0[\tree_k]-\E_S[\tree_{k,S^\comp}] &= &\left(\binom{N}{k}-\binom{N-n}{k}\right) k^{k-2}p_0^{k-1}(1-p_0)^{k^{(2)}-k+1} \\
&\le &  \left(N^k- (N-n-k)^k\right) \frac{k^{k-2}}{k!} \left(\frac{\lambda_0}N\right)^{k-1} \\
&\prec & n (\lambda_0e)^k k^{-5/2}\ ,
\eeqn
since $[1-(n+k)/N]^k= 1+kn/N+o(kn/N)$ by the fact that $k=o(n)$ and $kn=o(N)$.  We also used Stirling's formula again.  
Using this bound together with \eqref{eq:upper_ES}, we derive
\beqn
\frac{\E_S[\tree_{k,S,q}]}{\E_0[\tree_k]-\E_S[\tree_{k,S^\comp}]} \succ \frac{\lambda_0}{k^{1/2}\lambda_1}\left(\frac{\lambda_1}{\lambda_0}\right)^k e^{k\frac{\lambda_0}{\lambda_1e}}= \frac{\lambda_0}{k^{1/2}\lambda_1} \exp\left[k I_{\frac{\lambda_0}{\lambda_1e}}\right] \to \infty\ ,
\eeqn
since $\lambda_0$ and $\lambda_1$ are fixed such that $\lambda_0/\lambda_1e<1$, implying that $I_{\frac{\lambda_0}{\lambda_1e}}>0$ is fixed.

Second, we prove \eqref{cheb2}.  Using \eqref{eq:upper_ES} and \eqref{eq:upper_VARS}, we have 
\beqn
\frac{\Var_S[\tree_{k,S,q}]}{\mathbb{E}^2_S[\tree_{k,S,q}]} 
&\prec& \frac{k^8}{n}\lambda_1^{-q} e^{2\frac{\sqrt{\lambda_1}}{e}q} +\frac{k^{13}}{N} \\
&\prec& \frac{k^8}{n} \exp\left[2 k \big(1 - \frac{\lambda_0}{\lambda_1 e}\big) I_{\frac{\sqrt{\lambda_1}}{e}}\right] +\frac{k^{13}}{N} \ ,
\eeqn
and the RHS goes to $0$ as long as 
\[
\limsup  \frac{k}{\log(n)} <  \frac1{2 \big(1 - \frac{\lambda_0}{\lambda_1 e}\big) I_{\frac{\sqrt{\lambda_1}}{e}}} \ .
\]

Finally, we prove \eqref{cheb3}.   Using \lemref{Nkt_var0} and \eqref{eq:upper_ES}, we have
\beqn
\frac{\Var_0[\tree_k]}{\E_S^2[\tree_{k,S,q}]} 
&\prec& \frac{N k^5}{n^2} \left(\frac{\lambda_1^2}{e \lambda_0}\right)^k e^{2k\sqrt{\frac{\lambda_0}{e}} -4k + 2q} \\
&\prec& \frac{N k^5}{n^2} \exp\left[2k\big(I_{\sqrt{\frac{\lambda_0}{e}}} - I_{\frac{\lambda_0}{\lambda_1e}}\big)\right] \ .
\eeqn
Note that $I_{\sqrt{\frac{\lambda_0}{e}}} - I_{\frac{\lambda_0}{\lambda_1e}} < 0$ is fixed, since our assumptions imply that $\frac{\lambda_0}{\lambda_1e} < \sqrt{\frac{\lambda_0}{e}} <1$ and the function $I_\lambda$ is decreasing on $(0,1)$. Thus, the RHS above goes to $0$ as long as 
\[
\liminf \frac{k}{\log(N/n^2)} >  \frac1{2 \big(I_{\frac{\lambda_0}{\lambda_1e}}- I_{\sqrt{\frac{\lambda_0}{e}}}\big)} \ .
\]
\end{proof}

\subsection{The number of triangles} \label{sec:triangle}
We recall that this test is based on the number $T$ of triangles in $\cG$.  This is an emblematic test among those based on counting patterns, as it is the simplest and the least costly to compute.  As such, the number of triangles in a graph is an important topological characteristic, with applications in the study of real-life networks.  For example, \cite{Maslov2004529} use the number of triangles to quantify the amount of clustering in the Internet. 

\begin{prp} \label{prp:triangle}
The triangle test is asymptotically powerful if 
\beq \label{triangle1}
\lim\sup \lambda_0 <\infty \quad \text{ and } \quad \lambda_1 \to \infty  \ ;
\eeq
or
\beq \label{triangle2}
\lim\inf \lambda_0>0 \quad ,\quad  \lambda_0<N/n   \quad \text{ and } \quad \frac{\lambda_1^2}{\lambda_0} \gg 1\vee \left(\frac{\lambda_0}{\sqrt{N}}\right)^{2/3} \ .
\eeq
When $\lambda_0$ and $\lambda_1$ are fixed, $T$ converges in distribution towards a Poisson distribution with parameter $\lambda_0^3/6$ under the null and $(\lambda_0^3+\lambda_1^3)/6$ under the alternative hypothesis.
In particular, the test is {\em not} asymptotically powerless if 
\beq \label{triangle3}
\limsup \lambda_0 < \infty \quad \text{ and } \quad \liminf \lambda_1 > 0 \ .
\eeq
\end{prp}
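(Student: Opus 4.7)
The plan is to apply the second-moment method for parts \eqref{triangle1} and \eqref{triangle2}, and the method of factorial moments for the Poisson limits in the Poissonian regime; claim \eqref{triangle3} is then an immediate corollary of these limits.

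The first step is to compute the two moments of $T=\sum_C X_C$, where $C$ ranges over unordered triples of distinct vertices and $X_C$ is the indicator that $\cG_C$ is a triangle. Stratifying a triple by its number of vertices in $S$ yields
\beqn
\E_0[T] &=& \binom{N}{3} p_0^3 \sim \frac{\lambda_0^3}{6}\ ,\\
\E_S[T] - \E_0[T] &=& \binom{n}{2}(N-n)\,p_0^2 (p_1 - p_0) + \binom{n}{3}(p_1^3 - p_0^3) \sim \frac{n \lambda_0^2 \lambda_1}{2N} + \frac{\lambda_1^3}{6}\ ,
\eeqn
both dominant terms being nonnegative. For the variance, $\Cov_0(X_C,X_{C'})$ vanishes whenever $|C\cap C'|\leq 1$; the $O(N^3)$ diagonal terms contribute $O(N^3 p_0^3)$ and the $O(N^4)$ pairs sharing exactly two vertices contribute $O(N^4 p_0^5)$, so $\Var_0[T]\asymp \lambda_0^3+\lambda_0^5/N$. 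The same bookkeeping under $\P_S$, now stratifying the two shared vertices by their membership in $S$, gives $\Var_S[T]$ of the same order plus corrections like $n^4 p_1^5\asymp \lambda_1^5/n$, which are negligible under the standing assumption $n^2 p_1\to\infty$.

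For part \eqref{triangle1}, the signal is at least $\lambda_1^3/6\to\infty$ while both variances are $O(1)$, so Chebyshev (applied on both sides) makes the triangle test powerful. For part \eqref{triangle2}, the algebraic equivalence $\lambda_1^6\gg \lambda_0^3+\lambda_0^5/N \iff \lambda_1^2/\lambda_0\gg 1\vee (\lambda_0/\sqrt{N})^{2/3}$ shows that the lower bound $\lambda_1^3/6$ on the signal dominates $\sqrt{\Var_0[T]\vee \Var_S[T]}$, and Chebyshev concludes again.

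For the Poisson limit when $\lambda_0,\lambda_1$ are fixed, I would apply the method of factorial moments: under $\P_0$ this is the classical Erd\H{o}s--R\'enyi result $T\Rightarrow \mathrm{Poi}(\lambda_0^3/6)$. Under $\P_S$, decompose $T=T_{\leq 1}+T_2+T_3$, where $T_k$ counts triangles with exactly $k$ vertices in $S$ (with $T_{\leq 1}=T_0+T_1$). Then $\E_S[T_2]\sim n\lambda_0^2\lambda_1/(2N)\to 0$ forces $T_2\to 0$ in probability; the piece $T_{\leq 1}$ depends only on edges with at least one endpoint in $S^\comp$, and the same factorial-moment computation applied to $\bbG(N-n,p_0)\approx\bbG(N,p_0)$ gives $T_{\leq 1}\Rightarrow\mathrm{Poi}(\lambda_0^3/6)$; the piece $T_3$ is the triangle count of $\cG_S\sim \bbG(n,\lambda_1/n)$, hence $T_3\Rightarrow\mathrm{Poi}(\lambda_1^3/6)$. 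Because $T_{\leq 1}$ and $T_3$ depend on disjoint edge sets they are independent, giving $T\Rightarrow\mathrm{Poi}((\lambda_0^3+\lambda_1^3)/6)$. Claim \eqref{triangle3} is then immediate: the two limiting Poisson laws differ whenever $\lambda_1>0$, so their total variation distance is a positive constant, and the triangle test with an appropriate fixed threshold has risk bounded away from $1$. The main obstacle I anticipate is the variance bookkeeping under $\P_S$: covariances between triangles sharing two vertices inside $S$ involve $p_1^5$ rather than $p_0^5$ and must be shown to remain negligible uniformly in regime \eqref{triangle2}, and the factorial-moment computation, while standard, requires care in handling the three strata of $T$ simultaneously.
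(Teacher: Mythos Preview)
Your overall plan---Chebyshev for \eqref{triangle1} and \eqref{triangle2}, Poisson limits for the fixed regime, then \eqref{triangle3} as a corollary---matches the paper's route. Your decomposition $T=T_{\le 1}+T_2+T_3$ is a harmless variant of the paper's $T=T_{S^\comp}+T_S+T_{S,S^\comp}$; in both cases one shows the mixed pieces have vanishing expectation and reduces to the independent sum $T_{S^\comp}+T_S$.

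There is, however, a genuine error in your treatment of \eqref{triangle1}. You assert that ``both variances are $O(1)$'', but under $\P_S$ with $\lambda_1\to\infty$ this is false: already the diagonal contribution from triangles contained in $S$ gives
\[
\Var_S[T]\ \ge\ \sum_{C\subset S}\Var_S(X_C)\ =\ \binom{n}{3}p_1^3(1-p_1^3)\ \sim\ \frac{\lambda_1^3}{6}\ \to\ \infty\ ,
\]
and the off-diagonal term $n^4p_1^5=\lambda_1^5/n$ may diverge as well. Your Chebyshev argument can be repaired, since the correct bound $\Var_S[T]\prec \lambda_0^3+\lambda_1^3+\lambda_1^5/n+\cdots$ is still $o(\lambda_1^6)$ (using $\lambda_1\to\infty$ and the standing assumption $n\lambda_1=n^2p_1\to\infty$), but this has to be carried out. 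The paper avoids the computation altogether: under $\P_0$ one has $T=O_{\P_0}(1)$ from the Poisson limit, while under $\P_S$ the crude inequality $T\ge T_S$, together with the CLT for the triangle count in $\cG_S\sim\bbG(n,\lambda_1/n)$, gives $T\to\infty$ in probability.

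For \eqref{triangle2} your sketch is on the right track but too cavalier about $\Var_S[T]$. Saying it is ``of the same order plus corrections like $n^4p_1^5$'' omits several cross terms---pairs of triangles sharing two vertices in $S$ but with third vertices in $S^\comp$ contribute $n^3Np_0^2p_1^3=\lambda_0^2\lambda_1^3/N$, pairs with one shared vertex in $S$ and one in $S^\comp$ contribute $n^2N^2p_0^4p_1$, and the diagonal for triangles in $S$ contributes $\lambda_1^3$. Each of these must be checked against the signal. The paper keeps both pieces of the signal, $\E_S T-\E_0 T\succ Nn^2p_0^2p_1+n^3p_1^3$, which makes several of these comparisons immediate; if you retain only the $\lambda_1^3$ piece as you do, the checks still go through under \eqref{triangle2} but require invoking $\lambda_0<N/n$ and $\lambda_1^2\gg\lambda_0$ explicitly for each term.
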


\begin{proof}[Proof of \prpref{triangle}]
Let $T$ be the number of triangles in $\cG$.  For $S \subset \cV$, let $T_S$ denote the number of triangles in $\cG_S$.  We have $T \ge T_{S^\comp} + T_S$.

The following result is based on \citep[Th.~4.1, 4.10]{MR1864966}.  We use it multiple times below without explicitly saying so.

\begin{lem} \label{lem:triangle}
Let $T_m$ be the number of triangles in $\bbG(m, \lambda/m)$.  Fixing $ \lambda > 0$ while $m \to \infty$, we have $T_m \Rightarrow {\rm Poisson}(\lambda^3/6)$.  If instead $\lambda = \lambda_m \to \infty$ with $\log \lambda = o(\log m)$, then $\frac{T_m - \mu}{\sqrt{\mu}} \Rightarrow \cN(0,1)$ where $\mu := \E T = \binom{m}3 (\lambda/m)^3 \sim \lambda^3/6$.
\end{lem}

Assume that \eqref{triangle1} holds.  Applying \lemref{triangle}, $T = O_P(1)$ under $\P_0$, while $T \ge T_S \to \infty$ under $\P_S$.  (For the latter, we use the fact that $T$ is stochastically increasing in $\lambda_1$.) 

\medskip 
Assume that $\lambda_0$ and $\lambda_1$ are fixed. Applying \lemref{triangle}, $T \Rightarrow {\rm Poisson}(\lambda_0^3/6)$  under $\P_0$, while under $\P_S$, $T_{S^\comp}+T_S \Rightarrow {\rm Poisson}((\lambda_0^3+\lambda_1)^3/6)$ since $T_{S^\comp} \sim \bbG(N-n, p_0)$ and $T_S \sim \bbG(n, p_1)$ are independent, and $n=o(N)$. Define $T_{S,S^\comp}:= T - T_{S}-T_{S^\comp}$ as the number of triangles in $\cG$ with nodes both in $S$ and $S^\comp$.  We have
\[E_{S}\left[T_{S,S^{\comp}}\right]\leq N^2np_0^3+ n^2 N p_1p^2_0\leq \frac{n}{N}\lambda_0^3+ \frac{n}{N}\lambda_1\lambda_0^2=o(1)\ ,\]
so that $T_{S,S^{\comp}}=o_{\P_S}(1)$, and by Slutsky's theorem, $T \Rightarrow {\rm Poisson}((\lambda_0^3+\lambda_1)^3/6)$ under $\mathbb{P}_S$.

\medskip
Assume that \eqref{triangle3} holds.  By considering a subsequence if needed, we may assume that $\lambda_0 < \infty$ is fixed.  And since $T$ is stochastically increasing in $\lambda_1$ under the alternative, we may assume that $\lambda_1 > 0$ is fixed.  We have proved above that $T \Rightarrow {\rm Poisson}(\lambda_0^3/6)$ under $\P_0$, $T  \Rightarrow {\rm Poisson}(\lambda_0^3/6 + \lambda_1^3/6)$ the alternative; hence the test $\{T \ge 1\}$ has risk
\[\P_0(T \ge 1) + \P_S(T_S = 0) \to 1 - e^{-\lambda_0^3/6} + e^{-\lambda_0^3/6-\lambda_1^3/6} < 1 \ .\]

\medskip
Finally, assume that \eqref{triangle2} holds.  Using Chebyshev's inequality, to prove that the test based on $T$ is powerful it suffices to show that
\beq \label{cheb-power}
\frac{\E_S T - \E_0 T}{\sqrt{\Var_S(T) \vee \Var_0(T)}} \to \infty \ .
\eeq
Straightforward calculations show that $\E_0 T = \binom{N}3 p_0^3$, and
\beqn
\Var_0(T) 
&=& \big[3 (N-3) (1-p_0) p_0^2 + (1-p_0^3) \big] {N \choose 3} p_0^3 \\
&\asymp& N^4 p_0^5 + N^3 p_0^3 \ .
\eeqn
And carefully counting the number of triplets with 2 or 3 vertices in $S$ gives
\[
\E_S T = {N-n \choose 3}p_0^3 + {n \choose 1}{N-n \choose 2} p_0^3  + {n \choose 2}{N-n \choose 1} p_0^2 p_1 + {n \choose 3} p_1^3 \ ,
\]
while counting pairs of triplets with a certain number of vertices in $S$, shared or not, we arrive at the rough estimate
\[
\Var_S(T) \asymp N^4 p_0^5 + n^2 N^2 p_0^4 p_1 + n^3 N p_0^2 p_1^3 + n^4 p_1^5 + \E_S T \ .
\]
Note that 
\begin{eqnarray}
\E_S T - \E_0 T 
&=& {n \choose 2}{N-n \choose 1} p_0^2 (p_1 - p_0) + {n \choose 3} (p_1^3 - p_0^3) \nonumber\\
&\asymp& n^2 (p_1 - p_0) \big[N p_0^2 + n p_1^2\big] \nonumber\\ 
&\succ& Nn^2p^2_0p_1 + n^3p_1^3 \ ,\label{eq:lower_expected_T}
\end{eqnarray}
since by condition \eqref{triangle2}, $np_0\leq 1$ and $np_1=\lambda_1\gg 1$. 
and
\[
\Var_0(T) \prec \Var_S(T) \asymp N^4 p_0^5 + n^2 N^2 p_0^4 p_1 + n^3 N p_0^2 p_1^3 + n^4 p_1^5 + N^3 p_0^3 + n^3 p_1^3 \ . 
\]
We only need to prove that the square root of this last expression is much smaller than \eqref{eq:lower_expected_T}. 
Since $(np_1)^2\gg Np_0$ and $np_1\to \infty$, we first derive that 
\[n^3 N p_0^2 p_1^3 + n^4 p_1^5 + N^3 p_0^3 + n^3 p_1^3 =o\left[ (np_1)^6\right]\ .\]
Similarly, we get  $n^2 N^2 p_0^4 p_1=o\left[ n^4p_1^2N^2p_0^4\right]$. Finally, \eqref{triangle2} entails that $\lambda_1^2\gg \lambda_0(\lambda_0/\sqrt{N})^{2/3}$ which is equivalent to  $N^4 p_0^5=o\left[ (np_1)^6\right]$. 
\end{proof}

\section{Information theoretic lower bounds} \label{sec:lower}

In this section we state and prove lower bounds on the risk of {\rm any} test whatsoever.  In most cases, we find sufficient conditions under which the null and alternative hypotheses merge asymptotically, meaning that all tests are asymptotically powerless.  In other cases, we find sufficient conditions under which no test is asymptotically powerful.

To derive lower bounds, it is standard to reduce a composite hypothesis to a simple hypothesis.  This is done by putting a prior on the set of distributions that define the hypothesis.  In our setting, we assume that $p_0$ is known so that the null hypothesis is simple, corresponding to the Erd\"os-R\'enyi model $\bbG(N,p_0)$.  The alternative $H_1 := \bigcup_{|S|=n} H_S$ is composite and `parametrized' by subsets of nodes of size $n$.  We choose as prior the uniform distribution over these subsets, leading to the simple hypothesis $\bar{H}_1$ comprising of $\bbG(N,p_0;n,p_1)$ defined earlier.  
The corresponding risk for $H_0$ versus $\bar H_1$ is  
\[
\bar{\gamma}_N(\phi) =  \P_0(\phi = 1) + \frac1{\binom{N}n} \sum_{|S| = n} \P_S(\phi = 0) \ .
\]
Note that $\gamma_N(\phi) \ge \bar \gamma_N(\phi)$ for any test $\phi$.  
Our choice of prior was guided by invariance considerations: the problem is invariant with respect to a relabeling of the nodes.  In our setting, this implies that $\gamma_N^* = \bar \gamma_N^*$, or equivalently, that there exists a test invariant with respect to permutation of the nodes that minimizes the worst-case risk \citep[Lem.~8.4.1]{TSH}.
Once we have a simple versus simple hypothesis testing problem, we can express the risk in closed form using the corresponding likelihood ratio.  Let $\bar{\P}_1$ denote the distribution of $\bW$ under $\bar H_1$, meaning $\bbG(N,p_0;n,p_1)$.  
The likelihood ratio for testing $\P_0$ versus $\bar{\P}_1$ is
\beq \label{L}
L = \frac1{{N \choose n}} \sum_{|S| = n} L_S \ , 
\eeq
where $L_S$ is the likelihood for testing $\P_0$ versus $\P_S$.
Then the test $\phi^* = \{L > 1\}$ is the unique test that minimizes $\bar \gamma_N$, and
\[
\bar \gamma_N(\phi^*) = \bar \gamma_N^* = 1 - \frac12 \E_0 |L - 1| \ .
\]
For each subset $S \subset \cV$ of size $n$, let $\Gamma_S$ be a decreasing event, i.e., a decreasing subset of adjacency matrices, and define the truncated likelihood as
\beq \label{Lt}
\tilde L = \frac1{\binom{N}{n}} \sum_{|S| = n} L_S \, \1_{\Gamma_S} \ .
\eeq
We have
\beqn
\E_0|L-1| 
&\le& \E_0|\Lt-1|+\E_0 (L-\Lt) \\
&\le& \sqrt{\E_0[\Lt^2] - 1 + 2 (1 - \E_0[\Lt])} + (1-\E_0[\Lt]) \ ,
\eeqn
using the Cauchy-Schwarz inequality and the fact that $\E_0 L = 1$ since it is a likelihood.  Hence, for all tests to be asymptotically powerless, it suffices that $\lim\sup \E_0[\Lt^2] \le 1 $ and $\lim\inf\E_0[\Lt] \ge 1$.  Note that
\[
\E_0[\Lt] = \frac1{\binom{N}n} \sum_{|S|=n} \P_S(\Gamma_S) \ .
\]
In all our examples, $\P_S(\Gamma_S)$ is only a function of $|S|$, and since all the sets we consider have same size $n$, $\E_0[\Lt] \to  1 $ is equivalent to $\P_S(\Gamma_S) \to 1$.

\subsection{All tests are asymptotically powerless} \label{sec:powerless}

We start with some sufficient conditions under which all tests are asymptotically powerless.  Recall $\alpha$ in \eqref{a} and $\zeta$ in \eqref{zeta}.  We require that $\zeta \to 0$ below to prevent the total degree test from having any power (see \prpref{total}).

\begin{thm} \label{thm:lower}
Assume that $\zeta \to 0$.  Then all tests are asymptotically powerless in any of the following situations:
\beq \label{lower1}
\lambda_0 \to 0, \quad \lambda_1 \to 0, \quad \limsup \frac{I_{\lambda_0}}{I_{\lambda_1}} \frac{\log n}{\log N}<  1 \ ;
\eeq
\beq \label{lower2}
0 < \liminf \lambda_0 \le \limsup \lambda_0 < \infty, \quad \lambda_1 \to 0 \ ;
\eeq
\beq \label{lower3}
\lambda_0 \to \infty \text{ with } \alpha \to 0, \quad \limsup \lambda_1 < 1 \ ;
\eeq
\beq \label{lower4}
0 < \liminf \alpha \le \limsup \alpha < 1, \quad \limsup \ (1-\alpha)\sup_{k=n/u_N}^n \frac{\E_S[W_{k,S}^*]}{k} < 1 \ .
\eeq
\end{thm}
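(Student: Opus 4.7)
I would apply the truncated likelihood scheme outlined just above the theorem statement: for each $S$ I pick a decreasing event $\Gamma_S$ that depends only on $\cG_S$, satisfies $\P_S(\Gamma_S)\to 1$, and is strong enough to make $\E_0[\tilde L^2]\to 1$. Since $\E_0[\tilde L]=\P_S(\Gamma_S)$ by exchangeability, the convergence $\E_0[\tilde L]\to 1$ is handed to me by the choice of $\Gamma_S$. The workhorse is then the second moment, which after grouping the sum over $(S,S')$ according to $k = |S \cap S'|$ becomes
\[
\E_0[\tilde L^2] = \sum_{k=0}^{n} \frac{\binom{n}{k}\binom{N-n}{n-k}}{\binom{N}{n}} \ \E_0\!\left[L_S L_{S'} \, \1_{\Gamma_S\cap \Gamma_{S'}}\right],
\]
where $S, S'$ is any pair with $|S\cap S'|=k$. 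The combinatorial prefactor is $(1+o(1))$ times a Poisson-type weight $\tfrac{1}{k!}(n^2/N)^k$, so one needs to bound the inner expectation by essentially $(k!/ (n^2/N)^k)\cdot (1+o(1))$ uniformly in $k$.

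\textbf{Reduction to subgraph statistics on $\cG_{S\cap S'}$.} A direct calculation shows that the unrestricted term factors as $\E_0[L_SL_{S'}]=(1+\rho)^{\binom{k}{2}}$ with $\rho=(p_1-p_0)^2/(p_0(1-p_0))$, which is where the trouble lies when $k$ is moderately large. A change of measure argument expresses $\E_0[L_S L_{S'}\1_{\Gamma_S\cap\Gamma_{S'}}]$ as (up to a harmless multiplicative factor depending only on edges outside $S\cap S'$) an expectation under a graph on $S\cap S'$ whose connection probability $p_1^2/p_0$ is comparable to $p_1$ when $\alpha$ is bounded away from $1$. The condition $\zeta\to 0$ ensures that the global `background' contribution, coming from $k$ small and from edges in the symmetric difference, stays under control, so the whole problem reduces to capping the edge count (or related subgraph statistic) inside the overlap under the tilted law.

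\textbf{Choice of the truncation event in each regime.} In regimes \eqref{lower1}, \eqref{lower2} and \eqref{lower3}, the graph $\cG_S$ is subcritical or very close to critical, and I would take $\Gamma_S=\{|\Cmax^S|\le k_0\}$ where $k_0$ is the upper-bound cluster size from Lemma \ref{lem:subcritic_cluster_upper} applied with the appropriate $\lambda_1$; the probability $\P_S(\Gamma_S)\to 1$ is then exactly that lemma. On $\Gamma_S\cap\Gamma_{S'}$ no connected component of $\cG_{S\cap S'}$ can have size exceeding $k_0$, and one can bound $L_S L_{S'}\1_{\Gamma_S\cap\Gamma_{S'}}$ by summing over possible edge-configurations inside the overlap that respect this cluster bound. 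The boundary condition in \eqref{lower1} is exactly the value of $k_0$ that makes the resulting geometric series telescope correctly; the same device works when $\lambda_0$ or $\lambda_1$ stays bounded. For \eqref{lower4}, the scan threshold enters directly, so I would instead define
\[
\Gamma_S = \bigcap_{k=n/u_N}^{n}\!\Big\{ W^*_{k,S} \le (1-\delta)^{-1} \E_S[W^*_{k,S}] \Big\}
\]
for a vanishing $\delta$; Lemma \ref{lem:concentration_Wk} together with a union bound over the $n(1-u_N^{-1})$ values of $k$ yields $\P_S(\Gamma_S)\to 1$. On $\Gamma_S\cap\Gamma_{S'}$ the number of edges inside $\cG_{S\cap S'}$ is at most $(k/(1-\alpha))(1+o(1))$, by the scan bound applied with the overlap size.

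\textbf{Main obstacle.} The critical step is case \eqref{lower4}, where the entire point of the theorem is that the scan-based detection threshold is sharp; any slack in the truncation bound on the number of edges inside $\cG_{S\cap S'}$ will leave a gap from the upper bound of Theorem \ref{thm:broad}. The computation of $\E_0[L_SL_{S'}\1_{\Gamma_S\cap\Gamma_{S'}}]$ has to use the scan bound \emph{exactly}, splitting into the range $k\le n/u_N$ (where the bare binomial estimate of $(1+\rho)^{\binom{k}{2}}$ already suffices thanks to $\zeta\to 0$) and the range $k\ge n/u_N$ (where the scan constraint kicks in). Matching the binomial prefactor $(en^2/(kN))^k$ against the truncated likelihood bound so that the sum telescopes to $1+o(1)$ under precisely the condition $\limsup (1-\alpha)\sup_k \E_S[W^*_{k,S}]/k<1$ is the technical heart of the argument, and is presumably where the `delicate choice of $\Gamma_S$' mentioned earlier in the paper is needed.
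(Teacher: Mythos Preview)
Your overall framework is right, and you correctly identify that case \eqref{lower4} is where the constants must match. However, there are two genuine gaps, one in each half of your proposal.

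\textbf{Gap in cases \eqref{lower1}--\eqref{lower3}.} A bound on $|\Cmax^S|$ alone is too weak. If all you know is that every component of $\cG_S$ has at most $k_0$ vertices, then a subset $T\subset S$ of size $k$ could still carry on the order of $k\cdot k_0$ edges, not $O(k)$. The paper instead truncates on the \emph{forest} event (together with the component-size bound in case \eqref{lower1}), which forces $W_T\le |T|$ for every $T$ and, more importantly, lets one enumerate the admissible configurations of $\cG_{S_1\cap S_2}$ by counting labelled forests (\lemref{combinatorics_forest}). In case \eqref{lower3} the paper uses the slightly weaker event ``no component has more than one cycle,'' which still yields $W_T\le |T|$. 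Without this combinatorial handle the second-moment sum over configurations inside the overlap does not collapse to the geometric series you need.

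\textbf{Gap in case \eqref{lower4}.} Your claim that the bare estimate $(1+\rho)^{\binom{k}{2}}$ suffices for overlap sizes $k\le n/u_N$ is false. The condition $\zeta\to 0$ only controls the untruncated term for $k$ up to $k_0\asymp b\,n^2/N$ with $b\to\infty$ slowly; for $k$ in the wide intermediate range between $k_0$ and $n/u_N$ the untruncated contribution explodes. The paper therefore defines $\Gamma_S$ via the cap $W^*_{k,S}\le w_k:=\lfloor k(1-c)^{1/2}/(1-\alpha)\rfloor$ for \emph{all} $k>k_{\min}$, where $k_{\min}$ is a bounded constant. Showing $\P_S(\Gamma_S)\to 1$ then requires two separate arguments: for $k\ge n/u_N$ the concentration of \lemref{concentration_Wk} that you cite, but for $k_{\min}<k<n/u_N$ a union bound plus Chernoff (\lemref{control_gamma2}). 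Your truncation, which only covers $k\ge n/u_N$, leaves the intermediate range uncontrolled in the second-moment computation.
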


We recall here the first few steps that we took in \citep{subgragh_detection} to derive analogous lower bounds in the denser regime where $\liminf \alpha \ge 1$. 
We start with some general identities.  We have
\beq \label{Ldef}
L_S := \exp(\theta W_S - \Lambda(\theta) \nn)\ ,
\eeq
with
\beq \label{theta-def}
\theta := \theta_{p_1}, \quad \theta_q := \log\left(\frac{q (1-p_0)}{p_0 (1-q)}\right) \ ,
\eeq
and
\[
\Lambda(\theta) := \log(1 -p_0 +p_0 e^\theta)\ ,
\]
which is the cumulant generating function of ${\rm Bern}(p_0)$. 

In all cases, the events $\Gamma_S$ satisfy 
\beq \label{Gamma}
\Gamma_S \subset \bigcap_{k > \kmin} \{W_T \leq w_k,\ \forall T\subset S\text{ such that }|T|=k \} \ ,
\eeq
where $\kmin$ and $w_k$ vary according to the specific setting. 

To prove that $\E_0 \Lt^2 \le 1 + o(1)$, we proceed as follows.  
We have
\beqn
\E_0 \big[\Lt^2 \big] 
&=& \frac1{\binom{N}{n}^2} \sum_{|S_1|=n} \sum_{|S_2|=n} \E_0 \left(L_{S_1} L_{S_2} \1_{\Gamma_{S_1}} \1_{\Gamma_{S_2}} \right) \\
&=& \frac1{\binom{N}{n}^2} \sum_{|S_1|=n} \sum_{|S_2|=n}  \E_0 \left[\exp\left(\theta (W_{S_1} + W_{S_2}) - 2 \Lambda(\theta) \nn\right) \1_{\Gamma_{S_1} \cap \Gamma_{S_2}} \right] \ .
\eeqn
Define 
\[
W_{S \times T} = \sum_{i \in S, j \in T} W_{i,j}\ ,
\]
and note that $W_S = \frac12 W_{S \times S}$.
We use the decomposition
\beq \label{decomp}
W_{S_1} + W_{S_2} = W_{S_1 \times (S_1 \setminus S_2)} + W_{S_2 \times (S_2 \setminus S_1)} + 2 W_{S_1 \cap S_2}\ ,
\eeq
the independence of the random variables on the RHS of \eqref{decomp} and FKG inequality to get
\beqn \label{Lt-bound}
\lefteqn{\E_0 \left(e^{\theta (W_{S_1} + W_{S_2}) - 2 \Lambda(\theta) \nn } \1_{\Gamma_{S_1} \cap \Gamma_{S_2}} \right) } & &\\
&= & \E_0\left(e^{2 \theta W_{S_1 \cap S_2} - 2 \Lambda(\theta) K^{(2)}} \E_0\left[e^{\theta (W_{S_1 \times (S_1 \setminus S_2)}+W_{S_2 \times (S_2 \setminus S_1)})  - 2\frac{\Lambda(\theta)}2 (n-K)(n+K-1)}\1_{\Gamma_{S_1} \cap \Gamma_{S_2}}\big|\cG_{S_1\cap S_2}\right]\right)\\
 &\leq & {\rm I} \cdot {\rm II} \cdot {\rm III}\ ,
\eeqn
where $K=|S_1\cap S_2|$, 
\[
{\rm I} := \E_0 \left[ \exp\left(\theta W_{S_1 \times (S_1 \setminus S_2)} - \frac{\Lambda(\theta)}2 (n-K)(n+K-1)\right) \right] = 1\ , 
\]
\[
{\rm II} := \E_0 \left[ \exp\left(\theta W_{S_2 \times (S_2 \setminus S_1)} - \frac{\Lambda(\theta)}2 (n-K)(n+K-1)\right) \right] = 1\ , 
\]
\[
{\rm III} := \E_0 \left[ \exp\left(2 \theta W_{S_1 \cap S_2} - 2 \Lambda(\theta) K^{(2)} \right) \1_{\Gamma_{S_1} \cap \Gamma_{S_2}} \right] \ . 
\]
In
The first two equalities are due to the fact that the likelihood integrates to one. 

Assuming that $\zeta \to 0$, we prove that all tests are asymptotically powerless in the following settings:
\beq \label{lower-a}
\limsup \lambda_0 < \infty, \quad \lambda_1^2 = o(\lambda_0) \ ;
\eeq
\beq \label{lower-b}
\lambda_0 \to 0, \quad \lambda_1 \to 0, \quad \lim\sup \frac{I_{\lambda_0}\log(n)}{I_{\lambda_1}\log(N)}<1, \quad n^2=o(N) \ ;
\eeq
\beq \label{lower-c}
\limsup \lambda_1 < 1,\quad \lambda_0\to\infty\ ,  \quad \limsup \alpha <1 \ ;
\eeq
\begin{multline}
\label{lower-d}
\lim\inf \lambda_1\geq 1, \quad 0 < \liminf \alpha \le \limsup \alpha < 1, \\ 
\limsup \ (1-\alpha)\sup_{k=n/u_N}^n \frac{\E_S[W_{k,S}^*]}{k} < 1 \ .
\end{multline}
This implies \thmref{lower}.  Indeed, \eqref{lower-a} includes \eqref{lower2}.  Assume that \eqref{lower1} holds.  Consider any subsequence $n^2/N$ converging to $x \in \mathbb{R}^+\cup\{\infty\}$.  If $x=0$, then \eqref{lower-b} holds.  If $x \ne 0$, then $\zeta\to 0$ implies that $(\lambda_1-\lambda_0 n/N)^2/\lambda_0=o(1)$. If, in addition, $\lambda_1\geq 2\lambda_0 n/N$, this implies  that $\lambda_1^2/\lambda_0=o(1)$. If, otherwise, $\lambda_1\leq 2\lambda_0n/N$, then $\lambda_1^2/\lambda_0\leq 4 \lambda_0(n/N)^2=o(1)$ since $\lambda_0=o(1)$. Thus, in both cases, \eqref{lower-a} holds.  Finally, \eqref{lower-c} includes \eqref{lower3} and also \eqref{lower4} when $\limsup \lambda_1 < 1$, while \eqref{lower-d} includes \eqref{lower4} when $\liminf \lambda_1 \ge 1$.  We note that \eqref{lower-d} implies that $\limsup \lambda_1 < \infty$ because of \eqref{broad-lb}.

\subsubsection{Proof of \thmref{lower} under (\ref{lower-a})}

The arguments here are very similar to those used in \citep{subgragh_detection}, except for the choice of events $\Gamma_S$.
Define
\[\Gamma_S:= \{\text{$\cG_S$ is a forest}\}\ .\]
When $\Gamma_S$ holds, for any $T \subset S$, $\cG_T$ is also a forest, and since any forest $\cF$ with $k$ nodes and $t$ connected components (therefore all trees) has exactly $k - t \le k$ edges, we have $W_T \le |T|$.  Hence, \eqref{Gamma} holds with $w_k := k$.  

\begin{lem}\label{lem:gamma_poisson}
$\P_S(\Gamma_S)$ is independent of $S$ of size $n$, and $\P_S(\Gamma_S)\to 1$.  
\end{lem}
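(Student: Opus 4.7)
The first claim is essentially immediate. Under $\P_S$, the induced subgraph $\cG_S$ is distributed exactly as $\bbG(n,p_1)$, because the only entries of $\bW$ whose law differs from the null are those indexed by pairs inside $S$, and those are i.i.d.\ Bernoulli$(p_1)$. Since the event $\Gamma_S = \{\cG_S \text{ is a forest}\}$ is a function of $\cG_S$ only, its $\P_S$-probability is $\P\bigl(\bbG(n,p_1) \text{ is a forest}\bigr)$, which depends only on $n$ and $p_1$. This proves the invariance with respect to $S$.

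For the second claim, I would first extract from the standing hypothesis \eqref{lower-a} that $\lambda_1 \to 0$: the assumption $\limsup \lambda_0 < \infty$ combined with $\lambda_1^2 = o(\lambda_0)$ gives $\lambda_1^2 = \lambda_0 \cdot (\lambda_1^2/\lambda_0) \to 0$. Then I would use the standard first-moment method on the number of cycles. Let $C_k$ denote the number of cycles of length $k$ in $\bbG(n,p_1)$. Then
\[
\E_S[C_k] = \binom{n}{k} \frac{(k-1)!}{2} p_1^k \le \frac{(np_1)^k}{2k} = \frac{\lambda_1^k}{2k} ,
\]
since $\binom{n}{k}(k-1)!/2 \le n^k/(2k)$.

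Summing over $3 \le k \le n$, and using $\lambda_1 \to 0$ so that the geometric series is dominated by its first term, I get
\[
\E_S\Big[\sum_{k=3}^{n} C_k\Big] \le \sum_{k=3}^{\infty} \frac{\lambda_1^k}{2k} \le \frac{\lambda_1^3}{6(1-\lambda_1)} \to 0 .
\]
By Markov's inequality, $\cG_S$ has no cycle with probability tending to one, i.e.\ $\P_S(\Gamma_S) \to 1$. The only subtle point, which is however not really an obstacle, is verifying that the regime forces $\lambda_1 \to 0$; once that is in hand, the cycle-counting argument is routine and sharp enough since even the $k=3$ term (expected number of triangles) vanishes.
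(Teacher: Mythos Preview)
Your proof is correct and follows essentially the same approach as the paper: both argue that $\cG_S\sim\bbG(n,p_1)$ under $\P_S$ (giving invariance in $S$), then bound the expected number of cycles of each length $k$ by $\lambda_1^k/(2k)$, sum over $k$, and apply Markov's inequality. Your write-up is slightly more explicit in deducing $\lambda_1\to 0$ from \eqref{lower-a} and in justifying the invariance claim, but the argument is the same.
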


\begin{proof}
The expected number of cycles of size $k$ in $\cG_S$ under $\P_S$ is equal to
\begin{equation}
\frac{n!}{(n-k)!2k} \, p_1^k \leq \frac{\lambda_1^k}{2k} \ .\label{eq:numberloops}
\end{equation}
Summing \eqref{eq:numberloops} over $k$, we see that the expected number of cycles in $\mathcal{G}_S$ under $\P_S$ is smaller than $\lambda_1^3/(1-\lambda_1) = o(1)$. Hence, with probability going to one under $\P_S$, $\cG_S$ has no cycles and is therefore a forest. 
\end{proof}

In order to conclude, we only need to prove that  $\lim\sup\mathbb{E}_0[\tilde{L}^2]\leq 1$.  We start from \eqref{Lt-bound} and we recall that $K=|S_1\cap S_2|$.
We take $\kmin$ as the largest integer $k$ satisfying. 
\[\frac{2}{k-3}\geq \frac{p_1^2(1-p_0)}{p_0(1-p_1)^2}\ , \]
with the convention $2/0=\infty$, so that $\kmin \ge 3$. 
Let $q_k=2/(k-1)$. 
Recall that $\rho = n/(N-n)$ and define $k_0 = \lceil b n \rho \rceil$, where $b \rightarrow \infty$ satisfies $b^2 \zeta \to 0$. 

\bitem
\item When $K \leq  \kmin$, we will use the obvious bound:
\beqn
{\rm III} 
& \le& \E_0 \exp\left(2 \theta W_{S_1 \cap S_2} - 2 \Lambda(\theta) K^{(2)} \right) = \exp\left(\Delta K^{(2)}\right), 
\eeqn
where
\beq \label{Delta}
\Delta := \Lambda(2 \theta) - 2 \Lambda(\theta) = \log\left(1 + \frac{(p_1 -p_0)^2}{p_0 (1 -p_0)}\right).
\eeq

\item When $K > \kmin$, we use a different bound.  Noting that $\Gamma_{S_1} \cap \Gamma_{S_2} \subset \{W_{S_1 \cap S_2} \le w_K\}$, for any $\xi \in (0, 2 \theta)$, we have
\beqn
{\rm III} 
&\le& \E_0 \left[\exp\left(\xi W_{S_1 \cap S_2} + (2\theta -\xi) w_K - 2 \Lambda(\theta) K^{(2)} \right) \IND{W_{S_1 \cap S_2} \le w_K}\right] \\
&\le& \E_0 \left[\exp\left(\xi W_{S_1 \cap S_2} + (2\theta -\xi) w_K - 2 \Lambda(\theta) K^{(2)} \right)\right]\ ,
\eeqn
 so that
\[ 
{\rm III} \le \exp\left(\Delta_K K^{(2)}\right),
\] 
where
\beq \label{Deltak}
\Delta_k := \min_{\xi \in [0, 2 \theta]} \Lambda(\xi) + (2\theta -\xi) q_k - 2\Lambda(\theta)\ .
\eeq
\eitem

Using the fact that $\mathbb{E}_0[\tilde{L}^2] \leq \E[{\rm III}]$ where the expectation is taken with respect to $K$, we have
\beqn
\mathbb{E}_0[\tilde{L}^2]
&\leq& \E\left[\IND{K \le k_0} \exp\left(\Delta K^{(2)}\right)\right] \\
&& \quad + \E\left[\IND{k_0+1 \le K \le \kmin} \exp\left(\Delta K^{(2)}\right)\right] \\ 
&& \quad \quad +  \E\left[\IND{\kmin+1 \le K \le n} \exp\left(\Delta_K K^{(2)}\right)\right] \\
&=& A_1 + A_2 + A_3 \ ,
\eeqn
where the expectation is with respect to $K \sim {\rm Hyp}(N, n, n)$.
By \lemref{hyper}, $K$ is stochastically bounded by $\Bin(n, \rho)$.  Hence, using Chernoff's bound (see \lemref{chernoff}), we have
\beq \label{Kbound}
\P(K \ge k) 
\le \P({\rm Hyp}(N, n, n) \ge k) 
\le \P(\Bin(n, \rho) \ge k) 
\le \exp\left(- n H_{\rho}(k/n) \right) .
\eeq

\bitem
\item When $K \le k_0$, we proceed as follows.  If $k_0 = 1$, we simply have
\[
A_1 = \P(K \le 1) \le 1\ .
\]
If $k_0 \ge 2$, we use the expression \eqref{Delta} of $\Delta$ to derive
\[
A_1 \leq \exp\left[\Delta k_0^2\right] \leq \exp\left[O(1)\frac{(p_1-p_0)^2}{p_0(1-p_0)}\frac{b^2 n^4}{N^2}\right] = \exp\left[O(b^2 \zeta) \right]  = 1 + o(1) \ .
\]

\item When $k_0 + 1 \le K \leq \kmin$, we use \eqref{Kbound} and \lemref{H}, to get
\begin{eqnarray*}
A_2 
\notag &\le &\sum_{k = k_0 + 1}^{\kmin} \exp\left[\Delta \frac{k(k-1)}2 - nH_{\rho}\left(\frac{k}{n}\right)\right]\\
\label{A2} &\leq & \sum_{k = k_0 + 1}^{\kmin} \exp\left[k\left(\Delta\frac{k-1}{2}-\log\left(\frac{k}{n\rho}\right)+1\right)\right] \ .
\end{eqnarray*}
The last sum is equal to zero if $\kmin\le k_0$; therefore, assume that $\kmin>k_0$.  
For $a > 0$ fixed, the function $f(x) = a x - \log x$ is decreasing on $(0, 1/a)$ and increasing on $(1/a, \infty)$.  Therefore, for $k_0 + 1 \le k \le \kmin$,
\[
\Delta \frac{k-1}2 - \log\left(\frac{k}{n\rho}\right) \le \max_{\ell\in\{k_0,k_{\min}\}} \left\{\Delta\frac{\ell-1}{2}-\log\left(\frac{\ell N}{n^2}\right)\right\}\ .
\]
We know that $\Delta (k_0 -1) = o(1)$, so that
\[
\Delta \frac{k_0-1}2 - \log\left(\frac{k_0}{n\rho}\right) \le o(1) - \log b \to -\infty \ .
\]
Therefore, it suffices to show that 
\[\frac{\kmin-1}{2}\Delta-\log\left(\frac{\kmin}{n\rho}\right)\rightarrow -\infty \ .\]
 If $\kmin>3$, observe that 
\[\frac{\kmin-1}{2}\Delta\leq \left(1+\frac{\kmin-3}{2}\right)\log\left(1+\frac{2}{\kmin-3}(1+o(1))\right) \leq \frac32 \log 3 +o(1)\ ,\]
while $\log(\kmin/(n\rho))\geq \log(k_{0}/(n\rho))\rightarrow \infty$.
If we have $\kmin=3$, then we have
\beqn
\Delta-\log\left(\frac{3}{n\rho}\right)\leq \log(p_1^2/p_0)-\log(N/n^2)+O(1)\leq \log\left(\frac{\lambda_1^2}{\lambda_0}\right)+O(1)\rightarrow -\infty \ ,
\eeqn
because of \eqref{lower-a}.

\item When $\kmin < K \le n$, we need to bound $\Delta_K$.  
Remember the definition of the entropy function $H_q$ in \eqref{H}, and that $H(q)$ is short for $H_{p_0}(q)$.  It is well-known that $H$ is the Fenchel-Legendre transform of $\Lambda$; more specifically, for $q \in (p_0,1)$, 
\beq \label{HL}
H(q) = \sup_{\theta \ge 0} [q \theta - \Lambda(\theta)] = q \theta_q - \Lambda(\theta_q) \ .
\eeq 
Hence, the minimum of $\Lambda(\xi) + (2\theta -\xi) q_k - 2\Lambda(\theta)$ over $\xi> 0$ is achieved at $\xi=\theta_{q_k}$ as soon as $2\theta\geq \theta_{q_k}$.  Moreover, by definition of $\theta$ in \eqref{theta-def}, our choice of $q_k$, and the fact that $k \ge \kmin$, we have
\[
2 \theta - \theta_{q_k} = \log \left( \frac{p_1^2 (1-p_0)}{p_0 (1-p_1)^2} \frac2{k-3} \right) \ge 0 \ .
\]
Hence, we have
\begin{eqnarray}
\Delta_k&=& - H(q_k)+2\theta q_k-2\Lambda(\theta)\nonumber \\
& = & -2H_{p_1}(q_k)+ H(q_k)\ . \label{eq:delta_k}
\end{eqnarray}
Using the definition of the entropy and the fact that $p_0 = o(1)$, we therefore have 
\beqn 
\Delta_k &=& q_k\log\left(\frac{p_1^2}{q_k p_0}\right)+ (1-q_k)\log\left(\frac{(1-p_1)^2}{(1-q_k)(1-p_0)}\right) \\
&\leq & \frac{2}{k-1} \Bigg(\log\left(\frac{\lambda_1^2 N (k-1)}{2 \lambda_0 n^2}\right)+ O(1)\Bigg) \ ,
\eeqn
where the $O(1)$ is uniform in $k$.
Hence, starting from the bound we got when bounding $A_2$, we have
\beqn
A_3 
&\leq & \sum_{k=\kmin+1}^{n} \exp\left[k\left(\Delta_k \frac{k-1}{2}-\log\left(\frac{k}{n\rho}\right)+1\right)\right] \\
&\leq & \sum_{k=\kmin+1}^{n} \exp\left[k\left\{\log\left(\frac{\lambda_1^2}{\lambda_0}\right)+\log\left(\frac{N(k-1)}{2 n^2}\right)-\log\left(\frac{Nk}{n^2}\right) + O(1)\right\}\right]\\ &\leq &
\sum_{k=\kmin+1}^{n} \exp\left[k\left\{\log\left(\frac{\lambda_1^2}{\lambda_0}\right)+O(1)\right\}\right]=o(1)\ , 
\eeqn
since $\lambda_1^2/\lambda_0 = o(1)$.
\eitem

This concludes the proof of \thmref{lower} under \eqref{lower-a}.

\subsubsection{Proof of \thmref{lower} under (\ref{lower-b})} \label{sec:lower-b}

Let $c$ be a positive constant that will be chosen small later on.
Define 
\[f_n:= \left(1+c\right)I_{\lambda_1}^{-1}\log(n)\ .\]
We consider the event 
\[
\Gamma_S = \{\text{$\mathcal{G}_S$ is a forest} \}\cap \{|\cC_{\max, S}| \le f_n\} \ .
\]
When $\Gamma_S$ holds, for any $T \subset S$, $\cG_T$ is also a forest, with $|T|-W_{T}$ connected components.  Since the size of each connected component is at most $f_n$, there are at least $\lceil |T|/f_n \rceil$ connected components.  Hence, \eqref{Gamma} holds with $w_k =k-\lceil \frac{k}{f_n}\rceil$.

\begin{lem}\label{lem:gamma_poisson_subcritic}
$\P_S(\Gamma_S)$ is independent of $S$ of size $n$, and $\P_S(\Gamma_S)\to 1$.  
\end{lem}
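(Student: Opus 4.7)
The event $\Gamma_S$ depends only on the induced subgraph $\mathcal{G}_S$, and under $\P_S$ we have $\mathcal{G}_S \sim \bbG(n,p_1)$. Hence the distribution of $\Gamma_S$ under $\P_S$ is invariant under permutation of the nodes of $S$, so $\P_S(\Gamma_S)$ depends only on $|S|=n$. This takes care of the first claim and reduces the problem to showing that $\P_S(\Gamma_S) \to 1$ via a union bound over the two defining events.

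For the ``forest'' event, I would reuse verbatim the cycle-counting argument of \lemref{gamma_poisson}. Under \eqref{lower-b} we have $\lambda_1 \to 0$ (in particular $\lambda_1 < 1$ eventually), so the expected number of cycles of length $k$ in $\mathcal{G}_S$ is at most $\lambda_1^k/(2k)$ by \eqref{eq:numberloops}, and summing over $k \ge 3$ bounds the expected total number of cycles by $\lambda_1^3/[3(1-\lambda_1)] = o(1)$. Markov's inequality then yields $\P_S(\mathcal{G}_S \text{ is a forest}) \to 1$.

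For the bound on $|\cC_{\max,S}|$, I would invoke \lemref{subcritic_cluster_upper} applied to $\mathcal{G}_S \sim \bbG(n,\lambda_1/n)$, with the sequence $u_n = f_n = (1+c) I_{\lambda_1}^{-1} \log n$. I need to verify its three hypotheses. First, $\lambda_1 < 1$ eventually since $\lambda_1 \to 0$. Second, since $\lambda_1 \to 0$ we have $I_{\lambda_1} = \lambda_1 - 1 - \log \lambda_1 \to \infty$, so $I_{\lambda_1}^{-1} \vee 1 = 1$ eventually and $\log[I_{\lambda_1}^{-1} \vee 1] = 0 = o(\log n)$. Third, the requirement $\liminf u_n I_{\lambda_1}/\log n > 1$ holds with exact value $1+c > 1$. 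Thus \lemref{subcritic_cluster_upper} gives $\P_S(|\cC_{\max,S}| \ge f_n) = o(1)$.

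Combining the two probability bounds via a union bound yields $\P_S(\Gamma_S) \to 1$. The argument is essentially routine once the right external lemmas are in hand; the only mild subtlety is checking the hypotheses of \lemref{subcritic_cluster_upper} in the regime $\lambda_1 \to 0$, which is easier than the borderline case $\lambda_1 \to 1$ that the lemma was designed to handle.
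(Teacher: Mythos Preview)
Your proof is correct and follows exactly the paper's approach: the paper's own proof consists of the single sentence ``This is a straightforward consequence of Lemmas~\ref{lem:subcritic_cluster_upper} and~\ref{lem:gamma_poisson},'' and you have simply spelled out the verification of the hypotheses of \lemref{subcritic_cluster_upper} in the regime $\lambda_1\to 0$.
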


\begin{proof}
This is a straightforward consequence of Lemmas~\ref{lem:subcritic_cluster_upper} and~\ref{lem:gamma_poisson}.
\end{proof}

To conclude, it suffices to show that $\mathbb{E}_0[\tilde{L}^2] \le 1 + o(1)$.  
For this, we will need the following.

\begin{lem}\label{lem:combinatorics_forest}
Let $F_{k,j}$ stand for the number of forests with $j$ trees on $k$ labelled vertices.
For any $k\geq 2$ and any $j\leq k$,  $F_{k,j}\leq k^{k-2}$.
\end{lem}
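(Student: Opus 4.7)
The plan is to deduce the bound by combining Cayley's formula for rooted forests with an elementary lower bound on the product of positive integers with a prescribed sum, and a trivial binomial estimate.

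The first step is to invoke the classical refinement of Cayley's formula: for any choice of $j$ distinct vertices $r_1,\dots,r_j\in[k]$, the number of forests on $[k]$ with exactly $j$ trees in which each $r_i$ lies in a distinct tree equals $j\,k^{k-j-1}$. Summing this identity over the $\binom{k}{j}$ possible choices of root set produces a double count of pairs (unrooted forest $F$ with $j$ components, choice of one marked vertex per component), giving
$$\sum_{F}\prod_{T\in F}|T| \;=\; \binom{k}{j}\, j\, k^{k-j-1},$$
where the sum runs over unrooted forests $F$ on $[k]$ with exactly $j$ trees.

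The second step is an elementary inequality: for any positive integers $a_1,\dots,a_j$ with $\sum_i a_i=k$, one has $\prod_{i=1}^j a_i\ge k-j+1$. The minimum is attained at the most skewed composition $(k-j+1,1,\dots,1)$, which follows from iterating the move $(a,a')\mapsto(a+a'-1,1)$: since $(a-1)(a'-1)\ge 0$ whenever $a,a'\ge 1$, this substitution never increases the product. Applied to the component sizes of any forest $F$, this yields $\prod_{T\in F}|T|\ge k-j+1$.

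Combining the two steps gives $F_{k,j}\,(k-j+1)\le \binom{k}{j}\, j\, k^{k-j-1}$. A short computation rewrites $\binom{k}{j}\,j/(k-j+1)$ as $\binom{k}{j-1}$, so $F_{k,j}\le \binom{k}{j-1}\,k^{k-j-1}$. The conclusion then follows from the trivial bound $\binom{k}{j-1}\le k^{j-1}$, which gives $F_{k,j}\le k^{k-2}$. No step presents any real difficulty; the only point worth checking is that the identity of the first step is sharp enough to absorb the losses from the two subsequent estimates, which the arithmetic confirms.
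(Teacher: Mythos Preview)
Your proof is correct and takes a genuinely different route from the paper. The paper's argument is a monotonicity proof: it shows $F_{k,j}\ge F_{k,j+1}$ for all $j$ by an edge-deletion double count (deleting any of the $k-j$ edges of a $j$-tree forest yields a $(j+1)$-tree forest, and each $(j+1)$-tree forest with component sizes $k_1,\dots,k_{j+1}$ arises in $\sum_{s\ne t}k_sk_t\ge k-1$ ways), then invokes only the classical Cayley formula $F_{k,1}=k^{k-2}$. Your approach instead appeals to the generalized Cayley formula for forests with prescribed vertices in distinct trees, double-counts to obtain the exact identity $\sum_F\prod_{T\in F}|T|=\binom{k}{j}\,j\,k^{k-j-1}$, and then lower-bounds each product by $k-j+1$. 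Your method yields the sharper intermediate bound $F_{k,j}\le \binom{k}{j-1}k^{k-j-1}$ before relaxing to $k^{k-2}$, at the cost of invoking a slightly less elementary input; the paper's argument is fully self-contained from classical Cayley but gives only the monotonicity statement. Both are short and clean.
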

\begin{proof}
Fix  $k\geq 2$.  By Cayley's formula, we have $F_{k,1}=k^{k-2}$.  Therefore, it suffices to prove that $F_{k,j} \ge F_{k,j+1}$ for all $j \ge 1$. 
If we take a forest with $j$ trees and erase any of its $k-j$ edges, we obtain a forest with $j+1$ trees.  And there are exactly $\sum_{s \ne t} k_s k_t$ such ways of obtaining a given forest with $j+1$ trees of sizes $k_1 \le \cdots \le k_{j+1}$.  Since
\[
\sum_{s \ne t} k_s k_t \geq k_1 (k - k_1) \geq k-1\ ,
\]
it follows that $F_{k,j}(k-j)\geq F_{k,j+1}(k-1)$. Thus, $F_{k,j} \ge F_{k,j+1}$.
\end{proof}

Starting from \eqref{Lt-bound}, and using the fact that, under $\Gamma_{S_1} \cap \Gamma_{S_2}$, $\cG_{S_1 \cap S_2}$ is a forest with $W_{S_1 \cap S_2} \le w_K$ edges, we have
\beqn 
\mathbb{E}_0\big[\tilde{L}^2\big]
&\leq& \E_0 \left(\exp\left(2 \theta W_{S_1 \cap S_2} - 2 \Lambda(\theta) K^{(2)} \right) \1_{\{\text{$\cG_{S_1 \cap S_2}$ is a forest}, \, W_{S_1 \cap S_2} \le w_K\}}\right)\ .
\eeqn 
Note that the exponential term is smaller than $1$ when $|S_1\cap S_2|\leq 1$.
Recall that $\rho = \frac{m}{N-m}$ and that $\Lambda(\theta)= \log\big[(1-p_0)/(1-p_1)\big]$.  
We derive
\begin{eqnarray}
 \mathbb{E}_0\big[\tilde{L}^2\big]-1 &\leq& \sum_{k=2}^n\sum_{i=1}^{w_k} \mathbb{P}\big[K=k,W_{S_1\cap S_2}=i , \text{$\cG_{S_1 \cap S_2}$ is a forest} \big]\exp\big[2i\theta-2\Lambda(\theta) k^{(2)} \big] \notag \\
&\leq &  \sum_{k=2}^n\sum_{i=1}^{w_k}\binom{n}{k}\rho^k F_{k,k-i}\, \frac{p_1^{2i}}{p_0^{i}}\left(\frac{1-p_0}{1-p_1}\right)^{2(i-k^{(2)})} \notag \\
&\prec&  \sum_{k=2}^n\sum_{i=1}^{w_k} \left(\frac{n^2}{N}\right)^{k-i}\left(\frac{\lambda_1^2}{\lambda_0}\right)^{i}\frac{F_{k,k-i} \,\binom{n}{k}}{n^k}\notag \\
&\prec&  \sum_{k=2}^n\sum_{i=1}^{w_k} \left(\frac{n^2e}{N}\right)^{k-i}\left(\frac{\lambda_1^2e}{\lambda_0}\right)^{i}\frac{1}{k^2} \notag \\
&\prec&  \sum_{j=1}^\infty  \left(\frac{n^2e}{N}\right)^{j} \ \sum_{i=1}^{j \lfloor  f_n\rfloor }\left(\frac{\lambda_1^2e}{\lambda_0}\right)^{i}\frac{1}{(i+j)^2} \notag \\
&\prec&  \sum_{j=1}^\infty  \left(\frac{n^2e}{N} \big[1 \vee \frac{\lambda_1^2e}{\lambda_0}\big]^{f_n} \right)^{j} \ . \label{2nd-moment-lower-b}
\end{eqnarray}
In the second inequality, we used the fact that $K$ is stochastically bounded by $\Bin(n, \rho)$ (see \lemref{hyper}).  
In the third inequality, we used the fact that $p_0 < p_1$ and $i \le w_k < k$, 
as well as the fact that $n^2 = o(N)$, which implies that $\rho^k \sim (n/N)^k$.
In the fourth inequality, we used \lemref{combinatorics_forest} and the lower bound $k! \ge (k/e)^k$.
The fifth inequality comes from a change of variables and uses the definition of $w_k$. When $\lambda_1^2 e \le \lambda_0$, since $n^2 = o(N)$, this sum is $O(n^2/N)$.
When $\lambda_1^2 e > \lambda_0$, this sum is equal to
\beq \label{2nd-moment-bound}
\frac1{e^{A_n -1} -1}, \qquad A_n := \log\left(\frac{N}{n^2}\right)- f_n\log\left(\frac{\lambda_1^2e}{\lambda_0}\right)\ .
\eeq
So it suffices to show that $A_n \to \infty$.
Since we are working under \eqref{lower-b}, there is $c > 0$ such that, eventually, 
\[
\frac{I_{\lambda_0} \log n}{I_{\lambda_1} \log N} \le \frac{1-c}{1+c} \ . 
\]
Then, using the fact that $\lambda_0\vee\lambda_1=o(1)$, we have
\beqn
f_n \log\left(\frac{\lambda_1^2e}{\lambda_0}\right) 
&=& (1+c) \frac{\log n}{I_{\lambda_1}} \big(2 \lambda_1 - 2 I_{\lambda_1} + I_{\lambda_0} - \lambda_0\big) \\
&\le& - (1+c+o(1)) \log (n^2) + (1-c) \log N \\
&\le& \log(N/n^2) - c \log(N) \ ,
\eeqn
eventually.  This implies that $A_n \ge -1 + c \log N \to \infty$.

This concludes the proof of \thmref{lower} under \eqref{lower-b}.

\subsubsection{Proof of \thmref{lower} under (\ref{lower-c})}
Recall that $\rho = n/(N-n)$ and define $k_0 = \lceil b n \rho \rceil$, where $b \rightarrow \infty$ satisfies $b^2 \zeta \to 0$.  
Let $k_{\min}$ be the integer part of $1+\frac{2}{1-\alpha}\big(1\vee \frac{n^{2-\alpha}}{N^{1-\alpha}}\big)$.
Define 
\[
\Gamma_S = \bigcap_{k=k_{\min}+1}^n\{W_T\leq w_k,\ \forall T\subset S\text{ such that }|T|=k \}\ , 
\]
where $w_k := k$ here.  

\begin{lem}\label{lem:control_gamma}
For any $k> k_{\min}$ and any subset $S$ of size $n$, we have  $\mathbb{P}_S[\Gamma_S]\to 1$.
\end{lem}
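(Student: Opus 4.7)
The plan is to bound $\P_S(\Gamma_S^{c})$ by a union bound over sizes $k \in (k_{\min}, n]$ and subsets $T \subset S$ of cardinality $k$, combined with Chernoff's inequality applied to the binomial tail of $W_T$. Under $\P_S$, for each fixed $T \subset S$ with $|T|=k$, $W_T \sim \mathrm{Bin}\!\big(\binom{k}{2}, p_1\big)$; by exchangeability,
\[
\P_S(\Gamma_S^{c}) \;\le\; \sum_{k=k_{\min}+1}^{n} \binom{n}{k}\,\P\!\left(\mathrm{Bin}\!\big(\tbinom{k}{2}, p_1\big) \,>\, k\right),
\]
and applying \lemref{chernoff} with deviation $q_k := (k+1)/\binom{k}{2} \sim 2/k$ gives the tail bound $\exp\!\big(-\binom{k}{2}\, H_{p_1}(q_k)\big)$.

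Next I would expand the entropy. Since $p_1 = \lambda_1/n \to 0$ and $q_k/p_1 \sim 2n/((k-1)\lambda_1) \to \infty$ uniformly in $k \le n$ (using $\limsup \lambda_1 < 1$), \lemref{H} yields
\[
\tbinom{k}{2}\,H_{p_1}(q_k) \;\ge\; k\log\!\left(\tfrac{2n}{(k-1)\lambda_1}\right) - k + \tbinom{k}{2}\,p_1 + o(k).
\]
Combining with $\binom{n}{k} \le (en/k)^k$, each summand of the union bound is at most
\[
\exp\!\Big( k\big[\,2 - \log 2 + \log \lambda_1 - \tfrac{(k-1)\lambda_1}{2n}\,\big] + o(k)\Big).
\]

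To conclude, I would show the sum over $k \in (k_{\min}, n]$ tends to $0$. Here the specific form of $k_{\min}$ is what makes everything work: the factor $(1-\alpha)^{-1}\big(1 \vee n^{2-\alpha}/N^{1-\alpha}\big)$, together with $\limsup \alpha < 1$ and the subleading correction $-\binom{k}{2}p_1$, is calibrated so that the Chernoff exponent becomes uniformly negative along $k > k_{\min}$, producing a geometrically decaying series.

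The main obstacle lies in the regime $\lambda_1$ close to $1$: the leading part $k(2 - \log 2 + \log \lambda_1)$ of the exponent ceases to be negative, so one must rely on the quadratic correction $-\binom{k}{2}\,p_1 \asymp -k^2 \lambda_1/n$, which only dominates once $k \gtrsim n/\lambda_1$. I would therefore split the sum at an intermediate scale, treating $k$ near $k_{\min}$ (where the lower cutoff combined with the factor $(1-\alpha)^{-1}$ is essential) separately from $k \asymp n$ (where the quadratic correction in the entropy expansion provides the decay). Carefully balancing these two contributions is the technical crux.
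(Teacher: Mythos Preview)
Your approach has a genuine gap. The union bound over subsets $T\subset S$ of a given size $k$, followed by Chernoff for each individual $W_T$, is too crude once $\lambda_1$ is close to $1$. Concretely, your own computation gives each summand bounded by
\[
\exp\!\Big(k\big[\,2-\log 2+\log\lambda_1\,\big]-\tfrac{k(k-1)\lambda_1}{2n}+o(k)\Big),
\]
and the bracket $2-\log 2+\log\lambda_1$ is \emph{strictly positive} for every fixed $\lambda_1\in(2/e^{2},1)\approx(0.27,1)$. The quadratic correction $-k(k-1)\lambda_1/(2n)$ cannot save you unless $k$ is comparable to $n$: it is $o(k)$ whenever $k=o(n)$. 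And the lower cutoff does not help either, because in the setting \eqref{lower-c} one always has $k_{\min}=o(n)$ (indeed $k_{\min}/n\asymp (n/N)^{1-\alpha}\vee n^{-1}\to 0$). So for the entire range $k\in(k_{\min},\varepsilon n)$ with $\varepsilon$ small, each summand is $\exp\big(k(2-\log 2+\log\lambda_1)+o(k)\big)$, which diverges. Your proposed split at an ``intermediate scale'' cannot work: the bad region is precisely the middle, where $k$ is large enough for $\binom{n}{k}$ to be huge but still $o(n)$ so the quadratic term is negligible.

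The paper's proof avoids the union bound entirely and is structural. It introduces the event
\[
\Gamma'_S=\{\text{no connected component of }\cG_S\text{ has more than one cycle}\},
\]
observes that $\Gamma'_S\subset\Gamma_S$ (any subgraph whose components are unicyclic or trees has at most as many edges as vertices, hence $W_T\le|T|$ for all $T\subset S$), and then shows $\P_S(\Gamma'_S)\to 1$ by a first–moment argument: the expected number of cycles with a chord and the expected number of pairs of edge-disjoint cycles joined by a path are each $O(1/n)$ when $\limsup\lambda_1<1$. This bypasses the combinatorial explosion of $\binom{n}{k}$ that kills your union bound.
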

This takes care of the first moment.  In order to conclude, it suffices to control the second moment, specifically, to prove that $\overline{\lim }\mathbb{E}\big[\tilde{L}^2\big]\leq 1$.
Arguing as before, we have 
\beqn
\mathbb{E}_0[\tilde{L}^2]
&\leq& \E\left[\IND{K \le k_0} \exp\left(\Delta K^{(2)}\right)\right] \\
&& \quad + \E\left[\IND{k_0+1 \le K \le \kmin} \exp\left(\Delta K^{(2)}\right)\right] \\ 
&& \quad \quad +  \E_0 \left[\IND{k_0+1 \le K \le k_{\min}} \exp\left(2 \theta W_{S_1 \cap S_2} - 2 \Lambda(\theta) K^{(2)} \right) \IND{W_{S_1 \cap S_2} \le w_K}\right] \\
&=& A_1 + A_2 + A_3 \ .
\eeqn

\bitem
\item Arguing exactly as we did before, we have $A_1 = 1+o(1)$.
\item Arguing as before, we also have
\beqn
A_2 &\leq& \sum_{k=k_0+1}^{k_{\min}}\exp\left[k\left(\Delta\frac{k-1}{2}-\log\left(\frac{k}{n\rho}\right)+1\right)\right]\\
&\leq &\sum_{k=k_0+1}^{k_{\min}}\exp\left[k\left( 1+o(1)+\max_{\ell\in\{k_0,k_{\min}\}} \left\{\Delta\frac{\ell-1}{2}-\log\left(\frac{\ell N}{n^2}\right)\right\}\right)\right]\ .
\eeqn
First, we have $\Delta(k_0-1)/2-\log(k_0N/n^2)\rightarrow -\infty$. 
This is true if $k_0 = 1$, and when $k_0 > 1$, we have $N/n^2 \le b$, so that 
\[\frac{(p_1-p_0)^2}{p_0(1-p_0)} \sim \frac{N^2}{n^4} \zeta = \frac{N^2}{n^4 b^2} \, b^2 \zeta \to 0\ ,\]
by definition of $b$, and therefore
\[
\Delta \frac{k_0-1}2 \asymp \frac{N^2}{n^4} \zeta \, \frac{b n^2}N \le b^2 \zeta \to 0 \ .
\]
We also have $\Delta(k_{\min}-1)/2-\log(k_{\min} N/n^2)\rightarrow -\infty$.  To show this, we divide the analysis into two cases.  When $N^{1-\alpha}\leq n^{2-\alpha}$, this results from 
\[\Delta \frac{k_{\min}-1}{2} \le (1+o(1)) \frac{n^{2-\alpha}}{(1-\alpha) N^{1-\alpha}} \frac{p_1^2}{p_0} = (1+o(1)) \frac{\lambda_1^2}{1-\alpha}=O(1) \ ,\]
together with
\beq \label{kmin-infty1}
\log\left(\frac{k_{\min}N}{n^2}\right) \ge \log\left(\frac{2N^\alpha}{(1-\alpha) n^\alpha}\right) \ge \alpha \log(N/n) \to \infty\ ,
\eeq
where we used the definition of $k_{\min}$ and the fact that $\lambda_0 = (N/n)^\alpha$.
When $N^{1-\alpha}\geq n^{2-\alpha}$, this results from
\begin{eqnarray*}
\Delta \frac{k_{\min}-1}{2} 
&\le& \frac{1}{2}\lfloor \frac{2}{1-\alpha}\rfloor  \log\left(1+\frac{p_1^2}{p_0}\right) + o(1) \\
&\leq& \frac{1}{2}\lfloor \frac{2}{1-\alpha}\rfloor \log\left[1+\lambda_1^2\frac{N^{1-\alpha}}{n^{2-\alpha}}\right] +o(1)\\
&\leq& \frac{1}{2}\lfloor \frac{2}{1-\alpha}\rfloor \log\left[(1+\lambda_1^2)\frac{N^{1-\alpha}}{n^{2-\alpha}}\right] +o(1)\\
&\leq & \frac{1}{1-\alpha}\log(1+\lambda^2_1) +o(1) + \log(N/n^2) \left\{ \begin{array}{ccc}  -\frac{\alpha}{1-\alpha}\log\left(n\right) & \text{ if } & \alpha\geq 1/3\\
                                                       - \alpha \log(N/n) & \text{ if } & \alpha<1/3
                                                      \end{array}\right.
 \ ,
\end{eqnarray*}
where in the last line, we have used the identity $\lfloor 2/(1-\alpha)\rfloor =1$ for $\alpha<1/3$.

And we also have
\beq \label{kmin-infty2}
\log\left(\frac{k_{\min}N}{n^2}\right) \ge \log\left(N/n^2\right)\ ,
\eeq
so that
\[
\Delta \frac{k_{\min}-1}{2} - \log\left(\frac{k_{\min}N}{n^2}\right) \le \frac{1}{1-\alpha}\log(1+\lambda^2_1) - \left\{ \begin{array}{ccc}  -\frac{\alpha}{1-\alpha}\log\left(n\right) & \text{ if } & \alpha\geq 1/3\\
                                                       - \alpha \log(N/n) & \text{ if } & \alpha<1/3
                                                      \end{array}\right.
\ ,
\]
which goes to $-\infty$ since $\lambda_1 = O(1)$ and $\alpha \log(N/n)=\lambda_0\to \infty$. Hence, we have $A_2=o(1)$. 

\item It remains to prove that $A_3= o(1)$. If we assume that $p_1\leq 2p_0$, then $\Delta_k\leq \Delta \leq p_0(1+o(1))$ and we can prove that $A_3=o(1)$ arguing as for $A_2$ above:
\beqn
A_3&\leq& \sum_{k=k_{\min}+1}^{n}\exp\left[k\left(\Delta\frac{k-1}{2}-\log\left(\frac{k}{n\rho}\right)+1\right)\right]\\
&\leq &\sum_{k=k_{\min}+1}^{n}\exp\left[k\left( 1+o(1)+\max_{\ell\in\{k_{\min}+1,n\}} \left\{\Delta\frac{\ell-1}{2}-\log\left(\frac{\ell N}{n^2}\right)\right\}\right)\right]\\
&\leq & \sum_{k=k_{\min}+1}^{n}\exp\left[k\left( 1+o(1)+ \Delta\frac{n}{2}-\log\left(\frac{k_{\min} N}{n^2}\right)\right)\right]\ .
\eeqn
On one hand, we have $\Delta n\prec np_0=(n/N)^{1-\alpha}=o(1)$. On the other hand, $\log(k_{\min} N/n^2) \to \infty$.  Indeed, when $N^{1-\alpha}\leq n^{2-\alpha}$, we have \eqref{kmin-infty1}; and when $N^{1-\alpha} > n^{2-\alpha}$, then $N/n^2 > n^{\alpha/(1-\alpha)} \to \infty$ and we use \eqref{kmin-infty2}.
We conclude that $A_3=o(1)$ when $p_1 \le 2 p_0$.
In the following, we suppose that $p_1\geq 2p_0$.
Leaving $w_k$ unspecified, so we can use the same arguments later, we have 
\beqn
A_3 &=  &\E_0 \left[\IND{k_0+1 \le K \le k_{\min}} \exp\left(2 \theta W_{S_1 \cap S_2} - 2 \Lambda(\theta) K^{(2)} \right) \IND{W_{S_1 \cap S_2} \le w_K}\right]\\
&=& 
\sum_{k=k_{\min}+1}^n\sum_{i=1}^{w_k } \mathbb{P}_0\left[|S_1\cap S_2|=k,W_{S_1\cap S_2}=i\right]\exp\left[2i\theta -2 k^{(2)} \Lambda(\theta)\right]\\
& \le & \sum_{k=k_{\min}+1}^n\sum_{i=1}^{w_k} \binom{n}{k}\rho^k \binom{k^{(2)}}{i} p_0^i (1-p_0)^{k^{(2)}-i} \exp\left[2i\log\left(\frac{p_1}{p_0}\right)+2(k^{(2)}-i)\log\left(\frac{1-p_1}{1-p_0}\right)\right] \\
&:=&  \sum_{k=k_{\min}+1}^n \sum_{i=1}^{w_k} B_{i,k}\ .
\eeqn
Furthermore, since $0 < 1-p_0 < 1$ and $1-p_1 < 1-p_0$, we have
\beq
B_{i,k} \leq \binom{n}{k}\rho^k \binom{k^{(2)}}{i} p_0^i (p_1/p_0)^{2i} \leq e^{o(k)} \left(\frac{en^2}{kN}\right)^k \left(\frac{e p_1^2 k^{(2)}}{p_0 i}\right)^i \label{Aik-bound}\ ,
\eeq
using the standard bound $\binom{n}{k} \le (e n/k)^k$. 

We now specify the calculations when $w_k = k$.
Considering the sums over $i=1,\ldots k/2$ and over $i=k/2+1,\ldots k$ separately, we get 
\begin{eqnarray*}
\sum_{i=1}^{k} B_{i,k}&\leq& e^{o(k)} \left(\frac{en^2}{kN}\right)^k\left[\sum_{i=1}^{\lfloor k/2\rfloor }\left(\frac{ep_1^2k^{(2)}}{p_0}\right)^{i}+ \sum_{\lfloor k/2\rfloor +1}^k \left(\frac{ep_1^2k^{(2)}}{p_0k/2}\right)^{i}\right]\\
&\le& e^{o(k)}\left(\frac{en^2}{kN}\right)^k \, k \left[1+ \left(\frac{ep_1^2k^{(2)}}{p_0}\right)^{k/2}+ \left(\frac{ep_1^2k^{(2)}}{p_0k/2}\right)^{k}\right]\\
&\prec & e^{o(k)}\left[\left(\frac{en^2}{k N}\right)^k + \left(\frac{e^{3/2}n^2p_1}{N \sqrt{2 p_0}}\right)^k+\left(\frac{e^{2}n^2p^2_1}{N p_0}\right)^k \right]\ .
\end{eqnarray*}
First, $\frac{en^2}{k N} \le \frac{en^2}{k_0 N} = o(1)$ by definition of $k_0$.
Next, $\frac{n^2p_1}{N\sqrt{p_0}} \le \frac{2 (p_1 - p_0)}{\sqrt{p_0}} \frac{n^2}N = 2 \sqrt{\zeta} \to 0$, by the fact that $p_1 \ge 2 p_0$.
Finally, $n^2p_1^2/(Np_0) = \lambda_1^2/\lambda_0 \to 0$ since $\lambda_0\rightarrow \infty$ and $\lambda_1=O(1)$. Hence, we conclude that 
\beq \label{Bsum}
\sum_{k=k_{\min}+1}^{n}\sum_{i=1}^kB_{i,k}=o(1)\ .
\eeq 
This immediately implies that $A_3 = o(1)$.
\eitem 

This concludes the proof of \thmref{lower} under \eqref{lower-c}.

\medskip
\begin{proof}[Proof of Lemma \ref{lem:control_gamma}]
Let us consider the event 
\beqn
\Gamma'_S:= \{\text{no connected component of $\mathcal{G}_S$ has more than one cycle}\} 
\eeqn
Under $\Gamma'_S$, a connected component of $\mathcal{G}_S$ has at most as many edges as vertices. Consequently, $\Gamma_S\subset \Gamma'_S$ and we only need to prove that $\P_S(\Gamma'_S)\to1$. Since $\lim\sup \lambda_1<1$ and $\P_S(\Gamma'_S)$ is nondecreasing in $\lambda_1$, we may assume that $\lambda_1$ is fixed in $(0,1)$. 

As a warmup for what follows, we note that the number $\mathbf{L}_k$ of cycles of size $k$ in $\mathcal{G}_S$ satisfies 
\begin{equation*}
\mathbb{E}_S[\mathbf{L}_k]=p_1^k \frac{n!}{(n-k)!2k}\leq \frac{\lambda_1^k}{2k} \ ,
\end{equation*}
since there are $n!/[(n-k)!2k]$ potential cycles of size $k$.
Now, if a connected component contains (at least) two cycles, there are two possibilities:
\begin{itemize}
 \item The two cycles have at least one edge in common.  In that case, there is a cycle (say of length $k$) with a chord (say of length $s<k$). Let $\mathbf{L}'_{k,s}$ denote the number of such configurations,
There are $n!/[(n-k)!2k]$ potential cycles of size $k$. Given a cycle of size $k$, there are less than $\binom{k}{2}$ starting and ending nodes possible for the chord.  Once these two nodes are set, there remains less than $n!/(n-s+1)!$ possibilities for the other nodes on the chord. Thus, we have
\begin{eqnarray*}
\mathbb{E}_S[\mathbf{L}'_{k,s}]\leq  p_1^{k+s} \frac{n!}{(n-k)!2k}\binom{k}{2}\frac{n!}{(n-s+1)!}\leq \left(\frac{\lambda_1}{n}\right)^{k+s}kn^{k+s-1}
\le \lambda_1^{k+s}\frac{k}{n} \ .
\end{eqnarray*}
Summing this inequality over $s$ and $k$, we control the expected number of cycles with a chord:
\begin{eqnarray*}
 \sum_{k=3}^{\infty}\sum_{s=1}^{k-1}\mathbb{E}[\mathbf{L}'_{k,s}]\leq \frac{1}{n}\sum_{k=3}^{\infty}\frac{k\lambda_1^{k+1}}{1-\lambda_1} \asymp \frac1n  =o(1)\ ,
\end{eqnarray*}
since $\lim\sup \lambda_1<1$. Hence, this event occurs with probability going to 0.
\item The two cycles have no edge in common.  Since there are in the same connected component, there is a path that goes from a vertex in the first cycle to a vertex in the second cycle. Let us note $\mathbf{L}'_{k_1,k_2,s}$ the number of cycles of size $k_1$ and $k_2$ that do not share an edge and are connected by a path of length $s$.  Observe that there are less $\frac{n!}{(n-k_1)!2k_1}$ possible configurations for the first cycle, less than $\frac{n!}{(n-k_2)!2k_2}$ possible configurations for the second cycle, and less than $k_1k_2 \, n!/(n-s+1)!$ possibilities for the chord. Thus, we get
\beqn
\mathbb{E}\left[\mathbf{L}'_{k_1,k_2,s}\right]&\leq& p_1^{k_1+k_2+s} \frac{n!}{(n-k_1)!2k_1}\frac{n!}{(n-k_2)!2k_2}k_1k_2\frac{n!}{(n-s+1)!}\\
&\leq & \left(\frac{\lambda_1}{n}\right)^{k_1+k_2+s} n^{k_1+k_2+s-1}=
 \frac{\lambda_1^{k_1+k_2+s}}n\ ,
\eeqn
so that the expected number of such configurations is bounded as follows
\[\sum_{k_1\ge 3} \sum_{k_2 \ge 3} \sum_{s\ge1} \mathbb{E}\left[\mathbf{L}'_{k_1,k_2,s}\right]\leq \frac{1}n \sum_{k_1\ge 3} \sum_{k_2 \ge 3} \sum_{s\ge1} \lambda_1^{k_1+k_2+s} \asymp \frac1n = o(1)\ .\]
Hence, this second event occurs with probability going two zero
\end{itemize}
All in all, we have proved that $\P_S(\Gamma'_S)\to 1$, implying that $\P_S(\Gamma_S)\to 1$.
\end{proof}

\subsubsection{Proof of \thmref{lower} under (\ref{lower-d})}
We follow the arguments laid out for the case \eqref{lower-c}.  We define $\Gamma_S$ in the same way, except that $w_k := \big[k\frac{(1-c)^{1/2}}{1-\alpha}\big]$, where $c$ is a positive constant (to be chosen small later) such that $c < \alpha$ and, eventually,
\begin{equation}
\sup_{n/u_N <k\leq n}\frac1k \E_S[W_{k,S}^*]\leq \frac{1-2c}{1-\alpha} \ .
\label{eq:condition_sup_W_kS}
\end{equation}

\begin{lem}\label{lem:control_gamma2}
For any $k> k_{\min}$ and any subset $S$ of size $n$, we have  $\mathbb{P}_S[\Gamma_S]\to 1$.
\end{lem}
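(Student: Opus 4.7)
The plan is to control $\mathbb{P}_S[\Gamma_S^c] = \mathbb{P}_S\bigl[\bigcup_{k=k_{\min}+1}^{n}\{W^*_{k,S} > w_k\}\bigr]$ by splitting the union into two subranges of $k$---the large regime $k \in (n/u_N, n]$ and the small regime $k \in (k_{\min}, n/u_N]$---each handled with a different tool, and then applying a union bound over $k$.

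For the large regime $k > n/u_N$, I would invoke the concentration inequality \eqref{Wk-ub} from \lemref{concentration_Wk} directly on $W^*_{k,S}$. The hypothesis \eqref{eq:condition_sup_W_kS} yields $\mu^*_{k,S} \leq k(1-2c)/(1-\alpha)$, so the deviation $t_k := w_k - \mu^*_{k,S}$ is at least $k\bigl[(1-c)^{1/2} - (1-2c)\bigr]/(1-\alpha) = \Theta(k)$ since $(1-c)^{1/2} > 1-2c$ for any small $c > 0$. Because $\mu^*_{k,S} = O(k)$ as well, both $t_k$ and $t_k^2/\mu^*_{k,S}$ are of order $k$, so \eqref{Wk-ub} gives $\mathbb{P}_S[W^*_{k,S} \geq w_k] \leq \exp(-c'' k)$ for some $c'' > 0$ uniform in $k$. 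Summing over the at most $n$ indices in this range produces $n \exp(-c'' n / u_N) = o(1)$ in the relevant asymptotic regime.

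For the small regime $k \in (k_{\min}, n/u_N]$, the mean $\mu^*_{k,S}$ is not tightly controlled, so I would instead union bound over the $\binom{n}{k}$ candidate subsets and apply Chernoff's inequality (\lemref{chernoff}) to each $W_T \sim \Bin(k^{(2)}, p_1)$ under $\mathbb{P}_S$. Writing $q_k := w_k/k^{(2)} \asymp 1/((1-\alpha)k)$, the constraint $k \leq n/u_N$ together with $\lambda_1 = O(1)$ forces $q_k/p_1 \to \infty$ uniformly in $k$, which legitimizes the comparison $H_{p_1}(q_k) \geq p_1 h(q_k/p_1)$ from \lemref{H}. A direct computation then gives
\[
k^{(2)} H_{p_1}(q_k) \geq \frac{k(1-c)^{1/2}}{1-\alpha}\bigl[\log(n/k) + O(1)\bigr],
\]
and combining with $\log\binom{n}{k} \leq k[\log(n/k)+1]$ produces
\[
\mathbb{P}_S[W^*_{k,S} \geq w_k] \leq \exp\bigl(-\delta\, k \log(n/k) + O(k)\bigr),
\]
where $\delta := (1-c)^{1/2}/(1-\alpha) - 1 > 0$ (positive since $c < \alpha \leq \alpha(2-\alpha)$). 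I would then split the remaining sum at $k = \sqrt{n}$: for $k \leq \sqrt{n}$, $\log(n/k) \geq (\log n)/2$ and the terms form a geometric tail of order $n^{-\delta/4}$; for $k > \sqrt{n}$, $\log(n/k) \geq \log u_N \to \infty$ and each of the at most $n$ terms is dominated by $\exp(-\sqrt{n}\,\log u_N/2)$.

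The main obstacle is bookkeeping the constant budget in the Chernoff step for small $k$: the coefficient $(1-c)^{1/2}/(1-\alpha)$ must strictly exceed $1$ to convert the subset-enumeration factor $\binom{n}{k}$ into exponential decay, while simultaneously leaving a gap above $1-2c$ for the concentration argument to work in the large regime. This is precisely why $w_k$ is built with the square-root factor $(1-c)^{1/2}$ rather than $1-c$ or $1-2c$: the threshold is positioned strictly between $1-2c$ (the Markov-type bound on $\mu^*_{k,S}$) and $1-\alpha$ (the Chernoff-threshold), and the choice $c < \alpha$ keeps both gaps positive.
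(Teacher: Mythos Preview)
Your proposal is correct and follows essentially the same approach as the paper: split the union over $k$ at $n/u_N$, use union bound plus Chernoff for small $k$, and use the concentration inequality from \lemref{concentration_Wk} for large $k$. The only noteworthy difference is that the paper, in the small-$k$ regime, passes through an intermediate threshold $\omega'_k = k(1-c)^{-1/2}\bigl(\tfrac{\lambda_1}{2}\vee 1\bigr)$ and separately verifies $\omega'_k \leq w_k$, whereas you apply Chernoff directly at level $w_k$; your direct route is slightly cleaner and your constant bookkeeping (the observation that $c<\alpha$ forces $(1-c)^{1/2}/(1-\alpha)>1$) is exactly the right justification.
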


For the second moment, we proceed exactly as in the case \eqref{lower-c}, and we start from \eqref{Bsum}.  In fact, when $w_k \le k$, the proof is complete.  So we assume that $c$ is small enough that $w_k > k$, and bound the sum over $k+1\leq i \leq w_k$.  For $i > k$, we  use the bound \eqref{Aik-bound}, together with the fact that $\lambda_0 = (N/n)^\alpha$ and $k < i$, to derive
\beqn
B_{i,k} &\le& e^{o(k)} \left(\frac{en^2}{k N}\right)^k \left(\frac{e p_1^2 k^{(2)}}{p_0 i}\right)^i \\
&\le& e^{o(k)} \left(\frac{en^2}{k N}\right)^k \left(\frac{N^{1-\alpha} k}{n^{2-\alpha}} \frac{\lambda_1^2 e}2 \right)^i \\ 
&= & e^{o(k)+k} \left(\frac{n}{N}\right)^{k-i(1-\alpha)} \left( \frac{\lambda_1^2 e}2 \right)^i \left(\frac{n}{k}\right)^{k-i}\\
&\le & e^{o(k)+k} \left(\frac{n}{N}\right)^{k-i(1-\alpha)} \left( \frac{\lambda_1^2 e}2 \right)^i \ .
\eeqn
This allows us to control the sum 
\begin{eqnarray*}
\sum_{i=k+1}^{w_k} B_{i,k}
&\leq & w_k\, e^{o(k)+k} \left(\frac{n}{ N}\right)^{k - (1-\alpha) w_k}\left(\frac{\lambda_1^2 e}{2} \vee 1 \right)^{w_k}\\
&\prec & k e^{o(k)+k} \left(\frac{n}{ N}\right)^{k(1-(1-c)^{1/2})} \left(\frac{\lambda_1^2 e}{2} \vee 1 \right)^{ k\frac{(1-c)^{1/2}}{1-\alpha}}\\
&=& \exp\left[O(k) -k(1-(1-c)^{1/2}) \log(N/n)\right] \ ,
\end{eqnarray*}
where in the second line we used the fact that $w_k = O(k)$ since $\limsup a < 1$, and in the third line we used the fact that $\lambda_1 = O(1)$.  Thus, 
$$\sum_{k=k_{\min}+1}^n \sum_{i=k+1}^{w_k} B_{i,k}=o(1) \ ,$$ 
which together with \eqref{Bsum} allows us to conclude that 
$A_3=o(1)$.

This concludes the proof of \thmref{lower} under \eqref{lower-d}.

\medskip
\begin{proof}[Proof of \lemref{control_gamma2}]
Recall that $u_N = \log\log(N/n)$.
First we consider integers $k$ satisfying $k_{\min}+1 \le k<  n/u_N$. Define $\omega'_k= k(1-c)^{-1/2}\left(\frac{\lambda_1}{2}\vee 1\right)$ and $q'_k=\omega'_k/k^{(2)}$. Applying a union bound and Chernoff's bound for the binomial distribution, we derive that 
\beqn
 \mathbb{P}_S\left[W_{k,S}^*\geq \omega'_k\right]&\leq& \binom{n}{k} \P[\Bin(k^{(2)},p_1)\geq \omega'_k]\\
&\leq& \exp\left[k\left\{\log\left(\frac{ne}{k}\right)- \frac{k-1}{2}H_{p_1}(q'_k)\right\}\right]\ .
\eeqn
Since $k/n \le 1/u_N = o(1)$, and since $\lambda_1$ is bounded, we have $q'_k/p_1\rightarrow \infty$, so that
\beqn
\frac{k-1}{2}H_{p_1}(q'_k)
&\sim& \frac{k-1}{2} \, q'_k \log \left(\frac{q'_k}{p_1}\right) \\
&=& (1-c)^{-1/2}\left[\frac{\lambda_1}{2}\vee 1\right]\left[\log\left(\frac{n}{k-1}\right)+ \log\left\{(1-c)^{-1/2}\left(1\vee \frac{2}{\lambda_1}\right)\right\} \right]\\
&\geq & (1+o(1))(1-c)^{-1/2}\log\left(\frac{n}{k}\right)\ ,
\eeqn
and therefore, since $c \in (0,1)$ is fixed,
\[\log\left(\frac{ne}{k}\right)- \frac{k-1}{2}H_{p_1}(q'_k) \le 1 + \big[1 -  (1+o(1))(1-c)^{-1/2}\big] \log(u_N) \to -\infty \ .\]
We conclude that 
\[\sum_{k=k_{\min}+1}^{n/u_N}\mathbb{P}_S\left[W_{k,S}^*\geq \omega'_k\right]= o(1) \ .\]
Let us now prove that $\omega'_k\leq w_k$.  Indeed, this inequality holds if, and only if, $\lambda_1\leq 2(1-c)/(1-\alpha)$ and $c \le \alpha$.  The second inequality is by definition of $c$, while the first inequality is ensured by \eqref{eq:condition_sup_W_kS} since
\[\frac{\lambda_1}{2}\frac{n-1}{n} = \E_S[W_{n,S}^*/n]\leq\sup_{k\leq n}\E_S[W_{k,S}^*/k]\leq (1-2c)/(1-\alpha) \ .\]

\medskip
Let us turn to integers $k$ satisfying $k\geq n/u_N$. Let $c_0=(1-c)^{-1/2}-1$ and $t=c_0\E_S[W^*_{k,S}]$.  By taking any fixed subset $T \subset S$ of size $|T| = k$, we derive 
\beq \label{Wstar-lb}
\E_S[W^*_{k,S}] \ge \E_S[W_T] = p_1 \kk \ge \frac{\lambda_1}n (n/u_N)^{(2)} \asymp \frac{n}{u_N^2} \to \infty \ ,
\eeq 
so that $t$ satisfies the condition of Lemma \ref{lem:concentration_Wk} eventually.  Using that lemma, we derive that 
\beqn
\mathbb{P}_S\left[W^*_{k,S}\geq \E_S[W^*_{k,S}](1-c)^{-1/2}\right]\leq \exp\left[-\E_S[W_{k,S}^*]\frac{\log(2)}{4}c_0\left[1\wedge \frac{c_0}{8}\right]\right]
\eeqn
By Condition \eqref{eq:condition_sup_W_kS}, $w_k\geq \E_S[W^*_{k,S}](1-c)^{-1/2}$. Hence, there exists a positive constant $\kappa$, such that
\beqn
\sum_{k=n/u_N}^n\mathbb{P}_S\left[W^*_{k,S}\geq w_k\right]&\leq& \sum_{k=n/u_N}^n \exp\left[-\kappa \E_S[W_{k,S}^*]\right]\\
&\leq & n\exp\left[-\kappa\E_S\left[W_{\frac{n}{u_N},S}^*\right]\right]
\eeqn
Because of \eqref{Wstar-lb} and the fact that $\log(N)=o(n)$, we have
\[\E_S\left[W_{\frac{n}{u_N},S}^*\right] \succ \frac{n}{\log^2(n)}\ ,\]
and therefore the sum above goes to $0$.
\end{proof}

\subsection{No test is asymptotically powerful} \label{sec:not-powerful}

When $\lambda_0$ is bounded away from 0 and infinity, the triangle test has some non-negligible power as long as $\lambda_1$ is bounded away from 0 (see \secref{triangle}).  This motivates us to obtain sufficient conditions under which no test is asymptotically powerful.  

Our method is also based on bounding the first two moments of a truncated likelihood ratio $\Lt$.  Indeed, it is enough to show that $\liminf \E_0 \Lt > 0$ and $\liminf \E_0 [\Lt^2] < \infty$.  This comes from the following result.

\begin{lem} \label{lem:contig}
Let $\P_0$ and $\P_1$ be two probability distributions on the same probability space, with densities $f_0$ and $f_1$ with respect to some dominating measure.  
Let $\Gamma$ be any event and define the truncated likelihood ratio $\tilde L = L \,\1_\Gamma$, where $L = f_1/f_0$ is the likelihood ratio for testing $\P_0$ versus $\P_1$.  Then any test for $\P_0$ versus $\P_1$ has risk at least
\[\frac4{27} \frac{(\E_0 \tilde L)^3}{\E_0 [\tilde L^2]} \ , \]
where $\E_0$ denotes the expectation under $\P_0$, and by convention $0/0 = 0$.
\end{lem}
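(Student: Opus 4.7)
The plan is to prove the lemma through a direct Bayes-risk argument combined with the Paley--Zygmund inequality applied to $\tilde L$ under $\P_0$. The strategy is to identify an event on which $\tilde L$ is comparable to its mean, and then show that no test can simultaneously avoid making errors of the first kind on a large portion of that event and errors of the second kind on the complement.

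First, I would observe that for any test $\phi$ and any event $A$,
\[
R(\phi) := \P_0(\phi=1) + \P_1(\phi=0) \ge \P_0(\phi=1,A) + \P_1(\phi=0,A).
\]
The key step is to reintroduce $\P_0$ on the second term via the likelihood ratio. Since $\tilde L \le L$ pointwise and $\tilde L \1_{\Gamma^c}=0$, a change of measure gives
\[
\P_1(\phi=0,A) = \E_0\big[L\,\1_A \1_{\phi=0}\big] \ge \E_0\big[\tilde L\,\1_A \1_{\phi=0}\big].
\]
Choose $\theta \in (0,1)$ (to be optimized) and set $A := \{\tilde L > \theta\,\E_0 \tilde L\}$. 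On $A$ one has $\tilde L > \theta\,\E_0\tilde L$, so
\[
\P_1(\phi=0,A) \ge \theta\,(\E_0\tilde L)\,\P_0(\phi=0,A).
\]
Combining the two bounds, and noting that $\E_0 \tilde L \le \E_0 L = 1$ so $\theta\,\E_0 \tilde L \le 1$, we obtain
\[
R(\phi) \ge \P_0(\phi=1,A) + \theta\,(\E_0\tilde L)\,\P_0(\phi=0,A) \ge \theta\,(\E_0\tilde L)\,\P_0(A).
\]

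Next I would apply the Paley--Zygmund inequality to the nonnegative random variable $\tilde L$ under $\P_0$: for any $\theta \in [0,1]$,
\[
\P_0\!\left(\tilde L > \theta\,\E_0\tilde L\right) \ge (1-\theta)^2 \frac{(\E_0\tilde L)^2}{\E_0[\tilde L^2]}.
\]
(The standard one-line derivation splits $\E_0\tilde L = \E_0[\tilde L\,\1_{A^c}] + \E_0[\tilde L\,\1_A]$, bounds the first summand by $\theta\,\E_0\tilde L$, and the second by Cauchy--Schwarz.) Plugging this into the previous display yields
\[
R(\phi) \ge \theta(1-\theta)^2 \,\frac{(\E_0\tilde L)^3}{\E_0[\tilde L^2]}.
\]

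Finally, I would maximize $\theta(1-\theta)^2$ over $\theta \in [0,1]$: differentiating gives the critical point $\theta = 1/3$ with maximum value $(1/3)(2/3)^2 = 4/27$, which produces the claimed constant. The degenerate case $\E_0\tilde L = 0$ is trivial since the right-hand side is then zero by the stated convention. There is no substantial obstacle here; the only subtle point is confirming that $\theta\,\E_0\tilde L \le 1$ so that the pointwise minimum over $\{\phi=0\}\cap A$ and $\{\phi=1\}\cap A$ simplifies to the single factor $\theta\,\E_0\tilde L$, which is immediate from $\E_0 L = 1$.
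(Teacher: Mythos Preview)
Your proof is correct and follows essentially the same approach as the paper: both reduce the risk bound to showing $R(\phi)\ge t\,\P_0(\tilde L>t)$, then apply the Paley--Zygmund/Cauchy--Schwarz inequality and optimize to obtain the constant $4/27$. The only minor difference is in how the intermediate inequality $R(\phi)\ge t\,\P_0(\tilde L>t)$ is obtained: the paper starts from the closed-form Bayes risk $1-\E_0(1-L)_+$ and bounds $\E_0(1-\tilde L)_+$, whereas you bound the risk of an arbitrary test directly via a change of measure on the event $A=\{\tilde L>\theta\,\E_0\tilde L\}$.
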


\begin{proof}
Assume $\E_0 \tilde L \ne 0$, for otherwise the result is immediate.  
The risk of the likelihood ratio test $\{L > 1\}$ --- which is the test that optimizes the risk --- is equal to
\[
B := 1 - \frac12 \E_0 |L - 1| = 1 - \E_0 (1 - L)_+ \ge 1 - \E_0 (1 - \tilde L)_+ \ ,
\]
since $\tilde L \le L$.
For any $t \in (0,1)$, we have
\[ \E_0 (1 - \tilde L)_+ \le (1-t) \P_0(\tilde L > t) + \P_0(\tilde L \le t) = 1 - t \P_0(\tilde L > t)\ .\]
Moreover, using the Cauchy-Schwarz inequality, we have for any $t > 0$
\beqn
\E_0 \tilde L &=& \E_0[ \tilde L \, \1_{\{\tilde L \le t\}}] + \E_0 [\tilde L \, \1_{\{\tilde L > t\}}] \\
& \le & t + \sqrt{\E_0 [\tilde L^2] \ \P_0(\tilde L > t)}\ ,
\eeqn
so that, taking $t < \E_0 \tilde L$, we have
\[
\P_0(\tilde L > t) \ge \frac{(\E_0 \tilde L - t)^2}{\E_0 \tilde L^2} \ .
\]
We conclude that
\[B \ge t \P_0(\tilde L > t) \ge t \ \frac{(\E_0 \tilde L - t)^2}{\E_0 \tilde L^2} \ ,\]
and optimizing this over $0 < t < \E_0 \tilde L$ yields the result.
\end{proof}

Since we only need to focus on the case where $\lambda_0$ is bounded from 0 and infinity, and where $\lambda_1$ is bounded from 0 (because the other cases are covered by \thmref{lower}), we may assume they are fixed without loss of generality.  In that case $\zeta \to 0$ is equivalent to $n^2/N \to 0$, which is what we assume in the following.

\begin{thm} \label{thm:lower-no}
Write $n=N^{\kappa}$ with $0<\kappa<1/2$, and assume that $\lambda_0$ and $\lambda_1$ are both fixed.
No test is asymptotically powerful in all the following situations:
\beq \label{lower-no1}
\lambda_1 < 1, \quad \lambda_1^2e \le \lambda_0 \ ;
\eeq
\beq \label{lower-no2}
\lambda_1 < 1, \quad \lambda_1^2 e > \lambda_0, \quad \frac{1-2\kappa }{\kappa }\frac{I_{\lambda_1}}{\log\left(\frac{e\lambda_1^2}{\lambda_0}\right)}>1\ .
\eeq
\end{thm}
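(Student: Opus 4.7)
The plan is to invoke \lemref{contig}, which reduces the theorem to exhibiting a decreasing event $\Gamma_S$ whose associated truncated likelihood $\tilde L$ satisfies both $\liminf \E_0[\tilde L] > 0$ and $\limsup \E_0[\tilde L^2] < \infty$. I will reuse the event introduced in the proof of \thmref{lower} under \eqref{lower-b}, namely
\[
\Gamma_S := \{\cG_S \text{ is a forest}\} \cap \{|\Cmax^S| \le f_n\}, \qquad f_n := (1+c) I_{\lambda_1}^{-1} \log n,
\]
for a small constant $c > 0$ to be chosen at the end. The overall idea is to reinterpret the first- and second-moment estimates from the proof of \thmref{lower} under \eqref{lower-b}: instead of driving them to $1$, it now suffices to show that one is bounded below and the other is bounded above.

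For the first moment, $\E_0[\tilde L] = \P_S(\Gamma_S)$ does not depend on $S$. Since $\lambda_1 \in (0,1)$ is fixed, \lemref{subcritic_cluster_upper} applied to $\cG_S \sim \bbG(n, \lambda_1/n)$ with $u_n = f_n$ (the hypothesis $u_n I_{\lambda_1}/\log n = 1+c > 1$ being satisfied) gives $\P_S(|\Cmax^S| > f_n) \to 0$. The classical Poisson convergence for short-cycle counts in a subcritical Erdős--Rényi graph yields
\[
\liminf_{n\to\infty} \P_S(\cG_S \text{ is a forest}) \geq \exp\Big(-\sum_{k\geq 3} \lambda_1^k/(2k)\Big) > 0,
\]
which I would obtain by truncating at cycle length $K$, applying joint Poisson convergence for $k \le K$, controlling cycles of length $>K$ by Markov's inequality (using that $\sum_{k>K} \lambda_1^k/(2k) \to 0$), and letting $K\to\infty$. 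Combining the two bounds, $\liminf \E_0[\tilde L] > 0$.

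For the second moment, I would rerun verbatim the derivation producing \eqref{2nd-moment-lower-b} in the proof of \thmref{lower} under \eqref{lower-b}. That computation is regime-independent: its only ingredients are $n^2/N \to 0$ (true here because $\kappa < 1/2$), the stochastic domination $|S_1\cap S_2| \lsto \Bin(n,\rho)$ from \lemref{hyper}, the forest count bound of \lemref{combinatorics_forest}, and $p_0 < p_1$ (valid eventually since $\lambda_0/\lambda_1 < N/n$). It delivers the same estimate
\[
\E_0[\tilde L^2] - 1 \prec \sum_{j \ge 1}\left(\frac{n^2 e}{N}\Big[1 \vee \frac{\lambda_1^2 e}{\lambda_0}\Big]^{f_n}\right)^{j}.
\]
Under \eqref{lower-no1} the bracket equals $1$ and the sum is $O(n^2/N) = o(1)$. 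Under \eqref{lower-no2}, substituting $n=N^\kappa$ and $f_n = (1+c)\kappa I_{\lambda_1}^{-1} \log N$ identifies the common ratio with $\exp(1-A_n)$, where
\[
A_n = \Big[(1-2\kappa) - (1+c) \kappa \frac{\log(\lambda_1^2 e/\lambda_0)}{I_{\lambda_1}}\Big]\log N;
\]
the strict inequality in \eqref{lower-no2} permits choosing $c > 0$ small enough that the bracketed coefficient is strictly positive, so $A_n \to +\infty$ and the sum tends to $0$. In either regime $\limsup \E_0[\tilde L^2] \le 1$, and \lemref{contig} then bounds the risk of every test below by $\tfrac{4}{27}(\E_0[\tilde L])^3/\E_0[\tilde L^2]$, whose $\liminf$ is strictly positive.

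The main obstacle is conceptual rather than technical: because $\lambda_1$ no longer tends to $0$ as in \eqref{lower-b}, the probability that $\cG_S$ is a forest tends only to a positive limit rather than to $1$, so the conclusion must be extracted via \lemref{contig} instead of the $\tilde L \to 1$-in-probability route used to prove \thmref{lower}. This requires the Poisson-cycle-counts input above, which is classical but must be invoked with care: the naive Markov bound $\P_S(\cG_S\text{ has a cycle}) \le \sum_{k\ge 3}\lambda_1^k/(2k)$ can exceed $1$ when $\lambda_1$ is close to $1$, so it cannot replace the sharper Poisson limit.
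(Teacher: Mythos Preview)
Your proposal is correct and follows essentially the same route as the paper: the same truncation event $\Gamma_S$ from the \eqref{lower-b} case, the same second-moment bound \eqref{2nd-moment-lower-b} with the same case split on $\lambda_1^2 e \lessgtr \lambda_0$, and the same appeal to \lemref{contig}. The only minor difference is in the first-moment step: the paper cites a result of Takacs giving Poisson convergence of the total cycle count (so $\P_S(\cG_S\text{ is a forest}) \to e^{-a(\lambda_1)}$ with $a(\lambda_1)=\sum_{k\ge 3}\lambda_1^k/(2k)$), whereas you sketch an equivalent argument via joint Poisson convergence of short cycles plus a Markov tail bound; both yield the same positive lower bound.
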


\begin{proof}[Proof of \thmref{lower-no}]

We use the same truncation as in \secref{lower-b}, using the same notation $\Gamma_S$ and $f_n$ defined there, and still denote the resulting truncated likelihood by $\tilde L$.

For the first moment, by symmetry, 
\[\E_{0}[\tilde{L}]=\mathbb{P}_S[\Gamma_S] = \P_S[\text{$\mathcal{G}_S$ is a forest}, \ |\cC_{\max, S}| \le f_n] \ .\]
We already saw that $\P_S[|\cC_{\max, S}| \le f_n] \to 1$ \citep[Th.~4.4]{remco:lecture}. 
Consequently, 
\[\E_{0}[\tilde{L}] = \P_{S}[\cG_S\text{ is a forest}] + o(1)\ .\]
Of course, $\cG_S$ is a forest if, and only if, it has no cycles.
By \cite{takacs}, the number of cycles in $\cG_S$ converges weakly to a Poisson distribution with mean 
\[a(\lambda_1)= \frac{1}{2}\log\left(\frac{1}{1-\lambda_1}\right)- \frac{\lambda_1}{2}- \frac{\lambda_1^2}{4}\ ,\]
when $\lambda_1<1$ is fixed.
As a consequence, $\E_{0}[\tilde{L}]= \exp\left[-a(\lambda_1)\right]+o(1)$, which remains bounded away from zero.

For the second moment, we start from \eqref{2nd-moment-lower-b}:
\[
 \mathbb{E}_0\left[\tilde{L}^2\right]-1 
\prec  \sum_{j=1}^\infty  \left(\frac{n^2e}{N} \big[1 \vee \frac{\lambda_1^2e}{\lambda_0}\big]^{f_n} \right)^{j} \ ,
\]
with $f_n=(1+c) I_{\lambda_1}^{-1} \log n$ and $c$ is a small positive constant. 
Under \eqref{lower-no1}, we have $\lambda_1^2 e \le \lambda_0$ and the RHS is $O(n^2/N) = o(1)$.
Under \eqref{lower-no2}, we have $\lambda_1^2 e > \lambda_0$, and the RHS is, as before, equal to \eqref{2nd-moment-bound}.  Here we have
\[A_n = \big[1-2\kappa - (1+c) \frac{\kappa}{I_{\lambda_1}} \log\left(\frac{\lambda_1^2e}{\lambda_0}\right) \big] \log N \to \infty\ , \]
when \eqref{lower-no2} is satisfied and $c$ is small enough.
Hence, in any case, we found that $\mathbb{E}_0\big[\tilde{L}^2\big] \le 1 + o(1)$.

\end{proof}

\section{Discussion} \label{sec:discussion}

\subsection{Adapting to unknown $p_0$ and $n$}

In \citep{subgragh_detection}, we discussed in detail the case where $p_0$ is unknown.  In this situation, the total degree test is not applicable, and we replaced it with a test based on the difference between two estimates for the degree variance.  On the other hand, the scan test (based on \eqref{scan-stat}) can be calibrated in various ways without asymptotic loss of power --- for example, by plugging in the estimate $\hat p_0 = \frac{W}{\NN}$ in place of $p_0$.  We showed that a combination of degree variance test and the scan test are optimal when $p_0$ is unknown, so that the degree variance test can truly play the role of the total degree test in this situation.  
We believe this is the case here also.  In addition to that, the broad scan test (based on \eqref{new-scan}) can also be calibrated without asymptotic loss of power, and the same is true for all the other tests that we studied here, except for the largest connected component test in the supercritical regime.

We also discussed in \citep{subgragh_detection} the case where the size of the subgraph $n$ is unknown.  This only truly affects the broad scan test, whose definition itself depends on $n$.  As we argued in our previous paper, it suffices to apply the procedure to all possible $n$'s, meaning, consider the multiple test based on a combination of the statistics
\[
 W_n^\ddag,\ n= 1,\dots,N/2
\] 
with a Bonferroni correction.
The concentration inequalities that we obtained for $W_n^\ddag$ can accommodate an additional logarithmic factor that comes out of applying the union to control this statistic under $\P_0$, and from this we can immediately see that the test is asymptotically as powerful (up to first order).

\subsection{Open problems}  
The cases where $\lambda_0 \to 0$ and where $\liminf \lambda_0 \ge e$ are essentially resolved.  Indeed, in the first situation, the largest connected component test is asymptotically optimal by \thmref{CC-sub} and \thmref{lower} case \eqref{lower1}, while in the second situation the broad scan test is asymptotically optimal by \thmref{broad} and \thmref{lower} cases \eqref{lower3} and \eqref{lower4}, together with \thmref{lower-no}.  
The case where $0 < \lambda_0 < e$ is fixed is not completely resolved.  Since the triangle test has non-negligible power as soon as $\lambda_1$ is bounded away from 0, consider $\tau$ defined as the largest real such that no test for $\bbG(N, \frac{\lambda_0}N)$ versus $\bbG(N,\frac{\lambda_0}N;n,\frac{\lambda_1}n)$ is asymptotically powerful when $\limsup \lambda_1 < \tau$.  Theorems~\ref{thm:CC-sub} and~\ref{thm:k-tree} provide some upper bounds on $\tau$.

\begin{open}
Compute $\tau$ as a function of $\lambda_0$ and $\kappa := \limsup \frac{\log n}{\log N}$.
\end{open}

Although we proved that the broad scan test was asymptotically optimal when $\liminf \lambda_0 \ge e$, its performance was described only indirectly in terms of $\lambda_1$ in the case \eqref{broad1}.  

\begin{open}
Compute, as a function of $\lambda_1$, the limits inferior and superior of 
\[\sup_{k=n/u_N}^n \frac{\E_S[W_{k,S}^*]}{k} \ .\]
\end{open}

We also formulate an open problem that connects directly with the planted clique problem.  We saw that the broad scan test is powerful when $\lambda_1$ is sufficiently large, but we do not know how to compute it in polynomial time.  Is there a  polynomial-time test that can come close to that?

\begin{open}
Find a polynomial-time test that is asymptotically powerful for testing $\bbG(N, p_0)$ versus $\bbG(N,p_0;n,p_1)$ when $n^2/N = O(1)$, while $\lambda_0 \to \infty$ and $\lambda_1 = O(1)$.  
\end{open}

\section{Proofs of auxiliary results} \label{sec:aux}

\subsection{Proof of \lemref{upper_bound_expectation_wk}}

Fix $\epsilon>0$ and define $x:= 2\left[(1+\epsilon)+ \sqrt{(1+\epsilon)^2+\lambda_1(1+\epsilon)}\right]$.  First, we control the deviations of $W_{k,S}^*$. 
 Define $q_k= (\lambda_1+x)/(k-1)$ and notice that $q_k \ge p_1$ for $n/u_N \le k \le n$. Since $\log(1+t)\leq t$ for any $t > -1$, we have
\[H_{p_1}(q_k):= q_k\log\left(\frac{q_k}{p_1}\right)+ (1-q_k)\log\left(\frac{1-q_k}{1-p_1}\right)\geq  q_k\log\left(\frac{q_k}{p_1}\right) -q_k+p_1 \ .\]
Applying an union bound and Chernoff inequality \eqref{chernoff}, we control the deviations of $W_{k,S}^*$: 
\[
\P_S\left[W_{k,S}^*\geq k^{(2)}q_k\right] \leq \binom{n}{k}\exp\left[-k^{(2)}H_{p_1}(q_k)\right] \le \exp[k A_k] \ ,
\]
where
\[
A_k := \log\big(\frac{en}k\big)- \frac{k-1}{2}\left(q_k\log\left(\frac{q_k}{p_1}\right) -q_k+p_1\right) \ .
\]
Observe that $x$ is larger than $2$. As a consequence, we obtain
\beqn
A_k&=& 1+\log\left(\frac{n}{k}\right)- \frac{\lambda_1+x}{2}\log\left(\frac{n(\lambda_1+x)}{(k-1)\lambda_1}\right)+ \frac{\lambda_1+x}{2}- \frac{\lambda_1(k-1)}{2n}\\
&\leq & 1 +  \frac{x}{2}-\frac{\lambda_1+x}{2}\log\left(\frac{\lambda_1+x}{\lambda_1}\right)- \frac{\lambda_1}{2}\left[\frac{k-1}{n}-1-\log\left(\frac{k-1}{n}\right)\right]\\
&\leq & 1- \frac{x^2}{4(\lambda_1+x)}\ , 
\eeqn
where we used in the last line the inequalities $t -\log t -1\geq 0$ and $\log(1-t)\leq -t-t^2/2$, valid for any $t\geq 0$.   By definition of $x$, we have $x^2/(4(\lambda_1+x))=1+\epsilon$. In conclusion, we have proved that for any integer $k$ between $n/u_N$ and  $n$
\begin{equation}\label{eq:upper_control_wks}
 \P_S\left[\frac{W_{k,S}^*}{k}\geq  \frac{\lambda_1+x}{2} \right]\leq \exp\left[-k\epsilon\right] \ . 
\end{equation}
Let us now control the lower deviations of $\frac1k W_{k,S}^*$ using Lemma \ref{lem:concentration_Wk} 
\beqn
\P_S\left[\frac{W^*_{k,S}}{k}\leq \E_S\left[\frac{W^*_{k,S}}{k}\right]- \left(\E_S\left[\frac{W^*_{k,S}}{k}\right]\right)^{1/2}\frac{8}{k^{1/2}}\right]\leq 2^{-8}\ . 
\eeqn
For $k$ large enough, $\exp\left[-k \eps\right]\leq 1/2$, which therefore implies that 
\beq\label{eq:upper_expectation_Wk1}
\E_S\left[\frac{W^*_{k,S}}{k}\right] \le \left(\E_S\left[\frac{W^*_{k,S}}{k}\right]\right)^{1/2}\frac{8}{(n/u_N)^{1/2}} + \frac{\lambda_1+x}{2} \ ,
\eeq
since $k \ge n/u_N$.
Taking the supremum over $k$ and letting $n$ go to infinity, we conclude that 
\beqn
\lim\inf  \bigvee_{k=n/u_N}^n \E_S\left[\frac{W^*_{k,S}}{k}\right]\leq  \lim\inf  \frac{\lambda_1+ x}{2}= \lim\inf  \frac{\lambda_1}{2} + (1+\epsilon)+ \sqrt{(1+\epsilon)^2+\lambda_1(1+\epsilon)}\ .
\eeqn
Then letting $\epsilon$ going to zero allows us to prove the first result. 

Now assume that $\lambda_1\to \infty$. From \eqref{eq:upper_expectation_Wk1}, we deduce that 
\[\lim\sup \lambda_1^{-1}\bigvee_{k=n/u_N}^n \E_S\left[\frac{W^*_{k,S}}{k}\right]\leq \frac{1}{2}\ .\]
On the other hand, 
\[\bigvee_{k=n/u_N}^n \E_S\left[\frac{W^*_{k,S}}{k}\right]\geq \frac{\E_S [W^*_{n,S}]}{n}= \lambda_1\frac{n-1}{2n}\sim \frac{\lambda_1}{2}\ .\]
This concludes the proof.

\subsection{Some combinatorial results} \label{sec:comb}

We state and prove some combinatorial results.

\begin{lem}[Extension of Cayley's identity]\label{lem:cayley}
 The number $T_k^{(\ell)}$ of labelled trees of size $\ell$ containing a given labelled tree of size $k$ satisfies
\[T_k^{(\ell)}= k\ell^{\ell-k-1} \ . \]
The number $T_{k_1,\ldots k_r} ^{(\ell)}$ of labelled trees of size $\ell$ containing a given labelled forest with tree components  of size $k_1,\ldots, k_r$ satisfies
\[T_{k_1,\ldots, k_r}^{(\ell)}\leq \left(\frac{k}{r}\right)^r\ell^{\ell-k+r-1} (\ell-k+r -1)^{r-1}\ ,\]
with $k=\sum_{i=1}^r k_i$.
\end{lem}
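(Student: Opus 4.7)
The plan is to derive both identities from a weighted version of Cayley's formula, which I will recall via the Prüfer bijection. For any $m \geq 2$ and positive weights $x_1, \ldots, x_m$, the Prüfer correspondence between labelled trees on $[m]$ and sequences in $[m]^{m-2}$ sends a tree $T$ to a sequence in which vertex $v$ appears $d_v(T) - 1$ times, so
\[
\sum_T \prod_{v \in [m]} x_v^{d_v(T)} \;=\; x_1 x_2 \cdots x_m \,(x_1 + \cdots + x_m)^{m-2}.
\]
This is the only ingredient the rest of the argument requires.

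For the first claim I would contract the given tree $T_0$ on $k$ labelled vertices to a single super-vertex $\star$, obtaining $m := \ell - k + 1$ labelled super-vertices (the $\ell - k$ remaining real vertices together with $\star$). A tree on $[\ell]$ containing $T_0$ is specified by (i) a labelled tree on these $m$ super-vertices, and (ii) for each edge incident to $\star$ in that tree, a choice of one of the $k$ vertices of $T_0$ to which the edge really attaches. Hence $T_k^{(\ell)} = \sum_T k^{d_\star(T)}$, and applying the weighted identity with $x_\star = k$ and the remaining weights equal to $1$ gives $T_k^{(\ell)} = k \cdot (k + m - 1)^{m-2} = k\,\ell^{\ell - k - 1}$.

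For the second claim the same contraction applied to the $r$ components of the given forest produces $m = \ell - k + r$ super-vertices with weights $k_1, \ldots, k_r$, together with $\ell - k$ external vertices of weight $1$. Since the total weight is $\sum_i k_i + (\ell - k) = \ell$, the weighted identity gives the \emph{exact} count
\[
T_{k_1,\ldots,k_r}^{(\ell)} \;=\; k_1 k_2 \cdots k_r \cdot \ell^{\ell - k + r - 2}.
\]
The AM-GM inequality $k_1 \cdots k_r \leq (k/r)^r$ and the trivial bound $\ell^{\ell - k + r - 2} \leq \ell^{\ell - k + r - 1}(\ell - k + r - 1)^{r-1}$ (valid when $\ell - k + r - 1 \geq 1$) then yield the stated inequality.

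Nothing in this plan is technically delicate: the only step that needs care is stating and using the weighted Cayley identity, and if a self-contained version is wanted it can be inserted as a one-line lemma via the Prüfer bijection. I should note that the stated bound is strictly weaker than what the contraction argument actually delivers, which suggests the authors either prefer a cleaner but cruder elementary estimate, or intend to use the slack for later applications; in either case the same reduction to super-vertices gives both forms in one stroke.
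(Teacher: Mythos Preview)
Your argument is correct and takes a genuinely different route from the paper's own proof. The paper proves both parts via a Pitman-style double counting: it counts ordered labelled trees containing the given tree (or forest) in two ways, once directly and once through an iterative edge-addition construction that roots intermediate forests and merges components one edge at a time. This requires two auxiliary bijection/surjection lemmas and yields, for the forest case, only the stated upper bound.

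Your approach is cleaner: contract each component of the given forest to a weighted super-vertex and invoke the weighted Cayley identity $\sum_T \prod_v x_v^{d_v(T)} = (\prod_v x_v)(\sum_v x_v)^{m-2}$, which you justify in one line via Pr\"ufer codes. This immediately gives the exact formula $T_{k_1,\ldots,k_r}^{(\ell)} = k_1\cdots k_r\,\ell^{\ell-k+r-2}$, from which the paper's inequality follows by AM--GM and a trivial padding factor. So you obtain a strictly stronger statement with less machinery; the paper's Pitman construction is more self-contained in the sense that it avoids citing the weighted Cayley identity, but at the cost of length and of losing the exact count in the forest case. Your observation that the paper's bound is slack (by a factor $\ell(\ell-k+r-1)^{r-1}$) is accurate and worth noting.
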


\begin{proof}
The proof relies on the double counting argument of Pitman \citep{MR2569612}. Noting $\cT$ the fixed tree of size $k$, we count in two ways the number of labelled trees of size $\ell$ that contain $\cT$ and whose vertices outside $\cT$ have been ordered. Straightforwardly, we have $T_k^{(\ell)}(\ell-k)!$ such trees. Alternatively, we  consider the following way of building such a labelled ordered tree: 
\begin{enumerate}
 \item Start from $\cT$. 
 \item Choose any vertex $\tilde{u}_0$ among the original tree $\cT$ and any vertex $\tilde{v}_0$ among the $(\ell-k)$  remaining vertices. Add an edge between $\tilde{u}_0$ and $\tilde{v}_0$. Root the given tree --- now of size $k+1$ --- at $\tilde{v}_0$.
Consider all the $\ell-k-1$ remaining vertices as rooted trees of size 1.
\item Then, perform the iterative construction of Pitman.  At each step  $i=1,\ldots, \ell-k-1$, add an edge in the following way: choose any starting vertex $u_i$ among the $\ell$ vertices and note $\rho_i$ the root of the tree containing $u_i$. Choose any ending vertex $v_i$ among the $(\ell-k-i)$ roots other than $\rho_i$.  This so-obtained tree is rooted at $\rho_i$.
\item Let $v_{\ell-k}$ denote the root of the final tree.
\end{enumerate}
All in all,  we have $k\ell^{\ell-k-1}(\ell-k)!$ such constructions and the sequence $v_1, \dots, v_{\ell-k}$ obtained in Step 3 provides an ordering for the vertices not in $\cT$. 

\begin{lem}\label{lem:bijection}
 For any labelled tree $\tilde{\cT}$ of size $\ell$ that contains $\cT$ and whose vertices outside $\cT$ have been ordered, there exists one and only one construction of $\tilde{\cT}$ based on the algorithm above.
\end{lem}
Comparing the two counts  leads to the desired result.

\begin{proof}[Proof of \lemref{bijection}]
Let us slightly modify the iterative construction of Pitman by putting an orientation on the added edges: the first edge is oriented from $\tilde{v}_0$ to $\tilde{u}_0$. For any $i=1,\ldots ,\ell-k-1$, the edge between $u_i$ and $v_i$ is oriented from $u_i$ to $v_i$. The so-obtained partially oriented tree is noted $\overrightarrow{\cT}_{u,v}$. 

Observe that except for $v_{\ell-k}$ which has no parents, all other nodes $v_i$ have one and only one parent. Also, observe that except for the edge $\tilde{v}_0\rightarrow \tilde{u}_0$, all edges between nodes in the subtree $\cT$ and nodes in $\{v_1,\ldots v_{\ell-k}\}$  leave the subtree $\cT$. By a simple induction, this leads us to the following claim: 

\medskip
\noindent 
{\bf Claim 1}: All partially oriented tree $\overrightarrow{\cT}_{u,v}$ based on Pitman construction with sequences $(u,v) = (\tilde{u}_0,\tilde{v}_0,u_1,\dots,u_{\ell-k-1},v_1,\dots,v_{\ell-k-1},v_{\ell-k})$ satisfy the following property
\beqn
(P) \quad \left\{
\begin{array}{l}
\text{Any edge in $\cT$ is undirected,}\\
\text{Any edges on the unique path between $v_{\ell-k}$ and $\cT$ is oriented towards $\cT$,}\\ 
 \text{Any other edge (not in $\cT$) is oriented in the  opposite direction to  $\cT$}.
\end{array}
\right.
\eeqn

In fact, this property characterizes the oriented partially trees $\overrightarrow{\cT}_{u,v}$. \\

\noindent 
{\bf Claim 2}: Conversely, for any sequence $v=(v_1,\ldots, v_{\ell-k})$ and any partially oriented tree $\overrightarrow{\cT}$ of size $\ell$ satisfying $(P)$,
there exists a unique sequence, $(\tilde{u}_0,\tilde{v}_0, u_1,\dots,u_{\ell-k-1})$ such that $\overrightarrow{\cT}_{u,v} = \cT$. 

{\it Proof of Claim 2}. The uniqueness is straightforward. Given $\overrightarrow{\cT}$, define $\tilde{u}_0$ as the unique child in $\cT$ and define $\tilde{v}_0$ the parent of $\tilde{u}_0$. For any $i=1,\ldots, \ell-k-1$, denote $u_i$ the parent of $v_i$. These sequences are lawful for the Pitman construction. Indeed, at step $i$, $v_i$ is not in the same connected component as $u_i$ and $v_i$ is still a root of a connected component. 

Then, the lemma proceeds from the fact that for any tree $\cT$ of size $\ell$ and any sequence $v=(v_1,\ldots,v_{\ell-k})$, there exists one and only partially orientation of $\cT$ satisfying $(P)$.
\end{proof}

\medskip
We now prove the second part of \lemref{cayley}, relying on the same double counting argument. Write $k=k_1+\ldots +k_r$.
 Noting $\cF$ the fixed forest with (labelled) connected components $\cT_1, \dots, \cT_r$ of respective sizes $k_1,\ldots, k_r$, we count in two ways the number of labelled trees of size $\ell$ that contain $\cF$ and whose vertices outside $\cF$ have been ordered. Straightforwardly, we have $T_{k_1,\ldots, k_r}^{(\ell)}(\ell-k)!$ such trees. Alternatively, we consider the following Pitman construction:
\begin{enumerate}
 \item Start from $\cF$. 
 \item For any $j=1,\dots, r$, choose any vertex $w_j \in \cT_j$. Root $\cT_j$ at $w_j$.
Consider all the $\ell-k$ remaining vertices as rooted trees of size 1.
\item Then, perform the iterative construction of Pitman: at each step  $i=1,\ldots, \ell-k+r-1$, add an edge in the following way: choose any starting vertex $u_i$ among the $\ell$ vertices and note $\rho_i$ the root of the tree containing $u_i$. Choose any ending vertex $v_i$ among the remaining $(\ell-k+r -i)$ roots  other than $\rho_i$. The resulting tree is rooted at $\rho_i$.
\item Let $v_{\ell-k+r}$ denote the root of the final tree.
\end{enumerate}
All in all, we have $\big(\prod_{j=1}^r k_j\big)\ell^{\ell-k+r-1}(\ell-k+r -1)!$ such constructions.  And the sequence $v_1, \dots, v_{\ell-k+r}$ obtained in Step 3 provides an ordering of the vertices outside $\cF$ if we ignore the $w_j$'s in that sequence.  
\begin{lem}\label{lem:surjection}
Any tree that contains $\cF$ and whose vertices outside $\cF$ are ordered by the sequence $(t_1,\ldots,t_{\ell-k})$ can be constructed in this way. 
\end{lem}
Consequently we have 
\beqn
T_{k_1,\ldots, k_r}^{(\ell)}(\ell-k)!\leq \big(\prod_{i=1}^r k_i\big) \, \ell^{\ell-k+r-1}(\ell-k+r -1)! \ ,
\eeqn
from which we derive the (crude) bound 
\[T_{k_1,\ldots, k_r}^{(\ell)}\leq \left(\frac{k}{r}\right)^r\ell^{\ell-k+r-1} (\ell-k+r -1)^{r-1}\ .\]
\end{proof}
\begin{proof}[Proof of \lemref{surjection}]
Consider a tree $\cT^{\ell}$ that contains $\cF$ and whose vertices outside $\cF$ are ordered in the following sequence $(t_1,\ldots,t_{\ell-k})$.

\medskip\noindent 
{\bf Claim.}  There exists a (non-necessarily unique) orientation of the edges outside $\cF$ such that any node in $t_1,\ldots, t_{\ell-k-1}$ has exactly one parent, $t_{\ell-k}$ has no parent, and any tree $\cT_i$ in $\cF$ has exactly one parent. 

{\it Proof of Claim 1}: Collapse each of the trees $\cT_i$ into a single node, to obtain the tree $\cT^{\ell-k+r}$ with $\ell-k+r$ nodes. Then, we prove the result for $\cT^{\ell-k+r}$ by a simple induction on the number of nodes. 

 For any $i\in \{1,\ldots ,r\}$, define $\omega_i$ as the unique node in $\cT_{i}$. 
We define the sequence $v:=(\omega_1,\ldots,\omega_r,t_1,\ldots t_{\ell-k})$. Finally, we define $u_i$ as the unique parent of $v_i$ for any $i\leq \ell-k+r-1$. It is straightforward to check that these sequences $\omega$, $v$ and $u$ are lawful for the Pitman construction and allow to build $\cF$. 
\end{proof}

\subsection{Proof of \lemref{Nkt_var0}} \label{sec:Nkt_var0}
By definition, 
\[\Var_0[\tree_k]= \sum_{C_1,C_2} \big( \mathbb{P}_0[\cG_{C_1}\text{ and } \cG_{C_2}\text{ are trees}]-  \mathbb{P}_0[\cG_{C_1}\text{ is a tree}] \mathbb{P}_0[\cG_{C_2}\text{ is a tree}] \big) \ ,\]
where the sum ranges over subsets $C_1$, $C_2$ of size $k$.

In the sequel, we let $q = |C_1 \cap C_2|$ and let $r$ denote the number of connected components of $C_1 \cap C_2$.  Note that, when $q = 0$, the corresponding terms in the sum above are zero.  When $q \ge 1$, we define 
\[B_{r,q} = \mathbb{P}_0\big[\text{$\cG_{C_1}, \cG_{C_2}$ are trees and $\cG_{C_1\cap C_2}$ has $r$ connected components}\big] \ ,\]
so that
\[\mathbb{P}_0[\cG_{C_1}\text{ and } \cG_{C_2}\text{ are trees}] = \sum_{r=1}^q B_{r,q} \ .\]
Note that $\cG_{C_1\cap C_2}$ is a forest when $\cG_{C_1}\text{ and } \cG_{C_2}\text{ are trees}$.  

We derive $B_{1,q}$ first. Under the event $\{\cG_{C_1}, \cG_{C_2}\text{ and }\cG_{C_1\cap C_2}\text{ are trees}\}$, there are exactly $2k-1-q$ edges in $\cG_{C_1}\cup\cG_{C_2}$ among the potential 
$2k^{(2)}-q^{(2)}$ edges. Let us count the number of configurations compatible with this event. By Cayley's identity, there are $q^{q-2}$  configurations for the tree $\cG_{C_1\cap C_2}$. The tree $\cG_{C_1\cap C_2}$ being fixed, we apply \lemref{cayley} to derive that there are $qk^{k-q-1}$ configurations for $\cG_{C_1}$ and $qk^{k-q-1}$ configurations for $\cG_{C_2}$. All in all, we get
\beqn
B_{1,q} &= &q^{q-2}[qk^{k-q-1}]^2p_0^{2k-1-q}(1-p_0)^{2k^{(2)}-q^{(2)}-2k+1+q}\ .
\eeqn 
Then, we upper bound $B_{r,q}$ for $r\geq 2$.  Under the event defined in $B_{r,q}$, there are $2k-2-q+r$ edges in $\cG_{C_1}\cup\cG_{C_2}$ among the potential 
$2k^{(2)}-q^{(2)}$ edges. By \lemref{combinatorics_forest} 
, there are less than $q^{q-2}$ configurations for the forest $\cG_{C_1\cap C_2}$ with $r$ connected components. $\cG_{C_1\cap C_2}$  being fixed, \lemref{cayley} tells us that there are less than $\left(\frac{q}{r}\right)^rk^{k-q+r-1} (k-q+r -1)^{r-1}$ possible configurations to complete $\cG_{C_1}$ and (independently) for $\cG_{C_2}$. It then follows that
\beqn
B_{r,q}&\leq&  q^{q-2}\left[\left(\frac{q}{r}\right)^rk^{k-q+r-1} (k-q+r -1)^{r-1}\right]^2p_0^{2k-q+r-2}(1-p_0)^{2k^{(2)}-q^{(2)}-2k+q-r+2}\\
&\leq & B_{1,q} \left(\frac{p_0}{1-p_0}\right)^{r-1} k^{6r-4}\ ,
\eeqn
using the fact that $q \le k$.
Summing over $r$ leads to 
\beqn
\sum_{r=2}^q B_{r,q}\leq B_{1,q} \, k^{2}\sum_{r=2}^q \left[\frac{p_0 k^6}{1-p_0}\right]^{r-1} \le B_{1,q} \, \frac{p_0 k^8}{1-p_0-p_0k^6} = o\big(B_{1,q}\big)\ ,
\eeqn
since $p_0k^{8}=o(1)$. 
Thus, when $|C_1 \cap C_2| = q \ge 1$, we obtain
\beqn
\mathbb{P}_0[\cG_{C_1}\text{ and } \cG_{C_2}\text{ are trees}] &=& B_{1,q} + o\big(B_{1,q}\big) \\
&\prec&  q^{q}k^{2k-2q-2}p_0^{2k-1-q}\ .
\eeqn 
 
We can now bound the variance. The number of subsets $(C_1,C_2)$ of size $k$ such that $C_1\cap C_2=q$ equals $\binom{N}{k}\binom{k}{q}\binom{N-k}{k-q}$. Thus, we derive
\begin{eqnarray*}
 \Var_0[\tree_k]&\prec& \sum_{q=1}^k \binom{N}{k}\binom{k}{q}\binom{N-k}{k-q}q^qk^{2k-2q-2}p_0^{2k-q-1}\\
&\prec& N  \sum_{q=1}^k \frac{q^qk^{2k-2q-2}}{q!(k-q)!^2}\lambda_0^{2k-2q-1}\\
&\prec& N\sum_{q=1}^k \frac{(\lambda_0 e)^k}{\lambda_0 k^2} A_{k-q}, \qquad A_\ell := \left(\frac{k \sqrt{\lambda_0 e}}{\ell}\right)^{2 \ell}\ ,
\end{eqnarray*}
by Stirling's lower bound.  By convention, $A_0 = 1$.
The function $\ell \to A_\ell$ is easily seen to be increasing over $(0, k \sqrt{\lambda_0/e})$ and decreasing over $(k \sqrt{\lambda_0/e}, \infty)$.  Thus, when $\lambda_0 < e$, we have $A_{k-q} \le A_{k \sqrt{\lambda_0/e}}$; and when $\lambda_0 > e$, we have $A_{k-q} \le A_k$; this is for all $q =1, \dots, k$.  Then summing over $q$, we obtain the stated bounds in each case.

\subsection{Proof of \lemref{alternative_tree}} \label{sec:alternative_tree}
 
First, we deal with the expectation.
\begin{eqnarray*}
\mathbb{E}_S[\tree_{k,S,q}]&= & \sum_{C_1\subset S, |C_1|=q} \ \sum_{C_2\subset S^\comp, |C_2|=k-q}\mathbb{P}_S [\cG_{C_1}\text{ and }\cG_{C_1\cup C_2}\text{ are trees}]\ .
\end{eqnarray*}
When $\cG_{C_1}$ and $\cG_{C_1\cup C_2}$ are both trees, there are $q-1$ edges in $\cG_{C_1}$ and $k-q$ additional edges in $\cG_{C_1\cup C_2}$. The number of configurations for $\cG_{C_1}$ is $q^{q-2}$ (Cayley's Identity). By  \lemref{cayley}, when $\cG_{C_1}$ is fixed, there remains $qk^{k-q-1}$ possible configurations for  $\cG_{C_1\cup C_2}$.
As for the previous variance computation, we apply to control this probability. Hence, we get
\beqn
\mathbb{P}_S [\cG_{C_1}\text{ and }\cG_{C_1\cup C_2}\text{ are trees}]
&=&q^{q-2}qk^{k-q-1}p_1^{q-1}p_0^{k-q}(1-p_1)^{q^{(2)}-q+1}(1-p_0)^{k^{(2)}-q^{(2)}-k+q} \\
&\succ& q^{q-1} k^{k-q-1} p_1^{q-1} p_0^{k-q} \ ,
\eeqn
since $(q^{(2)}-q+1) p_1 \le k^2 p_1 \asymp k^2/n = o(1)$ and $(k^{(2)}-q^{(2)}-k+q) p_0 \le k^2 p_0 \asymp k^2/N = o(1)$.  
Hence, using the fact that $nk=o(N)$ and the usual bound $m! \le \sqrt{m} (m/e)^m$, and we derive 
\beqn
\mathbb{E}_S[\tree_{k,S,q}]&\succ& \binom{n}{q}\binom{N-n}{k-q} q^{q-1} k^{k-q-1}p_1^{q-1}p_0^{k-q}
\\
& \succ & n \frac{q^{q-1}k^{k-q-1}\lambda_1^{q-1}\lambda_0^{k-q}}{q!(k-q)!} \\
&\succ &  n \frac{(e\lambda_1)^k}{\lambda_1k^{3}}\left(\frac{\lambda_0k}{\lambda_1(k-q)}\right)^{k-q}\ .
\eeqn
This quantity is maximized with respect to $q$ when $(k-q)/k=\lambda_0/(\lambda_1e)$, and taking  $ q:=k-\lfloor \frac{\lambda_0}{\lambda_1 e}k\rfloor$ leads to
\[
\mathbb{E}_S[\tilde{N}_{k,S,q}^T]\succ
 n \frac{(e\lambda_1)^k}{\lambda_1k^{3}} \exp\left(\frac{\lambda_0}{\lambda_1e}k\right) \ .
\]

Let us turn to the variance. Again, we decompose it as a sum over $(C_1,C_2)\subset S^2$ and $(C_3,C_4)\subset (S^\comp)^2$ depending on the sizes $s = |C_1\cap C_2|$ and $r = |C_3\cap C_4|$. By independence of the edges, only the subsets such $(r,s)\neq (0,0)$ play a role in the variance.  We have
\beqn
\Var_S[\tilde{N}^T_{k,S,q}]&\leq & \sum_{s=1}^{q}\sum_{r=0}^{k-q}\ \sum_{|C_1\cap C_2|=s} \ \sum_{|C_3\cap C_4|=r}\mathbb{P}_S[\cG_{C_1},\  \cG_{C_2},\ \cG_{C_1\cup C_3},\ \cG_{C_2\cup C_4}\text{ are trees} ]\\
&&+ \sum_{r=2}^{k-q} \ \sum_{|C_1\cap C_2|=0} \ \sum_{|C_3\cap C_4|=r}\mathbb{P}_S[\cG_{C_1},\  \cG_{C_2},\ \cG_{C_1\cup C_3},\ \cG_{C_2\cup C_4}\text{ are trees} ]\\ &=& B_1+B_2\ .
\eeqn 
First we consider the sum $B_1$ where $r$ is  positive.
Therefore, fix $C_1,C_2 \subset S$ and $C_3,C_4\subset S^\comp$ with $|C_1| = |C_2| = q$, $|C_3| = |C_4| = k-q$, $|C_1\cap C_2| = s \ge 1$ and $|C_3\cap C_4| = r \ge 1$, and for $1 \le t_1 \le s$ and $1 \le t_2 \le r+s$, define 
\beqn
 \cA_{(t_1,t_2)}
 &:=& \left\{\cG_{C_1},\  \cG_{C_2},\ \cG_{C_1\cup C_3},\ \cG_{C_2\cup C_4}\ \text{are trees}\right\} \\
 &&\cap \left\{\cG_{C_1\cap C_2} \text{ has $t_1$ connected components}\right\} \\
 &&\cap \left\{\cG_{(C_1\cap C_2)\cup (C_3\cap C_4)} \text{ has $t_2$ connected components}\right\}\ .
\eeqn
(The dependency of $\cA_{(t_1,t_2)}$  on $C_1, C_2, C_3, C_4$ is left implicit.)

We first control $\mathbb{P}_S[\cA_{(1,1)}]$.
Under the event $\cA_{(1,1)}$, the graph $\cG_{C_1}\cup \cG_{C_2}$ contains $2q-1-s$ edges and the graph $\cG_{C_1\cup C_3}\cup \cG_{C_2\cup C_4}$ contains $2(k-q)-r$ additional edges.
 Indeed, the number of edges in the last graph is equal to 
$$(|C_1 \cup C_3| -1) + (|C_2 \cup C_4| -1) - (|(C_1\cap C_2)\cup (C_3\cap C_4)| - 1) = k-1 + k-1 - (r+s-1) = 2k-r-s-1$$
Applying \lemref{cayley}, there are $s^{s-2}$ possible configurations for $\cG_{C_1\cap C_2}$ and then $[sq^{q-s-1}]^2$ possible configurations to complete $\cG_{C_1} \cup \cG_{C_2}$. The graph $\cG_{C_1}\cup \cG_{C_2}$ been fixed, there are $s(s+r)^{r-1}$ configurations for $\cG_{(C_1\cap C_2)\cup (C_3\cap C_4)}$, since this is a tree with $s+r$ nodes containing the given tree $\cG_{C_1\cap C_2}$ with $s$ nodes.
By the same token, $\cG_{C_1 \cup C_3}$ is a tree with $k$ nodes that includes the given tree $\cG_{C_1 \cup (C_3 \cap C_4)}$ with $q+r$ nodes, and similarly for $\cG_{C_2 \cup C_4}$, there at most $[(q+r)k^{k-q-r-1}]^2$ possible configurations to complete $\cG_{C_1\cup C_3} \cup \cG_{C_2\cup C_4}$.
Thus, we obtain
\beq
\mathbb{P}_S[\cA_{(1,1)}]
\leq   s^{s-2} [s q^{q-s-1}]^2 s(s+r)^{r-1} [(q+r)k^{k-q-r-1}]^2 p_1^{2q-1-s} p_0^{2(k-q)-r} =: A_{1,1}\ .\label{eq:upper_A11}
\eeq

Let us now control the probability of $\cA_{(t_1,t_2)}$ for $t_1$ or $t_2$ strictly larger than one. First, observe that whenever $t_2<t_1$, $\mathcal{A}_{t_1,t_2}$ is empty. Indeed, if $t_2<t_1$, there is a path in $C_3\cap C_4$ between two connected components of $\cG_{C_1\cap C_2}$. However, these two connected components are also related by a different (since $C_1 \cap C_3 = \emptyset$) path in $C_1$ (since $\cG_{C_1}$ is a tree), and that contradicts the fact that $\cG_{C_1\cup C_3}$ is a tree. Hence, we may assume that $t_2\geq t_1$.
By Lemmas \ref{lem:cayley} and  \ref{lem:combinatorics_forest}, there are at most $s^{s-2}$ possible configurations for the forest $\cG_{C_1\cap C_2}$, and when this is fixed, there are at most 
\[\big[(s/t_1)^{t_1} q^{q-s + t_1-1} (q-s+t_1-1)^{t_1-1} \big]^2  \le [sq^{q-s}q^{3(t_1-1)}]^2 \]
possible configurations to complete $\cG_{C_1} \cup \cG_{C_2}$. 
With $\cG_{C_1} \cup \cG_{C_2}$ being fixed, the number of possible configurations for $\cG_{(C_1\cap C_2)\cup (C_3\cap C_4)}$ is at most the number of trees that contain $\cG_{C_1\cap C_2}$ --- which is at most 
\[(s/t_1)^{t_1} (s+r)^{s+r-s +t_1-1} (s+r-s+t_1-1)^{t_1-1} \le s^{t_1}(s+r)^{r+2t_1-2} \]
by \lemref{cayley} --- times $k^{t_2-1}$, which bounds the number of ways of erasing $t_2-1$ edges in this tree to obtain a forest with $t_2$ components. 
The graph $\cG_{C_1\cup (C_3\cap C_4)}$ contains $t_2-t_1+1$ connected components. By \lemref{cayley}, 
 there are no more than 
\[\left[\left(\frac{q+r}{t_2-t_1+1}\right)^{t_2-t_1+1}k^{k-q-r+t_2-t_1}(k-q-r+t_2-t_1)^{t_2-t_1}\right]^2 \leq [(q+r)k^{k-r-q}k^{3(t_2-t_1)}]^2\] possible configurations to complete $\cG_{C_1\cup C_3} \cup \cG_{C_2\cup C_4}$.
 The number of edges in $\cG_{C_1}\cup \cG_{C_2}$ is $2(q-1) - (s-t_1)$, while the number of edges in $\cG_{C_1\cup C_3}\cup \cG_{C_2\cup C_4}$  is 
\[2(k-q)-(r-(t_2-t_1))= 2(k-q)-r+t_2-t_1\ .\]
  All together, and with some elementary simplifications, we arrive at the following bound
\beqn
\mathbb{P}_S[\cA_{(t_1,t_2)}]&\leq & A_{1,1} k^{7(t_2-t_1)+9t_1-6} p_1^{t_1-1}p_0^{t_2-t_1}\ .
\eeqn
Since $k^{O(1)} (p_0 + p_1) =o(1)$, it follows that 
$$\sum_{t_1=1}^{s}\sum_{t_2=1}^{r+s}\P_{S}(\cA_{t_1,t_2})\prec A_{1,1}.$$

Using the definition of $A_{1,1}$ in \eqref{eq:upper_A11} and the definition of $q$,  we bound $B_1$ 
\beqn
B_1
&\prec & \sum_{s=1}^{q} \sum_{r=0}^{k-q} \binom{n}{q}\binom{N-n}{k-q}\binom{q}{s}\binom{k-q}{r}\binom{n-q}{q-s}\binom{N-n-k+q}{k-q-r}A_{1,1}\\
&\prec& n\sum_{s,r}\frac{s^{s+1}q^{2(q-s-1)}(s+r)^{r-1}k^{2(k-q-r)-2}(q+r)^2}{r!s!(q-s)!^2(k-q-r)!^2}\lambda_1^{2q-s-1}\lambda_0^{2(k-q)-r}\\
&\prec &n \sum_{r,s} e^{2k-r-s} \left(\frac{s+r}{r}\right)^r\left(\frac{q}{q-s}\right)^{2(q-s)}\left(\frac{k}{k-q-r} \right)^{2(k-q-r)}\lambda_1^{2q-s-1}\lambda_0^{2(k-q)-r+1}\\
&\prec & \frac{n}{\lambda_1}\sum_{r,s} e^{4k-2q-3r-s } \left(\frac{s+r}{r}\right)^r\left(\frac{q}{q-s}\right)^{2(q-s)}\left(\frac{k-q}{k-q-r} \right)^{2(k-q-r)}\lambda_1^{2k-2r-s-1}\lambda_0^{r}\\
&\prec & \frac{n}{\lambda_1}\sum_{r,s} e^{4k-2q-3r} \left(\frac{q}{q-s}\right)^{2(q-s)}\left(\frac{k-q}{k-q-r} \right)^{2(k-q-r)}\lambda_1^{2k-2r-s-1}\lambda_0^{r}\ .
\eeqn
We have applied Stirling's lower bound in the fourth line; we haved used the definition of $q$ to control $k/(k-q)$  in the fifth line
\[\left(\frac{k}{k-q}\right)^{2(k-q-r)}= \left(\frac{k}{\lfloor\frac{\lambda_0k}{\lambda_1e} \rfloor }\right)^{2(k-q-r)}\leq \left(\frac{\lambda_1e}{\lambda_0}\right)^{2(k-q-r)}\left(1-\frac{\lambda_1e}{k\lambda_0}\right)^{-2k}=O(1) \left(\frac{\lambda_1e}{\lambda_0}\right)^{2(k-q-r)}\ ;\]
 and we have upper-bounded $(1+s/r)^r$ by $e^s$ in the last line. 
Note that
\[
e^{-3r}\left(\frac{k-q}{k-q-r} \right)^{2(k-q-r)}\lambda_1^{-2r}\lambda_0^{r} =  \left(\frac{(k-q)e^{3/2} \frac{\lambda_1}{\sqrt{\lambda_0}}}{k-q-r} \right)^{2(k-q-r)} \left(e^{3/2}\frac{\lambda_1}{\sqrt{\lambda_0}}\right)^{-2(k-q)}
\]
is decreasing with respect to $r$ since $\lambda_1^2 e>\lambda_0$. 
As a consequence, we have  
\beqn
B_1&\prec & \frac{nk}{\lambda_1}\sum_{\ell=1}^{q} e^{4k-2q}\lambda_1^{2k-q}D_\ell\, \quad \quad D_\ell:=\left(\frac{q}{\ell}\right)^{2\ell}\lambda_1^{\ell}
\eeqn
The function $\ell \to D_\ell$ is easily seen to be maximized at $\ell=q\sqrt{\lambda_1}/e$. This allows us to conclude that 
\beqn
B_1&\prec & \frac{nk^2}{\lambda_1} e^{4k-2q+2q\sqrt{\lambda_1}/e}\lambda_1^{2k-q}\ .
\eeqn

Finally, we bound $B_2$ following a similar strategy. First, we observe that the probability of the event $\cB:=\{\cG_{C_1},\  \cG_{C_2},\ \cG_{C_1\cup C_3},\ \cG_{C_2\cup C_4}\text{ are trees}\}$ is equivalent to the probability of the event $\cB_{1}:=\cB\cap \{\cG_{C_3\cap C_4}\text{ is a tree}\}$. This follows from the fact that the event $\cB_{r}:=\cB\cap \{ \cG_{C_3\cap C_4}\text{ contains $r$ trees}\}$ involves $r-1$ more edges  than $\cB_{1}$ while the number of possible configurations in $\cB_{r}$ is  does not increase more than by a factor $k^{O(1)r}$ compared to $\cB_{1}$.
\beqn
B_2 &= & \sum_{r=2}^{k-q} \ \sum_{|C_1\cap C_2|=0} \ \sum_{|C_3\cap C_4|=r} \mathbb{P}_S[\cG_{C_1},\  \cG_{C_2},\ \cG_{C_1\cup C_3},\ \cG_{C_2\cup C_4}\text{ are trees} ]\\
& \prec& \sum_{r=2}^{k-q} \ \sum_{|C_1\cap C_2|=0} \ \sum_{|C_3\cap C_4|=r} \mathbb{P}_S[\cG_{C_1},\  \cG_{C_2},\ \cG_{C_1\cup C_3},\ \cG_{C_2\cup C_4}\ ,\text{ and } \cG_{C_3\cap C_4}\text{ are trees} ]
\\&\prec& \sum_{r=2}^{k-q} r^{r-2}\left(q^{q-2}\right)^2 \left(\left(\frac{q+r}{2}\right)^{2} k^{k-q-r+1}(k-q-r+1)\right)^2 p_1^{2(q-1)}p_0^{2(k-q)-r+1}\\ &&\times \binom{n}{q}^2\binom{N-n}{r}\binom{N-n}{k-q-r}^2\\
&\prec & \sum_{r=2}^{k-q}\frac{n^2}{N}r^2k^4\lambda_1^{2(q-1)}\lambda_0^{2(k-q)-r+1}e^{2k-r}\left(\frac{k}{k-q-r}\right)^{2(k-q-r)}\\
&\prec & k^6\sum_{r=2}^{k-q}\frac{n^2}{N}\lambda_1^{2k-2r-2 }\lambda_0^{r+1}e^{4k-2q-3r}\left(\frac{k-q}{k-q-r}\right)^{2(k-q-r)}\\
&\prec &\frac{k^7n^2}{N}\lambda_1^{2k-2 }\lambda_0e^{4k-2q}\ .
\eeqn
In the third line, we bound the probability by counting the number of edges involved in the event and the number of possible configurations, as we did before.
 In the fourth line, we use the  bound of  $k/(k-q)$ to obtain a ratio of the form $\frac{k-q}{k-q-r}$. In the last line, we observe that the sum is decreasing with respect to $r$ and is maximized at $r=0$.

\subsection*{Acknowledgements}

We would like to thank Jacques Verstraete and Raphael Yuster for helpful discussions and references on counting $k$-cycles.   
The research of N.~Verzelen is partly supported by the french Agence Nationale
de la Recherche (ANR 2011 BS01 010 01 projet Calibration).
The research of E.~Arias-Castro is partially supported by a grant from the Office of Naval Research (N00014-13-1-0257).

\bibliographystyle{chicago}
\bibliography{subgraph-detection}

\end{document}